\newcommand\characx{\mathbf{x}}
\newcommand{\key}{\kappa}
\newtheorem{theorem}{Theorem}
\newtheorem{remark}{Remark}
\newcommand\bremark{\begin{remark}\begin{upshape}}
\newcommand\eremark{\end{upshape}\end{remark}}
\newtheorem{proposition}{Proposition}
\newtheorem{corollary}{Corollary}
\newtheorem{lemma}{Lemma}
\newtheorem{definition}{Definition}
\newtheorem{example}{Example}
\DeclareMathOperator{\Tab}{Tab}
\DeclareMathOperator{\Mat}{Mat}
\DeclareMathOperator{\SVT}{SVT}
\DeclareMathOperator{\RPP}{RPP}
\DeclareMathOperator{\ceq}{ceq}
\DeclareMathOperator{\wt}{wt}
\DeclareMathOperator{\ex}{ex}
\DeclareMathOperator{\Read}{read}
\DeclareMathOperator{\rect}{rect}
\DeclareMathOperator{\sh}{shape}
\DeclareMathOperator{\SSDT}{SSDT}
\title[Key expansion of the flagged refined skew stable Grothendieck polynomial]{Key expansion of the flagged refined skew stable Grothendieck polynomial}
\author[]{Siddheswar Kundu}
\address{School of Mathematics, tata institute of fundamental research, mumbai, india}
\address{School of Mathematical Sciences, National Institute of Science Education and Research, Bhubaneswar, HBNI, P.O. Jatni, Khurda, Odisha, 752050, India
}
\email{kundusidhu96@gmail.com}
\keywords{Set-valued tableaux, Grothendieck polynomials, key polynomials, Demazure crystals}
\subjclass{05E05}
\begin{document}
\begin{abstract}
The flagged refined stable Grothendieck polynomials of skew shapes generalize several polynomials like stable Grothendieck polynomials, flagged skew Schur polynomials. In this paper, we provide a combinatorial expansion of the flagged refined skew stable Grothendieck polynomial in terms of key polynomials. We present this expansion by imposing a Demazure crystal structure on the set of flagged semi-standard set-valued tableaux of a given skew shape and a flag. We also provide expansions of the row-refined stable Grothendieck polynomials, the refined dual stable Grothendieck polynomials and the Schur P-functions in terms of stable Grothendieck polynomials $G_{\lambda}$ and in terms of dual stable Grothendieck polynomials $g_{\lambda}$.
\end{abstract}
\maketitle
\section{Introduction}
The Grassmannian $Gr(n,k)$ denotes the set of $k$-dimensional hyperplanes that lie within $\mathbb{C}^{n}$. Lascoux and Schützenberger \cite{Lascoux:G1,Lascoux:G2} introduced Grothendieck polynomials to describe the K-theory ring of the Grassmannian. Grothendieck polynomials can be viewed as a K-theory analogue of Schubert polynomials. Similar to the Schubert polynomial scenario, they are indexed by permutations in the symmetric group $S_n ,$ and by considering the stable limit of $n \rightarrow \infty ,$ Grothendieck polynomials are symmetric functions. Fomin and Kirillov \cite{Fomin:Yang-Baxter} commenced the study of $\beta$-Grothendieck polynomials with a parameter $\beta$, which transform into Schubert polynomials at $\beta=0$ and Grothendieck polynomials at $\beta=-1$. They also explored the stable Grothendieck polynomials with the parameter $\beta$.

For a partition $\lambda ,$ we denote the stable Grothendieck polynomials corresponding to the Grassmannian permutation $\pi_{\lambda}$ by $G_{\lambda}(\characx)$ (see \cite[\S2]{Buch:K-LR} for more details about $\pi_{\lambda}$). $G_{\lambda}$ can be regarded as a K-theory analogue of the Schur functions $s_{\lambda}$. $\{G_{\lambda}(\characx)\}$ indexed by partitions is a basis for (a completion of) the space of symmetric functions, see \cite{Lenart:Comb-aspects}. Buch \cite{Buch:K-LR} demonstrated that the stable Grothendieck polynomial $G_{\lambda}(\characx)$ is equal to a generating function for semi-standard set-valued tableaux of shape $\lambda$, which are a generalization of semi-standard Young tableaux. Following Buch's contributions \cite{Buch:K-LR}, many generalizations of $G_{\lambda}(\characx)$ have been thoroughly investigated from various perspectives. For example, Chan and Pflueger introduced the notion of the row-refined skew stable Grothendieck polynomial $RG_{\lambda/\mu}(\characx;\mathbf{t})$ based on the excess statistic in \cite[\S3]{Melody-Nathan}. In this paper, we study the flagged refined skew stable Grothendieck polynomial $G_{\lambda/\mu}(X_{\Phi}; \mathbf{t})$ (see \S\ref{Section 2} for the defintion), which is a common generalization of $RG_{\lambda/\mu}(\characx;\mathbf{t})$ and the flagged skew Schur polynomial $s_{\lambda/\mu}(X_{\Phi})$ in \cite{RS}. The flagged skew Grothendieck polynomials are also studied by Matsumura \cite{Matsumura:flagged}, see also \cite{Knutson-Miller-Yong}. 

Reiner and Shimozono \cite{RS} have given an expansion of the flagged skew Schur polynomials as a non-negative integral linear combinations of key polynomials (Demazure characters). A crystal theoretic statement of this result is proved in \cite[Theorem 3.11 \& Appendix]{KRSV}, namely, the set of flagged semi-standard Young tableaux is a disjoint union of Demazure crystals, which is further extended by the author \cite[Theorem 1]{Sidhu} by providing a Demazure crystal structure on the set of flagged reverse plane partitions. In this project, we provide a Demazure crystal structure on the set of all flagged semi-standard set-valued tableaux, denoted by $\SVT(\lambda/\mu,\Phi)$ in \S\ref{Section 2}, for a skew shape $\lambda/\mu$ and a flag $\Phi$. In order to do this, we prove that (in Theorem~\ref{theorem:main}), given a skew shape $\lambda/\mu$ and a flag $\Phi$, $\SVT_{\mathbf{e}}(\lambda/\mu, \Phi)$, i.e., the set of all flagged set-valued tableaux in $\SVT(\lambda/\mu, \Phi)$ with excess $\mathbf{e}$, admits a Demazure crystal structure. This theorem also generalizes that the set of flagged semi-standard Young tableaux is a disjoint union of Demazure crystals, see Corollary~\ref{corollary:SVT-tableau}. As a consequence, we obtain an expansion of $G_{\lambda/\mu}(X_{\Phi}; \mathbf{t})$ in terms of key polynomials (Corollary~\ref{corollary:main}) and an expansion of $RG_{\lambda/\mu}(\characx;\mathbf{t})$ in terms of Schur polynomials (Corollary~\ref{corollary:schur-SVT}).

The paper is organized as follows. In \S\ref{Section 2}, we review the definitions of semi-standard set-valued tableaux, key polynomials, the Burge correspondence and define the flagged refined skew stable Grothendieck polynomials. In \S\ref{Section 3}, we define the crystal structure on semi-standard set-valued tableaux of skew shape, for partition shape the crystal structure is defined in \cite{Travis:symmetric}. In \S\ref{Section 4}, we prove our main result (Theorem~\ref{theorem:main}, Remark~\ref{remark:main}), namely $\SVT(\lambda/\mu,\Phi)$ admits a Demazure crystal structure. In \S\ref{Section 5}, we provide expansions of the refined dual stable Grothendieck polynomials in terms of stable Grothendieck polynomials and in terms of dual stable Grothendieck polynomials and similarly for the row-refined skew stable Grothendieck polynomials and the Schur P-functions.
\section{Preliminaries}
\label{Section 2}
In this section, we review the definition of semi-standard set-valued tableaux. Then we define the flagged refined skew stable Grothendieck polynomials. We also review key polynomials, the Burge correspondence. 
\subsection{Partitions and skew Shapes}
A \emph{partition} is defined as a finite sequence of non-negative integers $\lambda=(\lambda_1,\lambda_2,\ldots)$ with $\lambda_1 \geq \lambda_2 \geq \cdots $. A \emph{hook partition} is a partition of the form $(a+1,1^{b}),$ where $a,b$ are non-negative integers. For $n \in \mathbb{N},$ $\mathcal{P}[n]$ denotes the set of all partitions with at most $n$ parts. The \emph{Young diagram} associated with the partition $\lambda$ is a collection of boxes that are top and left justified, where the $i^{th}$ row consists of $\lambda_i$ boxes. By abusing of notation, we denote the Young diagram of $\lambda$ once more by $\lambda$. Given two partitions $\lambda, \mu,$ we write $ \mu \subset \lambda$ if $ \mu_i \leq \lambda_i$ $\forall i \geq 1$. For partitions $\mu,\lambda$ such that $\mu \subset \lambda $, the \emph{skew shape} $\lambda / \mu$ is obtained by deleting the boxes of $\mu$ from those of $\lambda$. For example, see Figure~\ref{Figure 1}.
\begin{figure}
    \centering
\ytableausetup{nosmalltableaux}
\begin{ytableau} 
\none & \none & \null \\
 \none & \null & \null \\
\null & \null \\
\end{ytableau}
\caption{Skew shape $ (3,3,2)/(2,1)$}
\label{Figure 1}
\end{figure}
\subsection{Semi-standard set-valued tableaux}
Given two subsets $A,B$ of $\mathbb{N},$ we say $A \leq B$ if $\max(A) \leq \min(B)$ and $A < B$ if $\max(A) < \min(B)$.
A \emph{semi-standard set-valued tableau} \cite{Buch:K-LR} of shape $\lambda / \mu$ is a filling of the boxes of the shape $\lambda/\mu$ by non-empty subsets of $\mathbb{N}$ such that the rows are weakly increasing from left to right and the columns are strictly increasing from top to bottom. Note that a semi-standard Young tableau of shape $\lambda / \mu$ is a semi-standard set-valued tableau of the same shape where each box of the shape $\lambda/\mu$ is filled by a positive integer. When we write a set-valued tableau (SVT), we refer to a semi-standard set-valued tableau.

Let $\SVT_n(\lambda/\mu)$ $(\lambda,\mu \in \mathcal{P}[n])$ denote the set of all semi-standard set-valued tableaux of shape $\lambda/\mu$ with entries at most $n$. By $\Tab_n(\lambda/\mu)$, we mean the set of all semi-standard Young tableaux in $\SVT_n(\lambda/\mu)$. For $ S \in \SVT_n(\lambda/\mu)$, the \emph{weight} of $S$ is defined by $\wt(S):=(s_1,s_2,\ldots ,s_n),$ where $s_i$ is the number of occurences of $i$ in $S$ and the \emph{excess} of $S$ is $\ex(S):=(s'_1,s'_2,\ldots,s'_n),$ where $s'_i$ is the number of entries in $i^{th}$ row minus the number of boxes in that same row.
For example, see Figure~\ref{Figure 2}. 

\begin{figure}
     \centering
\begin{tikzpicture}[scale=0.95]
    \draw (0,0) -- (0,2) -- (3, 2) -- (3, 4) -- (4,4);
    \draw (4,4) -- (4,3) -- (1, 3) -- (1, 0) -- (0,0);
    \draw (0, 1) -- (2, 1) -- (2, 4) -- (3, 4);
\draw (0.5, 0.5) node {$3$};
\draw (0.5, 1.535) node {$1$};
\draw (1.5, 1.5) node {$2,3,4$};
\draw (1.49, 2.55) node {$1$};
\draw (2.5, 2.5) node {$3,4$};
\draw (2.5, 3.5) node {$1,2$};
\draw (3.55, 3.55) node {$2,3$};
\end{tikzpicture}
\caption{A semi-standard set-valued tableau of shape $(4,3,2,1)/(2,1)$, weight $(3,3,4,2)$,
excess $(2,1,2,0)$.}
     \label{Figure 2}
\end{figure}
\subsection{The flagged refined skew stable Grothendieck polynomials}
A flag $\Phi = (\Phi_1, \Phi_2, \ldots)$ is defined as a finite weakly increasing sequence of positive integers. For $n \in \mathbb{N},$ $\mathcal{F}[n] $ denotes the set of all flags $\Phi = (\Phi _1, \Phi _2, \ldots, \Phi_n)$ such that $\Phi_n = n$. Let $\Phi \in \mathcal{F}[n]$. Then we say a semi-standard set-valued tableau $T$ of shape $\lambda/\mu$ $(\lambda,\mu \in \mathcal{P}[n])$ respects flag $\Phi$ if the entries in $i^{th}$ row of $T$ is at most $\Phi_i$ for all $ 1 \leq i \leq n$. $ \SVT(\lambda/\mu,\Phi)$ denotes the set of all semi-standard set-valued tableaux of shape $\lambda/\mu$ that respects the flag $\Phi$. We define the \emph{flagged refined skew stable Grothendieck polynomial} $G_{\lambda/\mu}(X_\Phi; \mathbf{t})$ by 
$$ G_{\lambda/\mu}(X_\Phi; \mathbf{t}):= \displaystyle \sum _{T \in \SVT(\lambda/\mu,\Phi)} (-1)^{|\ex(T)|}\mathbf{t}^{\ex(T)}\characx^{\wt(T)} , $$ where
for $\alpha =(\alpha_1,\alpha_2,\ldots, \alpha_n) \in \mathbb{Z}^n _{+} ,\text{ we let }  \characx^{\alpha}=x_1^{\alpha_1}x_2^{\alpha_2}\cdots x_n^{\alpha_n},$ $ \mathbf{t}^{\alpha}=t_1^{\alpha_1}t_2^{\alpha_2}\cdots t_n^{\alpha_n}$ and $|\alpha|=\alpha_1+\alpha_2+\cdots+\alpha_n$ ($\mathbb{Z}^n _{+}$ is the set of all $n$-tuples of non-negative integers).

Let $\mathbf{1}=(1,1,\ldots,1), \mathbf{0}=(0,0,\ldots,0) \in \mathbb{Z}^n _{+}$. Then  
\begin{enumerate}
    \item At $t_1=\cdots=t_n=-\beta,$ $ G_{\lambda/\mu}(X_\Phi; \mathbf{t})$ reduces to the flagged skew Grothendieck polynomial $G_{\lambda/\mu,\mathbf{1}/\Phi}(x)$ (see \cite[\S4]{Matsumura:flagged}).
    \item $G_{\lambda/\mu}(X_\Phi; \mathbf{0}) $ is the flagged skew Schur polynomial 
    $s_{\lambda/\mu}(X_\Phi)$ in \cite{RS}, defined by, $$s_{\lambda/\mu}(X_\Phi):=\displaystyle\sum_{T \in \Tab(\lambda/\mu,\Phi)}\characx^{\wt(T)},$$ where $\Tab(\lambda/\mu,\Phi)$ is the set of all semi-standard Young tableaux in $\SVT(\lambda/\mu,\Phi)$.
    \item If $\Phi=(n,n,\ldots, n)$ then
\begin{itemize}
 \item $G_{\lambda/\mu}(X_\Phi; \mathbf{t})$ reduces to the row-refined skew stable Grothendieck polynomial $RG_{\lambda/\mu}(\characx;\mathbf{t}),$ see \cite[\S3]{Melody-Nathan}.
 \item $G_{\lambda/\mu}(X_\Phi; \mathbf{1})= G_{\lambda/\mu}(x_1,x_2,\ldots,x_n),$ the single stable Grothendieck polynomial (\cite[\S3]{Buch:K-LR}).
 \item $G_{\lambda/\mu}(X_\Phi; \mathbf{0})$ is the skew Schur polynomial $s_{\lambda/\mu}(\characx)
$.
\end{itemize} 
\end{enumerate}
\subsection{Key polynomials}Consider the polynomial ring $\mathbb{Z}[x_1,x_2,\dots,x_n]$. Then for $1 \leq i \leq n-1,$ the Demazure operators $T_i :\mathbb{Z}[x_1,x_2,\dots,x_n] \rightarrow \mathbb{Z}[x_1,x_2,\dots,x_n]$ 
are defined by: 
$$ T_i(f):= \frac{x_i\, f - \, x_{i+1}s_i(f)}{x_i - x_{i+1}},$$
where $s_i$ acts on $f$ by interchanging $x_i$ and $x_{i+1}$.    

Given a permutation $w \in S_n$, we define $$T_w :=  T_{i_1}T_{i_2} \cdots T_{i_k},$$ where $s_{i_1}s_{i_2} \cdots s_{i_k}$ is a reduced expression of $w$. Since $T_i$ satisfies the braid relations, $T_w$ does not rely on the reduced expression.

For any $\alpha \in \mathbb{Z}^n _{+} ,$ the \emph{key polynomial} is defined by $\key_{\alpha}:=T_w(\characx^{\alpha^{\dagger}}),$ where $\alpha^{\dagger}$ is the partition formed by sorting the parts of $\alpha$ into decreasing order and $w$ is any permutation in $S_n$ such that $w.\alpha^{\dagger}=\alpha$. Here, $w$ acts on $\alpha^{\dagger}$ by the usual left action of $S_n$ on $n$-tuples.

\subsection{The Burge correspondence}We write $[n]$ to denote the set $\{1,2,\dots,n\}$. Given $m,n \in \mathbb{N},$ $\Mat_{m \times n}(\mathbb{Z}_{+})$ is the set of all $m \times n$ matrices having non-negative integer entries. We associate a matrix $A=(a_{ij}) \in \Mat_{m \times n}(\mathbb{Z}_{+})$ to a biword $w_A$ as follows:
$$ w_{A}=\begin{bmatrix}         
i_t\hspace{0.2cm} \cdots \hspace{0.2cm}i_2 \hspace{0.2cm} i_1 \\
j_t \hspace{0.2cm} \cdots \hspace{0.2cm} j_2 \hspace{0.2cm}j_1 \\
           
\end{bmatrix}$$
so that for any pair $(i,j) \in [m] \times [n],$ there are $a_{ij}$ columns in $w_A$ equal to $
\begin{bmatrix}
   i \\
   j
\end{bmatrix}
$ and those are ordered as follows.
\begin{itemize}
    \item $i_t \geq \cdots \geq i_2 \geq i_1 \geq 1 $.
    \item $ i_{k+1} > i_k$ whenever $ j_{k+1}>j_k$.
\end{itemize}
\begin{example}
$ \text{Take} A=
\begin{pmatrix}
1 & 2 & 0\\
2 & 1 & 3
\end{pmatrix}
,$ then $w_A =
\begin{bmatrix}
2 & 2 & 2 & 2 & 2 & 2 & 1 & 1 & 1 \\
1 & 1 & 2 & 3 & 3 & 3 & 1 & 2 & 2 
\end{bmatrix}.$   
\end{example}
\begin{theorem}
\begin{upshape}
\cite[Appendix A, Proposition 2]{Fulton:yt} 
\end{upshape}
\label{Theorem:Burge}
The Burge correspondence gives a bijection between $\Mat_{m \times n}(\mathbb{Z}_{+})$ and the set of pairs $(P, Q)$, where $P, Q$ are both semi-standard Young tableaux of same shape and entries of $P, Q$ are in $[n],[m]$ respectively. We write $(w_A \rightarrow \emptyset)=(P, Q)$ if $A$ corresponds to $(P, Q)$. 
\end{theorem}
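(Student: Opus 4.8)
\section*{Proof proposal}

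The statement is the classical Burge variant of the Robinson--Schensted--Knuth correspondence, so the plan is to exhibit an explicit insertion map together with an explicit reverse map and to verify that the two are mutually inverse. First I would fix a single-letter insertion rule $P \leftarrow j$ on semi-standard Young tableaux with entries in $[n]$: starting in the top row, bump out the entry that first violates semistandardness with $j$ (under the Burge bumping convention, where ties among equal entries are resolved compatibly with the weakly decreasing order of the $j$'s inside a fixed top letter), let the bumped entry cascade into the next row, and terminate when a new box is appended. The forward map then reads the columns of $w_A$ in the prescribed order, inserts the bottom entries $j_1, j_2, \ldots$ successively into $P$, and records in $Q$, in the box newly created at each step, the corresponding top entry $i_k$.

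Second, I would establish the two structural facts that make $(P,Q)$ a legitimate output. That $P$ remains semi-standard after each insertion is the standard bumping lemma for semi-standard tableaux. The substantive point is that $Q$ is semi-standard, and this is exactly where the two ordering hypotheses on $w_A$ enter. Using a bumping lemma that tracks the relative horizontal and vertical positions of the new boxes produced by two consecutive insertions, I would show that equal top letters $i_k = i_{k+1}$ create boxes in weakly increasing columns while strictly larger top letters create boxes strictly lower; this yields weakly increasing rows and strictly increasing columns in $Q$. The condition ``$i_{k+1} > i_k$ whenever $j_{k+1} > j_k$'' is precisely what forces the column-strictness of $Q$.

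Third, I would construct the inverse. Given a pair $(P,Q)$ of the same shape with entries in $[n]$ and $[m]$ respectively, I would repeatedly locate the box of $Q$ carrying the largest label, break ties by the reading order dictated by the Burge convention, reverse-bump the corresponding entry out of $P$ to recover one bottom letter $j$, and pair it with the top letter $i$ read from $Q$; deleting that box and recursing produces a biword. I would then check that this biword satisfies the two bullet conditions, so that it is genuinely of the form $w_A$ for a unique matrix $A \in \Mat_{m \times n}(\mathbb{Z}_{+})$, and that the forward and reverse procedures undo each other step by step, giving the claimed bijection.

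The main obstacle is the bookkeeping around equalities: proving the bumping lemma with the correct mix of strict and weak inequalities for the Burge convention, and verifying that the reverse algorithm reconstructs the columns of $w_A$ in exactly the canonical order specified by the two ordering conditions. Keeping these tie-breaking rules consistent between the two directions, so that the maps are honest inverses rather than merely mutually surjective, is the delicate part; once the bumping lemma is in place, both the semistandardness of $Q$ and the correct ordering of the recovered biword follow.
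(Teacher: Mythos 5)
Your overall architecture (single-letter insertion, a bumping lemma controlling the relative position of consecutive new boxes, recording tableau, reverse bumping for the inverse) is the standard one, and it is essentially how the cited source proceeds --- note that the paper itself offers no proof of this theorem, only the reference to Fulton's Appendix A. However, your proposal has a genuine error: you build the forward map from \emph{row} insertion (``starting in the top row, \ldots let the bumped entry cascade into the next row''), and row bumping is incompatible with the ordering of $w_A$ that defines the Burge correspondence. The two displayed conditions force the bottom letters to be \emph{weakly decreasing}, in processing order $k=1,2,\ldots,t$, within each block of equal top letters. The Burge correspondence therefore \emph{column}-inserts the bottom letters: the incoming letter bumps the topmost entry $\geq j$ of the first column, and the bumped entry cascades into the next column. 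Under column insertion, a weakly decreasing block of insertions produces new boxes forming a horizontal strip, which is exactly what makes $Q$ semistandard. Under row insertion (with either the strict or the weak bumping convention) a weakly decreasing block produces boxes that stack vertically, so your key claim that ``equal top letters $i_k=i_{k+1}$ create boxes in weakly increasing columns'' is false for this ordering; it is true for the lexicographic (RSK) ordering, which is not the one in the statement. A minimal counterexample: $A=\begin{pmatrix}1&2\end{pmatrix}\in\Mat_{1\times 2}(\mathbb{Z}_{+})$, so $w_A=\begin{bmatrix}1&1&1\\1&2&2\end{bmatrix}$ and the bottom letters are inserted in the order $2,2,1$. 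Schensted row insertion creates the boxes $(1,1),(1,2),(2,1)$, so $Q$ would contain the entry $1$ twice in column $1$ and is not semistandard (the weak convention is worse, giving three boxes in column $1$); column insertion gives $P$ equal to the single row $1\,2\,2$ and $Q$ equal to the single row $1\,1\,1$, both semistandard, as the theorem requires.

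The error is not a mere convention mismatch that washes out: on the paper's own later computation with $\mathbf{b}(\sigma_2)=3221$ and $r_{\widetilde{S_2}}=2121$, column insertion reproduces the stated output of shape $(2,2)$, whereas row insertion produces a pair of shape $(3,1)$, so your map would not even agree with the correspondence being asserted. The fix is local: replace your insertion rule by Burge column bumping, prove the column-insertion bumping lemma (within a block of equal top letters the new boxes move strictly to the right; when the top letter strictly increases the new box moves strictly down, which is what the hypothesis ``$i_{k+1}>i_k$ whenever $j_{k+1}>j_k$'' feeds into), and then the remainder of your outline --- semistandardness of $Q$, reverse bumping from the largest entry of $Q$ with last-in-first-out tie-breaking among equal entries, and verification that the recovered biword satisfies the two ordering bullets --- goes through as the standard argument.
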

\section{Crystal structure on set-valued tableaux}
\label{Section 3}
In this section, we recall the notion of crystals of type $A_{n-1}$ and define an crystal structure on set-valued tableaux of skew shape extending the crystal structure for partition shape, given by Monical-Pechenik-Scrimshaw \cite[\S3]{Travis:symmetric}.   
\subsection{Crystals}A \emph{crystal} of type $A_{n-1}$ contains an underlying non-empty finite set $\mathcal{B}$ along with the maps 
$$ e_i,f_i :\mathcal{B} \rightarrow \mathcal{B} \sqcup \{0\} \ \ \text{for $i \in [n-1],$} $$
$$ \wt: \mathcal{B} \rightarrow \mathbb{Z}^{n},$$ where $0 \not \in \mathcal{B} \text{ is an auxiliary element},$ satisfying the following axioms:

\begin{enumerate}
    \item If $x,y \in \mathcal{B}$ then $e_i(x)=y \text{ if and only if } f_i(y)=x$. In this case, we further assume $\wt(y)-\wt(x)= \epsilon_i-\epsilon_{i+1},$ where $ \epsilon_i \in \mathbb{Z}^{n}$ whose $i^{th}$ entry is $1$ and others are $0$.
    \item $\varphi_i(x) - \varepsilon _i (x) = \wt(x) \cdot (\epsilon_i - \epsilon_{i+1})$ $\forall x \in \mathcal{B}$ and $i \in \{1, 2, \ldots , n-1\}$,
\end{enumerate}
where $\varepsilon_i,\varphi_i: \mathcal{B} \rightarrow \mathbb{Z}_+$ are defined as follows:
$$\varepsilon_i(x)=\max\{k|e_i^{k}x\neq 0\}
\text{  and  } \varphi_i(x)=\max\{k|f_i^{k}x\neq 0\}.$$
Our definition of crystals is called seminormal crystals in \cite[\S2.2]{Bump-Anne}. By a slight misuse of notation, a crystal is frequently denoted by its underlying set $\mathcal{B}$. The maps $e_i,f_i$ are called the raising and lowering operators respectively.
\begin{example}
\label{ex:standard crystal}
The \emph{standard} type $A_{n-1}$ crystal consists of the underlying set $[n]
   $ and the maps
\begin{equation*}
f_i(j) = \left\{
    \begin{array}{ll}
         i+1 & \quad \text{if} \quad j=i  \\
         0 & \quad \text{otherwise}
    \end{array}     \right. 
\text{ for $i \in [n-1]$} \quad  \text{ and } \quad \wt(i) = \epsilon _i \quad \text{ for $i \in [n]$}.    
\end{equation*}
We will denote this crystal by $\mathbb{W}_n$.
\end{example}
When $\mathcal{B}$ is a type $A_{n-1}$ crystal, we assign it a directed graph (which we call the \emph{crystal graph} of $ \mathcal{B}$) with vertices in $\mathcal{B}$ and edges labelled by $i \in [n-1]$. We draw an edge labelled by $i$ between two vertices $x,y$ begins from $x$ and terminates at $y$ if and only if $f_i(x)=y$. If the crystal graph, which is viewed as an undirected graph, is connected, we say that $\mathcal{B}$ is \emph{connected}. A subset $\mathcal{B}^{'}$ of a crystal $\mathcal{B},$ which is a union of connected components of $\mathcal{B},$ inherits a crystal structure from $\mathcal{B}$. In this case, we refer to $ \mathcal{B'}$ as a \emph{full subcrystal} of $\mathcal{B}$.

An element $b \in \mathcal{B}$ such that $e_i(b)=0$ for $1 \leq i \leq n-1$ is called a \textit{highest weight} element of $\mathcal{B}$. For example, the element $1$ is a (actually the only) highest weight element of the crystal $\mathbb{W}_n$.
\subsection{Tensor products of crystals} If $\mathcal{A} \text{ and } \mathcal{B}$ are two type $A_{n-1}$ crystals then the tensor product $\mathcal{A} \otimes \mathcal{B}$ is also a crystal of type $A_{n-1}$ whose underlying set is $\{x \otimes y: x \in \mathcal{A}, y \in \mathcal{B}\}$. $\wt(x \otimes y) \text{ is defined as }\wt(x) + \wt(y)$ and the raising, lowering operators are defined as follows:
\begin{equation*}\label{e action}
 e_i(x \otimes y) = \left\{ 
    \begin{array}{ll}
        e_i(x) \otimes y & \quad \text{if} \quad \varepsilon_i(x) > \phi_i(y)  \\
        x \otimes e_i(y) & \quad \text{if} \quad \varepsilon_i(x) \leq \phi_i(y) 
    \end{array}   \right. 
\end{equation*}
and 
\begin{equation*}\label{f action}
 f_i(x \otimes y) = \left\{ 
    \begin{array}{ll}
        f_i(x) \otimes y & \quad \text{if} \quad \varepsilon_i(x) \geq \phi_i(y)  \\
        x \otimes f_i(y) & \quad \text{if} \quad \varepsilon_i(x) < \phi_i(y) 
    \end{array}   \right. 
\end{equation*}
It is understood that $x \otimes 0 = 0 \otimes y = 0$. We adopt the convention for tensor products from \cite[\S2.3]{Bump-Anne}, which is opposite to convention given by Kashiwara \cite{Kashiwara:on-crystal-bases}.
\subsection{Crystal morphism}Let $\mathcal{A},\mathcal{B}$ be two type $A_{n-1}$ crystals. Then a \emph{crystal morphism} from $\mathcal{A}$ to $\mathcal{B}$ is a map $\psi:\mathcal{A} \rightarrow \mathcal{B} \sqcup \{0\}$ such that
\begin{enumerate}
\item If $a \in \mathcal{A}$ and $\psi(a) \in \mathcal{B},$ then $\wt(\psi(a))=\wt(a),$
$\varphi_i(\psi(a))=\varphi_i(a)$ for $1 \leq i \leq n-1$ and 
$\varepsilon_i(\psi(a))=\varepsilon_i(a)$ for $1 \leq i \leq n-1$;
\item $\psi(e_i a)=e_i\psi(a)$ provided $a\in \mathcal{A} ,e_ia \in \mathcal{A}$ and $\psi(a) \in \mathcal{B} ,\psi(e_ia) \in \mathcal{B}$;
\item $\psi(f_i a)=f_i\psi(a)$ provided $a\in \mathcal{A} ,f_ia \in \mathcal{A}$ and $\psi(a) \in \mathcal{B} ,\psi(f_ia) \in \mathcal{B}$.
\end{enumerate}
A morphism $\psi$ is said to be \emph{strict} if $\psi$ commutes with $e_i,f_i$ for $1 \leq i \leq n-1$. Moreover, a crystal morphism $\psi$ is called an \emph{embedding} or \emph{isomorphism} if the induced map $\psi : \mathcal{A} \sqcup \{0\} \rightarrow \mathcal{B} \sqcup \{0\}$ with $\psi(0)=0$ is a injection or bijection respectively.

The following propostion says that tensor products of crystals are associative.
\begin{proposition}\begin{upshape}
    \cite[Proposition 2.32]{Bump-Anne}
\end{upshape}
If $\mathcal{A},\mathcal{B},\mathcal{C}$ are crystals of type $A_{n-1}$ then the bijection $(\mathcal{A}\otimes\mathcal{B})\otimes\mathcal{C} \rightarrow \mathcal{A} \otimes (\mathcal{B} \otimes \mathcal{C})$ in which $(a \times b)\otimes c \mapsto a\otimes(b\otimes c)$ is a crystal isomorphism. We write $a \otimes b \otimes c$ to denote either $(a \times b)\otimes c$ or $(a \times b)\otimes c$.
\end{proposition}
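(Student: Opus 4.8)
The plan is to verify that the given bijection $\Psi\colon(a\otimes b)\otimes c\mapsto a\otimes(b\otimes c)$ is a \emph{strict} morphism of crystals; since $\Psi$ is a bijection by construction, this makes it an isomorphism. A strict bijection automatically satisfies condition~(1) in the definition of a crystal morphism: once $\Psi$ commutes with every $e_i$ and $f_i$ and sends $0$ to $0$, the equalities $\varepsilon_i(\Psi v)=\varepsilon_i(v)$ and $\varphi_i(\Psi v)=\varphi_i(v)$ follow from the defining formulas $\varepsilon_i(v)=\max\{k:e_i^k v\neq 0\}$ and $\varphi_i(v)=\max\{k:f_i^k v\neq 0\}$. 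So the whole argument reduces to two checks: that $\Psi$ preserves $\wt$, and that $\Psi$ commutes with the lowering operators $f_i$ for all $i\in[n-1]$. The first is immediate, because $\wt$ is additive on tensor products and hence both sides equal $\wt(a)+\wt(b)+\wt(c)$. For the second, I note that by axiom~(1) one has $e_i(x)=y\iff f_i(y)=x$, so commutation of the bijection $\Psi$ with $f_i$ forces commutation with $e_i$ as well; thus it is enough to treat $f_i$.

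To handle $f_i$, I would first prove the signature (bracketing) rule for iterated tensor products. Assign to each factor $b$ the word $s_i(b)=(+)^{\varphi_i(b)}(-)^{\varepsilon_i(b)}$, and to a tensor $b_1\otimes\cdots\otimes b_m$ the concatenation $s_i(b_1)\cdots s_i(b_m)$ read from left to right. Repeatedly deleting adjacent pairs $-\,+$ (a minus immediately followed by a plus) reduces this word to one of the form $(+)^{p}(-)^{q}$; the claim is that $f_i$ acts on the tensor factor owning the rightmost surviving $+$ (and returns $0$ when $p=0$), and dually $e_i$ acts on the factor owning the leftmost surviving $-$. For a two-fold tensor this is exactly the rule defining $\otimes$, as one sees by matching the case distinction $\varepsilon_i(x)\geq\varphi_i(y)$ versus $\varepsilon_i(x)<\varphi_i(y)$ against the cancellation at the junction of $s_i(x)$ and $s_i(y)$; the general case then follows by induction, using that the reduced word $(+)^{p}(-)^{q}$ records $\varphi_i=p$ and $\varepsilon_i=q$ of the whole tensor.

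With the signature rule in place the conclusion is immediate. Because any reduced word already has the form $(+)^{p}(-)^{q}$, it coincides with $s_i$ of the corresponding tensor factor, so cancellation may be performed on any subword first without affecting the final reduced word --- that is, the reduction is confluent and independent of the parenthesization. Applying this to $a\otimes b\otimes c$, both bracketings $(a\otimes b)\otimes c$ and $a\otimes(b\otimes c)$ produce the \emph{same} reduced word from the concatenation $s_i(a)s_i(b)s_i(c)$, hence the same distinguished surviving $+$, hence $f_i$ acts on the same underlying factor in both. Therefore $\Psi$ commutes with $f_i$, which finishes the argument. I expect the main obstacle to be establishing the signature rule from the two-term definition: matching the inequality cases with the sign cancellations and running the induction cleanly is the only place real work is required. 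Once that lemma is available, associativity is automatic, since concatenation of words --- and the resulting reduced signature --- does not see how the tensor product was parenthesized.
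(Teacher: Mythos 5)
The paper offers no proof of this proposition at all; it is quoted from \cite[Proposition 2.32]{Bump-Anne}, so the only meaningful comparison is with the textbook argument. Your proof is correct, and it is a self-contained route that differs from the textbook's. Your preliminary reductions are sound: a weight-preserving bijection sending $0$ to $0$ that commutes with all $f_i$ automatically commutes with all $e_i$ (by axiom (1) together with injectivity), and then $\varepsilon_i$ and $\varphi_i$ are preserved because they are defined as $\max\{k : e_i^k \neq 0\}$ and $\max\{k : f_i^k \neq 0\}$; so everything rests on the signature rule. Your two-factor check is the right one and matches this paper's (Bump--Schilling) convention: cancelling $\min(\varepsilon_i(x),\varphi_i(y))$ pairs at the junction reproduces exactly the dichotomy $\varepsilon_i(x)\geq\varphi_i(y)$ versus $\varepsilon_i(x)<\varphi_i(y)$, including the strict/non-strict asymmetry between $f_i$ and $e_i$. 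Two points in your sketch should be written out, though neither is problematic: (i) well-definedness (confluence) of the reduction --- termination is clear since the word shortens, and local confluence is immediate because two occurrences of the redex $-+$ cannot share a letter; (ii) the induction over an arbitrary bracketing, where one writes the bracketed tensor as $U\otimes V$, invokes the inductive hypothesis that the reduced subwords are $(+)^{\varphi_i(U)}(-)^{\varepsilon_i(U)}$ and $(+)^{\varphi_i(V)}(-)^{\varepsilon_i(V)}$, and observes that the top-level two-factor rule is precisely cancellation at the junction of these two reduced subwords. With that in place, both bracketings of $a\otimes b\otimes c$ single out the same factor for $f_i$, as you say. Note that what you really prove en route is the bracketing form of \cite[Lemma 2.33]{Bump-Anne}, stated in this paper as Lemma~\ref{lemma:signature}: the textbook proceeds in the opposite order, proving associativity first by a direct case analysis on the inequalities and only afterwards the $k$-fold formula. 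Your reversal is legitimate, since your signature lemma is derived from the two-factor definition alone for arbitrary bracketings, and it buys a cleaner conceptual explanation of associativity: concatenation of signature words simply does not see the parenthesization.
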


Then the following lemma says how the crystal operators act on $k$-fold the tenosor product crystals.
\begin{lemma}
\begin{upshape}
    \cite[Lemma 2.33]{Bump-Anne}
\end{upshape}
\label{lemma:signature}
Let $\mathcal{B}_1,\mathcal{B}_2,\dots,\mathcal{B}_k$ be crystals of type $A_{n-1}$ and $x_i \in \mathcal{B}_{i}$ for $1 \leq i \leq k$. Then 
\begin{equation}
\label{eq:varphi_i}
    \varphi_i(x_1\otimes x_2 \otimes \cdots \otimes x_k)= \displaystyle\max_{j=1}^{k} \Biggl(\displaystyle\sum_{h=1}^{j}\varphi_i(x_h)- \displaystyle\sum_{h=1}^{j-1}\varepsilon_i(x_h)\Biggl)
\end{equation}
and if $j$ is the first value in \eqref{eq:varphi_i} where the maximum attained, then
\begin{equation}
\label{eq:f_i}
    f_i(x_1\otimes x_2 \otimes \cdots \otimes x_k)= x_1 \otimes \cdots \otimes f_i(x_j)\otimes \cdots \otimes x_k
\end{equation}
Similarly,
\begin{equation}
\label{eq:varepsilon_i}
    \varepsilon_i(x_k\otimes \cdots \otimes x_2 \otimes x_1)= \displaystyle\max_{j=1}^{k} \Biggl(\displaystyle\sum_{h=1}^{j}\varepsilon_i(x_h)- \displaystyle\sum_{h=1}^{j-1}\varphi_i(x_h)\Biggl)
\end{equation}
and if $j$ is the first value in \eqref{eq:varepsilon_i} where the maximum attained, then
\begin{equation}
\label{eq:e_i}
    e_i(x_k\otimes \cdots \otimes x_2 \otimes x_1)= x_k \otimes \cdots \otimes e_i(x_j)\otimes \cdots \otimes x_1
\end{equation}
\end{lemma}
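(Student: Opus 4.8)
The plan is to prove both halves of the statement by induction on the number of tensor factors $k$, using the associativity isomorphism (the preceding Proposition) to peel off the leftmost factor and reduce everything to the two-factor case. I will treat only the assertions \eqref{eq:varphi_i}--\eqref{eq:f_i} about $\varphi_i$ and $f_i$; the statements \eqref{eq:varepsilon_i}--\eqref{eq:e_i} about $\varepsilon_i$ and $e_i$ then follow by the symmetry that exchanges $e_i \leftrightarrow f_i$, $\varepsilon_i \leftrightarrow \varphi_i$ and reverses the order of the tensor factors, a symmetry already visible in the opposite forms of the two defining rules for $e_i(x\otimes y)$ and $f_i(x\otimes y)$.

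First I would record the one fact from the crystal axioms that the whole argument rests on: in any seminormal crystal, applying $f_i$ shifts the string coordinates by $(\varepsilon_i,\varphi_i)\mapsto(\varepsilon_i+1,\varphi_i-1)$ whenever $f_i(x)\neq 0$. Indeed, if $y=f_i(x)\neq 0$ then $e_i(y)=x$ by axiom~(1), so $e_i^{k}(y)=e_i^{k-1}(x)$ gives $\varepsilon_i(y)=\varepsilon_i(x)+1$ straight from the definition of $\varepsilon_i$; combining this with axiom~(2) applied to both $x$ and $y$, together with $\wt(y)=\wt(x)-\epsilon_i+\epsilon_{i+1}$, forces $\varphi_i(y)=\varphi_i(x)-1$. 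This is the only place the axioms are used, and it is what lets me track $\varepsilon_i$ and $\varphi_i$ of each factor as the operators are applied.

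Next I would settle the base case $k=2$ directly from the tensor product rule. Writing $a=\varepsilon_i(x)$, $b=\varphi_i(x)$, $d=\varphi_i(y)$, I apply $f_i$ repeatedly to $x\otimes y$ and record which factor is hit, using the shift rule above. If $a\geq d$ the operator acts on the left factor throughout, and after $b$ steps the left factor is annihilated, giving $\varphi_i(x\otimes y)=b=\max\bigl(\varphi_i(x),\,\varphi_i(x)+\varphi_i(y)-\varepsilon_i(x)\bigr)$; if $a<d$ it acts on the right factor for the first $d-a$ steps (while the right factor's $\varphi_i$ exceeds $a$) and then on the left for $b$ further steps, for a total of $b+d-a$, again matching the claimed maximum. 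In both cases the first application of $f_i$ lands on the factor indexed by the first maximizer $j\in\{1,2\}$, which is exactly the two-factor instance of \eqref{eq:f_i}. The point needing care here is the boundary behaviour: I must check that no intermediate step annihilates the "wrong" factor, i.e. that the operator is always directed at a factor whose $\varphi_i$ is still positive, which the shift rule guarantees.

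Finally, for the inductive step I would group $x_1\otimes\cdots\otimes x_k = x_1\otimes Y$ with $Y=x_2\otimes\cdots\otimes x_k$, and set $M_j=\sum_{h=1}^{j}\varphi_i(x_h)-\sum_{h=1}^{j-1}\varepsilon_i(x_h)$ and $N_j=\sum_{h=2}^{j}\varphi_i(x_h)-\sum_{h=2}^{j-1}\varepsilon_i(x_h)$. The identity $M_j=\varphi_i(x_1)-\varepsilon_i(x_1)+N_j$ for $j\geq 2$ converts the two-factor formula $\varphi_i(x_1\otimes Y)=\max\bigl(\varphi_i(x_1),\,\varphi_i(x_1)+\varphi_i(Y)-\varepsilon_i(x_1)\bigr)$, together with the inductive hypothesis $\varphi_i(Y)=\max_{j\geq 2}N_j$, into $\varphi_i(x_1\otimes Y)=\max_{j\geq 1}M_j$, which is \eqref{eq:varphi_i}. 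The same identity shows that the first maximizer of $(M_j)_{j\geq 1}$ is $j=1$ precisely when the two-factor rule sends $f_i$ to $x_1$, and otherwise equals the first maximizer of $(N_j)_{j\geq 2}$, which by induction is the factor on which $f_i$ acts inside $Y$; this gives \eqref{eq:f_i}. I expect the main obstacle to be organizational rather than conceptual—keeping the two sequences of partial sums aligned across the associativity isomorphism and confirming that "first maximizer" is preserved—while the genuine content sits in the boundary analysis of the base case.
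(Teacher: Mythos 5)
Your proof is correct; note that the paper itself states this lemma as a citation to \cite[Lemma 2.33]{Bump-Anne} and gives no proof of its own, so there is nothing internal to compare against. Your argument --- verifying the string-coordinate shift $(\varepsilon_i,\varphi_i)\mapsto(\varepsilon_i+1,\varphi_i-1)$ from the seminormal axioms, settling $k=2$ by tracking which factor $f_i$ hits (including the tie case, where the $\geq$ in the $f_i$ rule correctly matches ``first maximizer''), and then inducting via associativity with the partial-sum identity $M_j=\varphi_i(x_1)-\varepsilon_i(x_1)+N_j$, with the $\varepsilon_i/e_i$ half obtained by the order-reversing symmetry --- is essentially the standard proof found in the cited reference.
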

The \emph{character} of a crystal $\mathcal{B}$ is defined by $ch(\mathcal{B}) := \displaystyle \sum _{ u \in \mathcal{B}} {\characx}^{\wt(u)}$. If $\mathcal{B}_1,\dots,\mathcal{B}_k$ are crystals then it is easy to see that $ ch(\mathcal{B}_{1} \otimes \mathcal{B}_{2} \otimes \cdots \otimes \mathcal{B}_{k})=ch(\mathcal{B}_{1})ch(\mathcal{B}_{2})\cdots ch(\mathcal{B}_{k})$.
\subsection{Crystal structure on set-valued tableaux of skew shape}
\label{def:crystal-SVT}
A crystal structure (of type $A_{n-1}$) on $\SVT_n(\lambda)$ is already given by Monical-Pechenik-Scrimshaw in \cite[\S3]{Travis:symmetric}. We extend it to a crystal structure on $\SVT_n(\lambda/\mu)$. Fix $T \in \SVT_n(\lambda/\mu)$ and $i \in [n-1]$. Write $-$ above each column of $T$ that contains an $i+1$ but not an $i$ and write $+$ above every column containing an $i$ but not $i+1$. Then by successively canceling $(-,+)$ pairs (in that ordered pair) we obtain a sequence as follows
\[
    \underbrace{+\cdots+}_{r}
    \underbrace{-\cdots-}_{s}
\]

$f_i(T):$ If $r=0$ then $f_i(T)=0$. Otherwise let $\mathbf{b}$ be the box that corresponds to the rightmost uncanceled $+$. Then $f_i(T)$ is given by any of the following:
\begin{itemize}
    \item if there is an box  $\mathbf{b}^{\rightarrow}$ immediately to the right of the box $\mathbf{b}$ that contains an $i$ (in this case, $i+1 \in \mathbf{b}^{\rightarrow}$) then $f_i(T)$ is given by removing $i$ from $\mathbf{b}^{\rightarrow}$ and adding an $i+1$ to $\mathbf{b}$.

    \item otherwise we change the $i$ in $\mathbf{b}$ to an $i+1$.
\end{itemize}

$e_i(T):$ If $s=0$ then $e_i(T)=0$. Otherwise let $\mathbf{b}$ be the box that corresponds to the leftmost uncanceled $-$.
Then $e_i(T)$ is given by any of the following:
\begin{itemize}
    \item if there is a box $\mathbf{b}^{\leftarrow}$ immediately to the left of the box $\mathbf{b}$ that contains an $i+1$ (in this case, $i \in \mathbf{b}^{\leftarrow}$) then $e_i(T)$ is given by removing the $i+1$ from $\mathbf{b}^{\leftarrow}$ and adding an $i$ to $\mathbf{b}$.
    \item otherwise we change the $i+1$ in $\mathbf{b}$ to an $i$.
\end{itemize}
The proof of well-defineness of the action of $e_i$ and $f_i$ is exactly same as in case of partition shape, see \cite[Lemma 3.2]{Travis:symmetric} for more details. It is clear that the axioms (1) and (2) in the definition of the crystals are also satisfied.

\bremark
It is clear from the definition that $ \ex(T)=\ex(e_i(T))=\ex(f_i(T))$ for any $T \in \SVT_n(\lambda/\mu),$ provided $ e_i(T),f_i(T) \in \SVT_n(\lambda/\mu)$ (see Figure~\ref{fig:crys-example}).
\eremark
\begin{figure}
    \centering
\begin{tikzpicture}[scale=0.8]
\draw (0,0)--(1.4,0)--(1.4,1.4);
\draw (0,0)--(0,0.7)--(1.4,0.7);
\draw (0.7,0)--(0.7,1.4)--(1.4,1.4);
\node at (0.35,0.35) {$1,2$};
\node at (1.05,0.35+0.025) {$2$};
\node at (1.05,1.05) {$1$};
\draw[red][->] (0.7,-0.2) -- (0.7,-1.2);
\node at (1,-0.7) {$\textcolor{red}{2}$};

\draw (0,-3)--(1.4,-3)--(1.4,-1.6);
\draw (0,-3)--(0,-2.3)--(1.4,-2.3);
\draw (0.7,-3)--(0.7,-1.6)--(1.4,-1.6);
\node at (0.35,-2.65) {$1,2$};
\node at (1.05,-2.65+0.025) {$3$};
\node at (1.05,-1.95) {$1$};
\draw[red][->] (0.2,-3.2) -- (-0.4,-4.2);
\node at (-0.3,-3.5) {$\textcolor{red}{2}$};
\draw[blue][->] (1.2,-3.2) -- (1.8,-4.2);
\node at (1.7,-3.5) {$\textcolor{blue}{1}$};

\draw (-1.5,-5.8)--(-0.1,-5.8)--(-0.1,-4.4);
\draw (-1.5,-5.8)--(-1.5,-5.1)--(-0.1,-5.1);
\draw (-0.8,-5.8)--(-0.8,-4.4)--(-0.1,-4.4);
\node at (0.35-1.5,0.35-5.8) {$1,3$};
\node at (1.05-1.5,0.35-5.8+0.025) {$3$};
\node at (1.05-1.5,1.05-5.8) {$1$};
\draw[blue][->] (-0.8,-6) -- (-0.8,-7);
\node at (-0.5,-6.5) {$\textcolor{blue}{1}$};

\draw (-1.5,-8.75)--(-0.1,-8.75)--(-0.1,-7.35);
\draw (-1.5,-8.75)--(-1.5,-8.05)--(-0.1,-8.05);
\draw (-0.8,-8.75)--(-0.8,-7.35)--(-0.1,-7.35);
\node at (0.35-1.5,0.35-8.75) {$1,3$};
\node at (1.05-1.5,0.35-8.75+0.025) {$3$};
\node at (1.05-1.5,1.05-8.75) {$2$};
\draw[blue][->] (-0.8,-8.95) -- (-0.8,-9.95);
\node at (-0.5,-9.5) {$\textcolor{blue}{1}$};

\draw (-1.5,-11.7)--(-0.1,-11.7)--(-0.1,-10.3);
\draw (-1.5,-11.7)--(-1.5,-11)--(-0.1,-11);
\draw (-0.8,-11.7)--(-0.8,-10.3)--(-0.1,-10.3);
\node at (0.35-1.5,0.35-11.7) {$2,3$};
\node at (1.05-1.5,0.35-11.7+0.025) {$3$};
\node at (1.05-1.5,1.05-11.7) {$2$};

\draw (1.5,-5.8)--(2.9,-5.8)--(2.9,-4.4);
\draw (1.5,-5.8)--(1.5,-5.1)--(2.9,-5.1);
\draw (2.2,-5.8)--(2.2,-4.4)--(2.9,-4.4);
\node at (0.35+1.5,0.35-5.8) {$1,2$};
\node at (1.05+1.5,0.35-5.8+0.025) {$3$};
\node at (1.05+1.5,1.05-5.8) {$2$};
\end{tikzpicture}
\begin{tikzpicture}
[scale=0.8]
\node at (0,0) {$\null$};
\node at (1,0) {$\null$};
\end{tikzpicture}
\begin{tikzpicture}
[scale=0.8]
\draw (0,0)--(1.4,0)--(1.4,1.4);
\draw (0,0)--(0,0.7)--(1.4,0.7);
\draw (0.7,0)--(0.7,1.4)--(1.4,1.4);
\node at (0.35,0.35+0.025) {$1$};
\node at (1.05,0.35) {$2,3$};
\node at (1.05,1.05) {$1$};
\draw[blue][->] (0.7,-0.2) -- (0.7,-1.2);
\node at (1,-0.7) {$\textcolor{blue}{1}$};

\draw (0,-2.95)--(1.4,-2.95)--(1.4,-1.55);
\draw (0,-2.95)--(0,-2.25)--(1.4,-2.25);
\draw (0.7,-2.95)--(0.7,-1.55)--(1.4,-1.55);
\node at (0.35,0.35-2.95+0.025) {$2$};
\node at (1.05,0.35-2.95) {$2,3$};
\node at (1.05,1.05-2.95) {$1$};
\draw[red][->] (0.7,-3.15) -- (0.7,-4.15);
\node at (1,-3.7) {$\textcolor{red}{2}$};

\draw (0,-5.9)--(1.4,-5.9)--(1.4,-4.5);
\draw (0,-5.9)--(0,-5.2)--(1.4,-5.2);
\draw (0.7,-5.9)--(0.7,-4.5)--(1.4,-4.5);
\node at (0.35,0.35-5.9) {$2,3$};
\node at (1.05,0.35-5.9+0.025) {$3$};
\node at (1.05,1.05-5.9) {$1$};
\node at (0.35-1.5,0.35-11.7) {$\null$};
\end{tikzpicture}
\caption{Crystal structure on the set of all elements in $\SVT_{3}((2,2)/(1)),$ whose excess is $(0,1)$.}
\label{fig:crys-example}
\end{figure}
\subsection{Demazure crystals}For $w \in S_n, \lambda \in \mathcal{P}[n],$ the \emph{Demazure crystal} $\mathcal{B}_{w} (\lambda)$ is defined as:
\begin{equation*}
\label{eq:demcrys}
  \mathcal{B}_{w} (\lambda) := \{f_{i_1} ^{k_1} f_{i_2} ^{k_2} \cdots f_{i_p} ^{k_p}  T_{\lambda} : k_j \geq0 \} \textcolor{black}{\setminus \{0\}}, 
\end{equation*}
where $s_{i_1} s_{i_2} \cdots s_{i_p}$ is any reduced expression of $w$ and  $T_{\lambda}$ is the unique semi-standard Young tableau of shape and weight both equal to $\lambda$. 

Clearly, $\mathcal{B}_{w} (\lambda)$ is a certain subset of $ \Tab_n(\lambda)$. For instance, see Figure~\ref{fig:Demazure}. It is well-known that the Demazure crystal $\mathcal{B}_{w} (\lambda)$ is independent of the chosen reduced expression of $w$ \cite[Theorem 13.5]{Bump-Anne}. The following proposition is the refined Demazure character formula in \cite{Kashiwara:refine-Demazure}.
\begin{proposition}    
For $\lambda \in \mathcal{P}[n]$ and $ w \in S_n,$ $\displaystyle\sum_{T \in \mathcal{B}_{w}(\lambda)}\characx^{\wt(T)} = \key_{w.\lambda}$.
\end{proposition}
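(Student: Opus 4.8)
The plan is to induct on the length $\ell(w)$, running in parallel two recursions — one for key polynomials via the Demazure operators $T_i$, and one for the crystals $\mathcal{B}_w(\lambda)$ via a crystal-level Demazure operator $\mathfrak{D}_i$ — and then to glue them with a single character identity. Throughout I regard $\mathcal{B}_w(\lambda)$ as a subset of $\Tab_n(\lambda)$, which is connected with unique highest weight element $T_\lambda$. For the base case $w=e$ we have $\mathcal{B}_e(\lambda)=\{T_\lambda\}$, so $ch(\mathcal{B}_e(\lambda))=\characx^{\wt(T_\lambda)}=\characx^{\lambda}=\key_\lambda$, the last equality because $\key_\lambda=T_e(\characx^{\lambda})=\characx^{\lambda}$ for a partition.

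For the inductive step write $w=s_iv$ with $\ell(w)=\ell(v)+1$. On the key-polynomial side the recursion is immediate: the factorization $T_w=T_iT_v$ is reduced, the decreasing sort of $v.\lambda$ is $\lambda$, and hence $\key_{w.\lambda}=T_w(\characx^{\lambda})=T_i T_v(\characx^{\lambda})=T_i(\key_{v.\lambda})$, using the well-definedness of $T_v$ and of $\key$ recalled in the excerpt. On the crystal side, unwinding the definition of $\mathcal{B}_w(\lambda)$ using a reduced word for $w$ that begins with $s_i$ gives $\mathcal{B}_w(\lambda)=\mathfrak{D}_i\bigl(\mathcal{B}_v(\lambda)\bigr)$, where $\mathfrak{D}_i(S):=\{f_i^{\,k}b : b\in S,\ k\ge 0\}\setminus\{0\}$. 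It therefore suffices to prove the character identity
\[
ch(\mathfrak{D}_i S)=T_i\bigl(ch(S)\bigr)\qquad\text{for } S=\mathcal{B}_v(\lambda).
\]

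I would prove this string by string. Decompose $\Tab_n(\lambda)$ into its $i$-strings, the connected components of the graph carrying only the edges labelled $i$; each such string is a chain $b_0\xrightarrow{f_i}b_1\xrightarrow{f_i}\cdots\xrightarrow{f_i}b_\ell$ with $\varepsilon_i(b_0)=0$, whose weights satisfy $\wt(b_j)=\wt(b_0)-j(\epsilon_i-\epsilon_{i+1})$ and $\wt(b_0)\cdot(\epsilon_i-\epsilon_{i+1})=\ell\ge 0$. Writing $T_i(f)=\partial_i(x_i f)$ with $\partial_i$ the divided difference, one sees that $T_i$ fixes $s_i$-symmetric polynomials and sends the top monomial $\characx^{\wt(b_0)}$ to the full string-sum $\sum_{j=0}^{\ell}\characx^{\wt(b_j)}$. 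Hence the identity holds on a string whenever $S$ meets it in $\emptyset$, in its top $\{b_0\}$, or in the whole string: in these three cases $\mathfrak{D}_i$ returns $\emptyset$, the whole string, and the whole string, matching $T_i$ applied to $0$, to the top monomial, and to the (symmetric) string-sum, respectively.

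The main obstacle, and the genuinely deep input, is the structural fact that $\mathcal{B}_v(\lambda)$ really does meet every $i$-string in exactly one of these three ways, never in a proper middle segment, for which the identity fails — for instance $T_i(x_i^2+x_ix_{i+1})=x_i^2+2x_ix_{i+1}+x_{i+1}^2$ overshoots the string-sum. I would carry this through the induction as a single statement maintained at every stage: $\mathcal{B}_v(\lambda)$ has the three-way intersection property for \emph{all} indices simultaneously, and $\mathfrak{D}_i$ preserves it (which also yields independence of the reduced word, i.e.\ compatibility of the $\mathfrak{D}_i$ with the braid relations). This preservation is exactly the Littelmann--Kashiwara analysis reducing to the rank-$2$ subsystems $A_1\times A_1$ and $A_2$, and is the heart of Kashiwara's refined Demazure character formula \cite{Kashiwara:refine-Demazure}; I would either invoke it or perform the two rank-$2$ verifications explicitly. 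Granting it, the two recursions agree term by term, closing the induction and giving $ch(\mathcal{B}_w(\lambda))=\key_{w.\lambda}$.
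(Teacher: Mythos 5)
The paper does not actually prove this proposition: it is stated as Kashiwara's refined Demazure character formula and delegated entirely to the citation \cite{Kashiwara:refine-Demazure}. So the real question is whether your argument stands on its own, and it does not: there is a genuine gap at exactly the step you flag as the "main obstacle". Your base case, the recursion $\key_{w.\lambda}=T_i(\key_{v.\lambda})$, the identity $\mathcal{B}_w(\lambda)=\mathfrak{D}_i(\mathcal{B}_v(\lambda))$, and the string-by-string verification that $ch(\mathfrak{D}_iS)=T_i(ch(S))$ whenever $S$ meets every $i$-string in $\emptyset$, its top, or all of it, are all correct and standard. The gap is the claim that the three-way intersection property (for all indices simultaneously) is preserved by $\mathfrak{D}_i$, and that this preservation can be established by rank-$2$ verifications. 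That preservation statement is false for general subsets, already in type $A_2$, so the rank-$2$ verification you propose would fail rather than succeed. Concretely, take $n=3$, $\lambda=(2,1,0)$, and write $[ab|c]$ for the tableau in $\Tab_3(\lambda)$ with first row $ab$ and second row $c$. The $1$-strings of $\Tab_3(\lambda)$ are $\{[11|2],[12|2]\}$, $\{[11|3],[12|3],[22|3]\}$, $\{[13|2]\}$, $\{[13|3],[23|3]\}$, and the $2$-strings are $\{[11|2],[11|3]\}$, $\{[12|2],[13|2],[13|3]\}$, $\{[12|3]\}$, $\{[22|3],[23|3]\}$ (tops listed first). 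The subset
\[
S=\{[11|2],\ [11|3],\ [12|3],\ [22|3]\}
\]
meets every $1$-string and every $2$-string in $\emptyset$, its top, or all of it, and is even closed under $e_1$ and $e_2$. Yet $\mathfrak{D}_2(S)=S\cup\{[23|3]\}$ meets the $1$-string $\{[13|3],[23|3]\}$ only in its \emph{bottom} element, so the property is destroyed. The defect is fatal, not cosmetic: one computes that $ch(\mathfrak{D}_1\mathfrak{D}_2S)$ contains the monomial $x_2x_3^2$ while $T_1T_2(ch(S))$ does not, so the character recursion genuinely breaks one step later.

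Consequently the invariant you propose to carry through the induction is not self-propagating, and strengthening it by $e_i$-closure does not help, since the $S$ above is $e_i$-closed. Nor can you repair the scheme by restricting the preservation lemma to honest Demazure crystals: the reduction to rank two proceeds by intersecting with $\{i,j\}$-components, and such intersections of Demazure crystals are in general only string-property subsets of rank-two crystals, not rank-two Demazure crystals --- i.e., exactly the class for which the lemma fails. What is true is that $\mathfrak{D}_i$ carries $\mathcal{B}_v(\lambda)$ (with its string property) to $\mathcal{B}_{s_iv}(\lambda)$ (with its string property); but proving this is the actual content of Kashiwara's theorem, and the known proofs (Kashiwara via crystal/global bases, Littelmann via the path model, Joseph via Demazure modules) all require finer structure than the three-way property, precisely because of examples like $S$. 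Your fallback of invoking \cite{Kashiwara:refine-Demazure} for this step does complete the argument, but then your proof is not an independent one: it amounts to the same citation the paper itself makes, supplemented by the (correctly executed) standard derivation of the character formula from the string property.
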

\begin{figure}
    \centering
\begin{tikzpicture}[scale=0.8]
    \draw (0,0)--(0,1.4)--(1.4,1.4);
    \draw (0,0)--(0.7,0)--(0.7,1.4);
    \draw (0,0.7)--(1.4,0.7)--(1.4,1.4);
    \node at (0.35,0.35) {$2$};
    \node at (0.35,1.05) {$1$};
    \node at (1.05,1.05) {$1$};
    \draw[red][->] (0.1,-0.2) -- (-0.55,-1.2);
    \node at (-0.45,-0.5) {$\textcolor{red}{2}$};
    \draw[blue][->] (0.85,-0.2) -- (1.5,-1.2);
    \node at (1.45,-0.5) {$\textcolor{blue}{1}$};

    \draw (0-1.5,0-2.8)--(0-1.5,1.4-2.8)--(1.4-1.5,1.4-2.8);
    \draw (0-1.5,0-2.8)--(0.7-1.5,0-2.8)--(0.7-1.5,1.4-2.8);
    \draw (0-1.5,0.7-2.8)--(1.4-1.5,0.7-2.8)--(1.4-1.5,1.4-2.8);
    \node at (0.35-1.5,0.35-2.8) {$3$};
    \node at (0.35-1.5,1.05-2.8) {$1$};
    \node at (1.05-1.5,1.05-2.8) {$1$};
    \draw[blue][->] (0.35-1.5,-0.2-2.8) -- (0.35-1.5,-1.2-2.8);
    \node at (0.6-1.5,-0.5-2.8) {$\textcolor{blue}{1}$};

    \draw (0-1.5,0-2.8-2.8)--(0-1.5,1.4-2.8-2.8)--(1.4-1.5,1.4-2.8-2.8);
    \draw (0-1.5,0-2.8-2.8)--(0.7-1.5,0-2.8-2.8)--(0.7-1.5,1.4-2.8-2.8);
    \draw (0-1.5,0.7-2.8-2.8)--(1.4-1.5,0.7-2.8-2.8)--(1.4-1.5,1.4-2.8-2.8);
    \node at (0.35-1.5,0.35-2.8-2.8) {$3$};
    \node at (0.35-1.5,1.05-2.8-2.8) {$1$};
    \node at (1.05-1.5,1.05-2.8-2.8) {$2$};
    \draw[blue][->] (0.35-1.5,-0.2-2.8-2.8) -- (0.35-1.5,-1.2-2.8-2.8);
    \node at (0.6-1.5,-0.5-2.8-2.8) {$\textcolor{blue}{1}$};

    \draw (0-1.5,0-2.8-2.8-2.8)--(0-1.5,1.4-2.8-2.8-2.8)--(1.4-1.5,1.4-2.8-2.8-2.8);
    \draw (0-1.5,0-2.8-2.8-2.8)--(0.7-1.5,0-2.8-2.8-2.8)--(0.7-1.5,1.4-2.8-2.8-2.8);
    \draw (0-1.5,0.7-2.8-2.8-2.8)--(1.4-1.5,0.7-2.8-2.8-2.8)--(1.4-1.5,1.4-2.8-2.8-2.8);
    \node at (0.35-1.5,0.35-2.8-2.8-2.8) {$3$};
    \node at (0.35-1.5,1.05-2.8-2.8-2.8) {$2$};
    \node at (1.05-1.5,1.05-2.8-2.8-2.8) {$2$};

    \draw (0-1.5+2.5,0-2.8)--(0-1.5+2.5,1.4-2.8)--(1.4-1.5+2.5,1.4-2.8);
    \draw (0-1.5+2.5,0-2.8)--(0.7-1.5+2.5,0-2.8)--(0.7-1.5+2.5,1.4-2.8);
    \draw (0-1.5+2.5,0.7-2.8)--(1.4-1.5+2.5,0.7-2.8)--(1.4-1.5+2.5,1.4-2.8);
    \node at (0.35-1.5+2.5,0.35-2.8) {$2$};
    \node at (0.35-1.5+2.5,1.05-2.8) {$1$};
    \node at (1.05-1.5+2.5,1.05-2.8) {$2$};
\end{tikzpicture}
\caption{The Demazure crystal $\mathcal{B}_{s_1s_2} (2,1,0) $}
\label{fig:Demazure}
\end{figure}
\section{proof of the main theorem}
\label{Section 4}
In this section, given a skew shape $\lambda/\mu$ $(\lambda, \mu \in \mathcal{P}[n])$ and a flag $\Phi \in \mathcal{F}[n],$ we provide a Demazure crystal structure on $\SVT(\lambda/\mu, \Phi)$. 
\begin{definition}
The \emph{row reading word} of a semi-standard Young tableau $T$, denoted by $r_T$, is obtained by reading the entries of $T$ row-by-row, starting from bottom row, left to right and continuing up the rows.
\end{definition}
\begin{example}
$T= \ytableausetup{mathmode,
notabloids}
  \begin{ytableau}
  \none &1&2\\2&4 \\3  
  \end{ytableau}
$ is a semi-standard Young tableau of shape $(3,2,1)/(1)$ with row reading word $r_{T}=32412$.
\end{example}
For $\alpha \in \mathbb{Z}^n_{+} ,$ we denote by $\mathbf{b}(\alpha)$ the word $ b^{(n)} \cdots b^{(2)}b^{(1)}$ in which $b^{(j)}$ consists of a string of $\alpha_j$ copies of $j$. For instance, $\mathbf{b}(2,3,0,1) = 422211$.

Given $\alpha \in \mathbb{Z}^n_{+} $ and a flag $\Phi \in \mathcal{F}[n],$ we define $\mathcal{W}(\alpha, \Phi)$ as the set of all words $\mathbf{v}= v^{(n)} \cdots v^{(2)} v^{(1)}$ in $\{1,2,\dots\}$ such that each $v^{(i)}$ is a maximal row word (i.e., the last letter of $v^{(i)}$ is greater than the first letter of $v^{(i-1)}$) of length $\alpha_i$ together with the following properties:
\begin{itemize}
    \item each letter in $v^{(i)}$ can be at most $\Phi_i$.
    \item $(
    \begin{bmatrix}
    \mathbf{b}(\alpha)\\
    \mathbf{v}
    \end{bmatrix} \rightarrow \emptyset) =(-$, $\text{key}(\alpha)) ,$ where $\text{key}(\alpha)$ is the unique semi-standard Young tableau of shape $\alpha^{\dagger}$ and weight $\alpha$.
\end{itemize}
\begin{example}
    Let $\alpha=(1,2,0,1)$ and $ \Phi=(1,2,3,4)$. Then the set of all words $\mathbf{v}= v^{(4)} v^{(3)} v^{(2)} v^{(1)}$ in $\mathcal{W}(\alpha, \Phi)$ are given below:
    $$ 3\cdot\cdot 12 \cdot 1 \quad 3\cdot \cdot 22 \cdot 1 \quad 4\cdot \cdot 22 \cdot1 $$
\end{example}
\begin{theorem}
\begin{upshape} \cite[Theorem 21]{RS} \end{upshape}
\label{theorem:RS-hat}
For $\beta \in \mathbb{Z}_+^{n}$ and flag $\Phi \in \mathcal{F}[n]$, either $\mathcal{W}(\beta, \Phi)$ is empty or there is a bijection $\zeta$ between the sets $\mathcal{W}(\beta, \Phi)$ and $ \mathcal{W}(\widehat{\beta}, \Phi_0)$ for some $\widehat{\beta} \in \mathbb{Z}_+^{n}$ with $\beta^{\dagger} = \widehat{\beta}^{\dagger}$ such that if $\mathbf{u} \mapsto \zeta (\mathbf{u})$ then $\mathbf{u}$ and $\zeta (\mathbf{u})$ are Knuth equivalent. Here $\Phi_0$ denotes the standard flag $(1,2,\dots,n)$.
\end{theorem}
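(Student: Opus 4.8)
The plan is to convert the statement into a comparison of insertion tableaux and then run a single structural reduction to the standard flag. First I would exploit that the second defining condition of $\mathcal{W}(\alpha,\Phi)$ pins the recording tableau of the Burge correspondence to be exactly $\text{key}(\alpha)$. Since Burge (Theorem~\ref{Theorem:Burge}) is a bijection onto pairs $(P,Q)$, fixing $Q=\text{key}(\alpha)$ makes $\mathbf{v}\mapsto P(\mathbf{v})$ injective on $\mathcal{W}(\alpha,\Phi)$, and two words in $\mathcal{W}(\alpha,\Phi)$ are Knuth equivalent iff they share the same $P(\mathbf{v})$. So it suffices to study the set of insertion tableaux $\mathcal{P}(\alpha,\Phi):=\{P(\mathbf{v}):\mathbf{v}\in\mathcal{W}(\alpha,\Phi)\}$, all of shape $\alpha^{\dagger}$, and to produce $\widehat\beta$ with $\widehat\beta^{\dagger}=\beta^{\dagger}$ such that $\mathcal{P}(\beta,\Phi)=\mathcal{P}(\widehat\beta,\Phi_0)$ (or else both are empty). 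The map $\zeta$ is then the composite $\mathbf{v}\mapsto P(\mathbf{v})\mapsto\mathbf{v}'$ sending $\mathbf{v}$ to the unique word of $\mathcal{W}(\widehat\beta,\Phi_0)$ with the same insertion tableau; Knuth equivalence of $\mathbf{u}$ and $\zeta(\mathbf{u})$ is then automatic, as they have a common $P$-tableau.

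The second step is to translate the two defining conditions into a single inequality on the Lascoux--Schützenberger right key $K_{+}(P)$. Because $Q=\text{key}(\alpha)$ prescribes, for each $i$, exactly which cells of the shape $\alpha^{\dagger}$ are created while the block $v^{(i)}$ is inserted (namely the cells carrying the value $i$ in $\text{key}(\alpha)$), the flag bound ``every letter of $v^{(i)}$ is at most $\Phi_i$'' becomes a cellwise bound on $K_{+}(P)$ over those cells. I would package this as $P\in\mathcal{P}(\alpha,\Phi)\iff K_{+}(P)\le \Phi\circ\text{key}(\alpha)$ cellwise, where $\Phi\circ\text{key}(\alpha)$ is the (row- and column-weakly-increasing) filling obtained by applying the weakly increasing map $\Phi$ to each entry of $\text{key}(\alpha)$. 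The ``maximal row'' hypothesis on the blocks $v^{(i)}$ enters here, through the standard fact that concatenating maximal weakly increasing runs computes the right key via column reading words. The hard part will be establishing this translation rigorously, since the maximal-row condition couples consecutive blocks and the coupling must be shown to be absorbed entirely into the single threshold $\Phi\circ\text{key}(\alpha)$; this is the Lascoux--Schützenberger core of the argument and is where most of the work lies.

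Finally I would read off $\widehat\beta$. For the standard flag $\Phi_0=(1,2,\dots,n)$, the map $\Phi_0$ is the identity on entries, so $\Phi_0\circ\text{key}(\gamma)=\text{key}(\gamma)$ and $\mathcal{P}(\gamma,\Phi_0)=\{P:K_{+}(P)\le\text{key}(\gamma)\}$, which is exactly the Demazure-crystal description of $\key_{\gamma}$. Since every $K_{+}(P)$ is itself a key, the ideal $\{K_{+}(P)\le \Phi\circ\text{key}(\beta)\}$ equals $\{K_{+}(P)\le K\}$ where $K$ is the largest key of shape $\beta^{\dagger}$ dominated by $\Phi\circ\text{key}(\beta)$; writing $K=\text{key}(\widehat\beta)$ gives a composition with $\widehat\beta^{\dagger}=\beta^{\dagger}$ and $\mathcal{P}(\beta,\Phi)=\mathcal{P}(\widehat\beta,\Phi_0)$. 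When no key lies below $\Phi\circ\text{key}(\beta)$ (for instance when the flag is too small to accommodate the forced column-strict entries, so that $\Phi\circ\text{key}(\beta)$ fails to dominate the minimal key $\text{key}(\beta^{\dagger})$), both sets are empty, which is the alternative in the statement. I expect the only genuine obstacle beyond the translation of the second paragraph to be the verification that the ``largest key below'' is well defined and realizes the correct threshold; the equality $\widehat\beta^{\dagger}=\beta^{\dagger}$ and the preservation of the plactic class then come for free from the framework set up in the first paragraph.
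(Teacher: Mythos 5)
You should first note that this paper offers no proof of the statement at all: it is quoted wholesale from \cite[Theorem 21]{RS}, so the only possible assessment is of your argument on its own terms, and on those terms it is an outline rather than a proof. Your first step is sound: with $Q$ pinned to $\text{key}(\alpha)$, bijectivity of the Burge correspondence makes $\mathbf{v}\mapsto P(\mathbf{v})$ injective and identifies Knuth classes with $P$-tableaux, so the theorem does reduce to showing $\mathcal{P}(\beta,\Phi)=\mathcal{P}(\widehat{\beta},\Phi_0)$. The gap is that your second step, the equivalence $P\in\mathcal{P}(\alpha,\Phi)\iff K_{+}(P)\le\Phi\circ\text{key}(\alpha)$, is exactly where the theorem lives, and you do not prove it: for $\Phi=\Phi_0$ it is the Lascoux--Sch\"utzenberger key theorem (the content of \cite[Proposition 5.6]{LS-keys}), and for a general flag it is, modulo your third step, a restatement of the very theorem being proved, so asserting it begs the question. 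Moreover, the ``standard fact'' you lean on does not exist in the form you need: the frank-word machinery computes $K_{+}$ from \emph{column} factorizations, whereas your words carry \emph{row} factorizations with a prescribed key recording tableau, and transporting that machinery is precisely the hard Lascoux--Sch\"utzenberger-type work; you acknowledge this (``this is where most of the work lies''), but acknowledging a gap does not discharge it.

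Second, your final step fails as literally stated. For a general cellwise threshold $F$, the set of keys entrywise $\le F$ need \emph{not} have a largest element, so ``the largest key of shape $\beta^{\dagger}$ dominated by $\Phi\circ\text{key}(\beta)$'' is not well defined without further argument. Concretely, take $F$ of shape $(2,1)$ with first row $(1,2)$ and second row $(3)$: both $\text{key}(2,0,1)$ (rows $(1,1)$ and $(3)$) and $\text{key}(1,2,0)$ (rows $(1,2)$ and $(2)$) lie below $F$ and are incomparable, while their entrywise maximum has columns $\{1,3\}$ and $\{2\}$, which are not nested, hence is not a key. So the existence of a maximum has to be \emph{proved} for the special fillings $\Phi\circ\text{key}(\beta)$ (constant on the level sets of $\text{key}(\beta)$, with $\Phi$ weakly increasing); that is substantive flag combinatorics, not the routine verification you expect to ``come for free,'' and together with the unproved second step it means both load-bearing claims of your proposal remain open. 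The strategy itself is not unreasonable --- it is consistent with small examples, and if both claims were established the theorem would follow as you indicate --- but as written the proposal reformulates \cite[Theorem 21]{RS} rather than proves it.
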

\begin{theorem}
\begin{upshape} \cite[Proposition 5.6]{LS-keys} \end{upshape}
Let $\alpha = w.\alpha ^{\dagger}$. Then the set $\mathcal{W}(\alpha, \Phi_0)$ has a one-to-one correspondence with the set $\mathcal{B}_{w}(\alpha^{\dagger})$ via $\mathbf{u} \mapsto P(\mathbf{u})$ where $ P(\mathbf{u})$ is the unique tableau that is Knuth equivalent to $\mathbf{u}$.
\end{theorem}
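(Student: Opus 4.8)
The strategy is to read the map $\mathbf{u}\mapsto P(\mathbf{u})$ as the insertion half of the Burge correspondence applied to $\begin{bmatrix}\mathbf{b}(\alpha)\\\mathbf{u}\end{bmatrix}$, to use injectivity of that correspondence for the easy direction, and to identify the image with $\mathcal{B}_w(\alpha^{\dagger})$ through the Lascoux--Schützenberger right-key description of Demazure crystals. First I would check the elementary properties. For $\mathbf{u}\in\mathcal{W}(\alpha,\Phi_0)$, the defining condition $(\begin{bmatrix}\mathbf{b}(\alpha)\\\mathbf{u}\end{bmatrix}\to\emptyset)=(-,\text{key}(\alpha))$ says that in Theorem~\ref{Theorem:Burge} the recording tableau is $\text{key}(\alpha)$, which has shape $\alpha^{\dagger}$; since $P(\mathbf{u})$ shares the shape of the recording tableau, $P(\mathbf{u})\in\Tab_n(\alpha^{\dagger})$, and because Burge insertion preserves content we get $\wt(P(\mathbf{u}))=\wt(\mathbf{u})$. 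Thus the map is well defined into $\Tab_n(\alpha^{\dagger})$ and weight-preserving. Injectivity is then immediate: the top row $\mathbf{b}(\alpha)$ is common to all elements of $\mathcal{W}(\alpha,\Phi_0)$ and the recording tableau is pinned to $\text{key}(\alpha)$, so by the bijectivity in Theorem~\ref{Theorem:Burge} the pair $(P(\mathbf{u}),\text{key}(\alpha))$ recovers the whole biword, hence $\mathbf{u}$.

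The heart of the argument is the identification of the image with $\mathcal{B}_w(\alpha^{\dagger})$. I would invoke the Lascoux--Schützenberger characterization that a tableau $T\in\Tab_n(\alpha^{\dagger})$ belongs to $\mathcal{B}_w(\alpha^{\dagger})$ exactly when its right key $K_+(T)$ is entrywise dominated by $\text{key}(\alpha)$, where $\alpha=w.\alpha^{\dagger}$. The task then becomes translating the two defining constraints on $\mathbf{u}$ into this single inequality. The standard-flag condition $(\Phi_0)_i=i$ forces, for each label $i$ in the top row, the paired bottom letters of $\mathbf{u}$ to be at most $i$; this bounds, column by column, how high a given value can be carried during insertion and is exactly what caps the entries of the right key. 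The requirement that the recording tableau equal $\text{key}(\alpha)$ fixes the order in which the columns of $P(\mathbf{u})$ are created, and via the known link between Burge recording data and right keys this order pins down $K_+(P(\mathbf{u}))$. Making these two observations precise, i.e.\ proving that $\mathbf{u}\in\mathcal{W}(\alpha,\Phi_0)$ if and only if $K_+(P(\mathbf{u}))\le\text{key}(\alpha)$, is the main obstacle, since it demands the compatibility of the right-key algorithm with both Burge insertion and the factorization of $\mathbf{u}$ into maximal row words.

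Granting this equivalence, both remaining directions follow. For any $\mathbf{u}\in\mathcal{W}(\alpha,\Phi_0)$ we get $K_+(P(\mathbf{u}))\le\text{key}(\alpha)$, so $P(\mathbf{u})\in\mathcal{B}_w(\alpha^{\dagger})$ and the image is contained in the Demazure crystal. Conversely, given $T\in\mathcal{B}_w(\alpha^{\dagger})$, I would apply the inverse Burge correspondence to the pair $(T,\text{key}(\alpha))$; the resulting biword has top row $\mathbf{b}(\alpha)$ (the content of $\text{key}(\alpha)$ listed in weakly decreasing order), its bottom row $\mathbf{u}$ splits into maximal row factors of lengths $\alpha_i$ according to the top labels, and the inequality $K_+(T)\le\text{key}(\alpha)$ is precisely the right-key condition guaranteeing that these factors obey the standard flag, so $\mathbf{u}\in\mathcal{W}(\alpha,\Phi_0)$ with $P(\mathbf{u})=T$. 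Hence the weight-preserving injection is onto $\mathcal{B}_w(\alpha^{\dagger})$, proving the claimed bijection. As a consistency check one recovers the character identity $\sum_{\mathbf{u}\in\mathcal{W}(\alpha,\Phi_0)}\characx^{\wt(\mathbf{u})}=\key_{\alpha}$ from the refined Demazure character formula $\sum_{T\in\mathcal{B}_w(\alpha^{\dagger})}\characx^{\wt(T)}=\key_{w.\alpha^{\dagger}}$.
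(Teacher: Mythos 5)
The paper does not prove this statement at all: it is imported verbatim as \cite[Proposition 5.6]{LS-keys}, so there is no internal proof to compare against, and your attempt has to stand on its own. It does not. The well-definedness and injectivity steps are fine (the recording tableau being $\text{key}(\alpha)$ pins the shape to $\alpha^{\dagger}$, and bijectivity of the Burge correspondence recovers $\mathbf{u}$ from $\bigl(P(\mathbf{u}),\text{key}(\alpha)\bigr)$), but these are bookkeeping. The substance of the proposition is the identification of the image, and there your argument assumes exactly what has to be proved: you state that $\mathbf{u}\in\mathcal{W}(\alpha,\Phi_0)$ if and only if $K_+(P(\mathbf{u}))\leq\text{key}(\alpha)$, concede it is ``the main obstacle,'' and then proceed by ``granting this equivalence.'' Once you also invoke the Lascoux--Sch\"utzenberger characterization $\mathcal{B}_w(\alpha^{\dagger})=\{T\in\Tab_n(\alpha^{\dagger}): K_+(T)\leq\text{key}(\alpha)\}$ (itself a theorem at least as deep as the one at hand), the unproven bridge is essentially a restatement of the proposition: nothing of mathematical content has been supplied beyond the two results you cite or assume. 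A genuine proof must actually establish the compatibility between Burge insertion, the flagged factorization, and right keys --- e.g.\ by tracking how the flag bound on $v^{(i)}$ propagates through column insertion into the columns of $K_+(P(\mathbf{u}))$ --- which is precisely what Lascoux--Sch\"utzenberger/Reiner--Shimozono do.

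There is also a secondary unjustified claim in your converse direction. Applying inverse Burge to $\bigl(T,\text{key}(\alpha)\bigr)$ does give a bottom word that splits into weakly increasing factors $v^{(i)}$ of lengths $\alpha_i$ (this much follows from the ordering conditions on biwords, since within a block of equal top letters the bottom letters weakly increase). But the definition of $\mathcal{W}(\alpha,\Phi_0)$ additionally requires each $v^{(i)}$ to be a \emph{maximal} row word, i.e.\ the last letter of $v^{(i)}$ must strictly exceed the first letter of $v^{(i-1)}$, and the biword ordering imposes no constraint whatsoever between consecutive blocks. You assert maximality in passing (``splits into maximal row factors''), but it is not automatic; it has to be deduced from the condition that the recording tableau is $\text{key}(\alpha)$, and that deduction is again part of the missing core argument rather than a formality.
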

For a skew shape $\lambda/\mu,$ we write $\mathbf{b}(\lambda/\mu)$ to denote the word $\mathbf{b}(\lambda-\mu)$.
Let $\Tab(\lambda/\mu, \Phi)$ be the set of all semi-standard Young tableaux in $\SVT(\lambda/\mu, \Phi)$.

A word $y = y_1 y_2 \cdots y_s$ is called a \textit{Yamanouchi word}~\cite[\S 5.2]{Fulton:yt} if, for every $t \geq 1$, the number of $i$’s appearing in $y_t \cdots y_s$ is at least the number of $(i+1)$’s appearing there, for all $i \geq 1$. For example, $3231211$ is a Yamanouchi word, whereas $3112$ is not.

We say a semi-standard Young tableau $R$ is $(\lambda/\mu, \Phi)$-compatible if there exists a unique tableau $T_0 \in \Tab(\lambda/\mu,\Phi)$ such that $r_{T_0}$ is Yamanouchi word along with the biword 
$ \begin{bmatrix}
           \mathbf{b}(\lambda/\mu) \\
           r_{T_0} \\
\end{bmatrix} $ 
corresponds to $ (T_{\sh(R)}, R)$ under the Burge correspondence, Theorem~\ref{Theorem:Burge}. Here $T_{\sh(R)}$ is the unique semi-standard Young tableau of shape and weight both equal to $\sh(R)$.

Given a $(\lambda/\mu, \Phi)$-compatible tableau $R,$ we define the following \cite[Appendix]{KRSV}
$$\mathcal{A}(R, \lambda/\mu,\Phi):=\{ T \in \Tab(\lambda/\mu,\Phi) :  
(\begin{bmatrix}
           \mathbf{b}(\lambda/\mu) \\
           r_T \\
\end{bmatrix} \rightarrow \emptyset) = (\rect (T), R)\},$$
where $\rect(T)$ denotes the unique semi-standard Young tableau Knuth equivalent to $r_{T}$.

Thus $\Tab(\lambda/\mu,\Phi)=\displaystyle\bigsqcup_{R}\mathcal{A}(R, \lambda/\mu,\Phi),$ where $R$ ranges over all $(\lambda/\mu, \Phi)$-compatible tableaux. We proved the following propositions in \cite{KRSV} which produces a Demazure crystal structure on $\Tab(\lambda/\mu, \Phi)$.
\begin{proposition}
\begin{upshape}
    \cite[Proposition A.7]{KRSV}
\end{upshape}
A bijection $\Omega$ exists between the sets $\mathcal{A}(R, \lambda/\mu, \Phi)$ and $ \mathcal{W}(\beta(R), \Phi)$ such that, if $ T \mapsto \Omega (T),$ then $r_T$ and $\Omega{(T)}$ are Knuth equivalent. Here $\beta(R)$ denotes the weight of the left key tableau $K_{\_}(R)$ of $R$.   
\end{proposition}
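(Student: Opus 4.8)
The statement is cited from the author's earlier work, but here is how I would set up and carry out a proof. (I read $\mathcal{A}(Q,\lambda/\mu,\Phi)$ as $\mathcal{A}(R,\lambda/\mu,\Phi)$, i.e. the compatible tableau is $R$ throughout, so that $\beta(R)$ is the weight of its left key $K_{\_}(R)$.)

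The plan is to realize $\Omega$ through the Burge correspondence, using the recording tableau $R$ as a fixed pivot. For $T\in\mathcal{A}(R,\lambda/\mu,\Phi)$ the biword $\begin{bmatrix}\mathbf{b}(\lambda/\mu)\\ r_T\end{bmatrix}$ Burge-inserts to $(\rect(T),R)$, so with $R$ held fixed the correspondence identifies $T$ (equivalently $r_T$) with the single insertion tableau $\rect(T)$. I would then \emph{define} $\Omega(T)$ to be the unique word of $\mathcal{W}(\beta(R),\Phi)$ whose insertion tableau is $\rect(T)$; this makes $\Omega(T)$ automatically Knuth equivalent to $r_T$, as required. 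The whole proposition thereby reduces to two matching problems: that the insertion tableaux $\rect(T)$ arising from $\mathcal{A}(R,\lambda/\mu,\Phi)$ coincide with the insertion tableaux $P(\mathbf{v})$ arising from $\mathcal{W}(\beta(R),\Phi)$, and that the skew-flag condition on $T$ corresponds, under the correspondence, to the flag condition defining $\mathcal{W}(\beta(R),\Phi)$.

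For well-definedness I would first pin down the shape. Since the recording tableau of $\begin{bmatrix}\mathbf{b}(\lambda/\mu)\\ r_T\end{bmatrix}$ is the fixed $R$, and insertion and recording tableaux share their shape, every $\rect(T)$ with $T\in\mathcal{A}(R,\lambda/\mu,\Phi)$ has shape $\sh(R)$. On the other side, because $K_{\_}(R)$ is a key tableau of weight $\beta(R)$ it equals $\text{key}(\beta(R))$, so $\sh(R)=\beta(R)^{\dagger}$; and by the second bullet in the definition of $\mathcal{W}$, every $\mathbf{v}\in\mathcal{W}(\beta(R),\Phi)$ has recording tableau $\text{key}(\beta(R))$, hence insertion tableau $P(\mathbf{v})$ of shape $\beta(R)^{\dagger}=\sh(R)$. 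Thus both $T\mapsto\rect(T)$ and $\mathbf{v}\mapsto P(\mathbf{v})$ inject their respective sets into semistandard Young tableaux of shape $\sh(R)$ (the latter is injective because all words of $\mathcal{W}(\beta(R),\Phi)$ share the top word $\mathbf{b}(\beta(R))$ and the recording tableau $\text{key}(\beta(R))$, so Burge recovers $\mathbf{v}$ from $P(\mathbf{v})$, one word per Knuth class). It then remains only to verify that the word built from $\rect(T)$ genuinely lies in $\mathcal{W}(\beta(R),\Phi)$, i.e. satisfies the flag bound.

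The inverse runs the same machinery backwards: from $\mathbf{v}\in\mathcal{W}(\beta(R),\Phi)$ take $P=P(\mathbf{v})$, pair it with the fixed recording tableau $R$ (the shapes agree), and apply inverse Burge; the top word is forced to be $\mathbf{b}(\lambda/\mu)$ since $R$ has content $\lambda-\mu$ by $(\lambda/\mu,\Phi)$-compatibility, and the bottom word is the reading word $r_T$ of a unique filling $T$ of shape $\lambda/\mu$, which one checks is semistandard. Since both maps only move within a single Knuth class and Burge with a fixed recording tableau is a bijection, the two are mutually inverse. The main obstacle, and the technical heart of the argument, is the flag bookkeeping: one must prove that ``$T$ respects $\Phi$'' (entries of row $i$ at most $\Phi_i$) is \emph{equivalent}, under the Burge correspondence with top word $\mathbf{b}(\lambda/\mu)$, to ``each letter of $v^{(i)}$ is at most $\Phi_i$''. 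This does not follow from the bijection alone, because passing from $r_T$ to its canonical row-word rearrangement of content $\beta(R)$ redistributes letters across the blocks $v^{(i)}$; controlling it requires tracking how bumping during insertion interacts with the row labels recorded by $\mathbf{b}(\lambda/\mu)$, and this is exactly where the flag-compatibility result of Reiner--Shimozono (Theorem~\ref{theorem:RS-hat}) together with the explicit column structure of $K_{\_}(R)$ is needed. I expect this equivalence, rather than the set-up of the bijection, to be the crux.
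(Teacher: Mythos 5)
The paper itself does not prove this proposition; it is imported wholesale from \cite{KRSV} (Proposition A.7), so there is no internal argument to compare against, and your attempt has to stand on its own. As a piece of scaffolding it is sound: reading $Q$ as $R$, fixing the recording tableau $R$, and defining $\Omega$ as the composite of $T \mapsto \rect(T)$ with the inverse of $\mathbf{v} \mapsto P(\mathbf{v})$ is the natural framing; your shape bookkeeping ($\sh(R)=\sh(K_{\_}(R))=\beta(R)^{\dagger}$) and both injectivity claims (inverse Burge with a fixed recording tableau) are correct, and Knuth equivalence of $r_T$ and $\Omega(T)$ is then automatic.

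The problem is that this framing reduces the proposition exactly to its own hard core and then stops. What remains after your reduction is the statement that the two sets of insertion tableaux coincide, and this contains \emph{two} unproven claims, not one. First, even with the flag removed, you must show that the Knuth class of $r_T$ contains a word factoring as $v^{(n)}\cdots v^{(1)}$ into \emph{maximal} row words whose lengths are precisely $\beta(R)_n,\dots,\beta(R)_1$, the weight of the left key. Nothing in your argument touches why the weight of $K_{\_}(R)$ is the correct length vector; that identification (the left key governs which row-factorization contents are achievable within a Knuth class) is the central theorem of the Reiner--Shimozono theory that this proposition encodes, and you take it on faith from the statement being proved. Second, the flag equivalence, which you do name as the crux but only gesture at via Theorem~\ref{theorem:RS-hat}. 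In addition, your inverse map has a smaller unproved step: that inverse Burge applied to $(P(\mathbf{v}),R)$ yields a \emph{column-strict} filling of $\lambda/\mu$. This is not automatic from $R$ having content $\lambda-\mu$; it requires the fact that column-strictness of the inverse image depends only on the recording tableau (so that compatibility of $R$, witnessed by the single tableau $T_0$, propagates to every insertion tableau $P(\mathbf{v})$). So the proposal is a correct reduction, but the mathematical content of \cite[Proposition A.7]{KRSV} --- the left-key length identification, the flag transfer, and the column-strictness of the inverse --- is missing.
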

\begin{proposition}
\begin{upshape}
    \cite[Proposition A.9]{KRSV}
\end{upshape}
\label{proposition:tableau}
The rectification map $ \rect: \mathcal{A}(R, \lambda/\mu,\Phi) \rightarrow  \mathcal{B}_{w}(\widehat{\beta(R)}^{\dagger})$ is a weight-preserving bijection which intertwines the crystal raising and lowering operators. Here $w$ is any permutation such that $w. \widehat{\beta(R)}^{\dagger}=\widehat{\beta(R)}$.   
\end{proposition}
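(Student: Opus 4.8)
The plan is to realize $\rect$ as a composite of the three bijections already available and then to verify, in turn, that the composite is literally rectification, that it preserves weight, and that it commutes with the crystal operators. Write $\beta=\beta(R)$, and let $\widehat{\beta}$ together with the Knuth-preserving bijection $\zeta\colon\mathcal{W}(\beta,\Phi)\to\mathcal{W}(\widehat{\beta},\Phi_0)$ be produced by Theorem~\ref{theorem:RS-hat} (the case $\mathcal{W}(\beta,\Phi)=\emptyset$ being vacuous). Applying \cite[Proposition~5.6]{LS-keys} with $\alpha=\widehat{\beta}$ and $w$ chosen so that $w.\widehat{\beta}^{\dagger}=\widehat{\beta}$ gives a bijection $P\colon\mathcal{W}(\widehat{\beta},\Phi_0)\to\mathcal{B}_{w}(\widehat{\beta}^{\dagger})$ sending a word to its unique Knuth-equivalent tableau. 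Together with the bijection $\Omega\colon\mathcal{A}(R,\lambda/\mu,\Phi)\to\mathcal{W}(\beta,\Phi)$ of \cite[Proposition~A.7]{KRSV} this yields the chain
\[
\mathcal{A}(R,\lambda/\mu,\Phi)\xrightarrow{\ \Omega\ }\mathcal{W}(\beta,\Phi)\xrightarrow{\ \zeta\ }\mathcal{W}(\widehat{\beta},\Phi_0)\xrightarrow{\ P\ }\mathcal{B}_{w}(\widehat{\beta}^{\dagger}).
\]
Each of $\Omega,\zeta,P$ keeps its input inside a single Knuth class, so for $T\in\mathcal{A}(R,\lambda/\mu,\Phi)$ the tableau $P(\zeta(\Omega(T)))$ is the unique semistandard Young tableau Knuth equivalent to $r_T$, that is, $\rect(T)$. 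Hence the composite is exactly $\rect$, which shows in one stroke that $\rect$ is a well-defined bijection of $\mathcal{A}(R,\lambda/\mu,\Phi)$ onto the Demazure crystal $\mathcal{B}_{w}(\widehat{\beta}^{\dagger})$. Weight preservation is then automatic, since passing to a Knuth-equivalent word never changes the content: $\wt(\rect(T))=\wt(r_T)=\wt(T)$.

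The substantive part is the intertwining with $e_i,f_i$, and for this I would not work through $\Omega,\zeta,P$ individually (each is only known to preserve Knuth class, which by itself does not force commutation with the operators) but instead use the identity $\rect(T)=P(r_T)$, where $P$ now denotes Schützenberger insertion of the reading word. The argument has two standard inputs. First, I would check that the crystal on $\Tab(\lambda/\mu)$ defined in \S\ref{Section 3} agrees with the reading-word crystal: for an honest tableau every box is a singleton, so the column $+/-$ signature of \S\ref{Section 3} coincides with the tensor-product signature rule of Lemma~\ref{lemma:signature} applied to $r_T$, whence $T\mapsto r_T$ is a strict embedding of $\Tab(\lambda/\mu)$ into $\mathbb{W}_n^{\otimes N}$, where $N$ is the number of boxes of $\lambda/\mu$. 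Second, I would invoke that insertion $\mathbf{u}\mapsto P(\mathbf{u})$ is a strict crystal morphism $\mathbb{W}_n^{\otimes N}\to\bigsqcup_{\nu}\Tab(\nu)$ (equivalently, the crystal operators descend to the plactic monoid). Composing the two, $T\mapsto\rect(T)$ commutes with $e_i$ and $f_i$ wherever they are defined.

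Finally I would reconcile this with the two truncations present in the statement. Since Burge insertion leaves the recording tableau fixed under crystal operators, the ambient class $\{T\in\Tab(\lambda/\mu):\ \text{recording tableau}=R\}$ is a full subcrystal, and the chain above shows that its $\Phi$-flagged part is carried bijectively onto the Demazure crystal $\mathcal{B}_{w}(\widehat{\beta}^{\dagger})$. As Demazure crystals are stable under the raising operators, $\rect(e_iT)=e_i\rect(T)$ holds with no restriction, while $\rect(f_iT)=f_i\rect(T)$ holds exactly when $f_iT$ still respects $\Phi$, which by the bijection corresponds precisely to $f_i\rect(T)\neq0$ in $\mathcal{B}_{w}(\widehat{\beta}^{\dagger})$. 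I expect the only delicate point to be this matching of truncations: verifying that the flag condition on the source cuts out exactly the subset that rectification sends to the Demazure crystal on the target, and that the \S\ref{Section 3} operators therefore restrict consistently on the two sides. Once the reading-word identification and the crystal-morphism property of insertion are in place, the remaining verification is bookkeeping.
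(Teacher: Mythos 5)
Your proof is correct and takes essentially the same route as the paper: the proposition is imported from \cite[Proposition A.9]{KRSV} without reproof, and its proof is exactly the composition $P\circ\zeta\circ\Omega$ of the three bijections the paper assembles immediately beforehand (Proposition A.7 of \cite{KRSV}, Theorem 21 of \cite{RS}, and Proposition 5.6 of \cite{LS-keys}), identified with $\rect$ via Knuth equivalence. Your treatment of the intertwining --- the reading-word embedding of the tableau crystal, insertion as a strict crystal morphism fixing the recording tableau, and the matching of the flag truncation on the source with the Demazure truncation on the target --- is the same standard mechanism underlying the cited proof.
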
 
If $\theta, \theta'$ are two skew shapes, we let $\theta * \theta'$ be the skew shape obtained by putting $\theta$ and $\theta'$ corner to corner as shown below:
$$
\begin{tikzpicture}[scale=0.9]
    \draw (0,0) -- (0,0.7);
    \draw (0.7,1.4)--(2.1,1.4);
    \draw (0,0)-- (0.7,0) -- (0.7,1.4);
    \draw (0.7,0)-- (1.4,0) -- (1.4,1.4);
    \draw (0,0.7) -- (2.1,0.7) -- (2.1,1.4);
    \node at (-0.6,0.7) {$\theta=$};
    \node at (0,-0.35) {$\null$};
\end{tikzpicture}
\begin{tikzpicture}
    \node at (0,0) {$\null$};
    \node at (0.5,0) {$\null$}; 
\end{tikzpicture}
\begin{tikzpicture}[scale=0.9]
  \draw (0,0)--(1.4,0)--(1.4,1.4);
  \draw (0,0)--(0,0.7)--(1.4,0.7);
  \draw (0.7,0)--(0.7,1.4)--(1.4,1.4);
  \node at (-0.6,0.7) {$\theta'=$};
  \node at (0,-0.35) {$\null$};
\end{tikzpicture}
\begin{tikzpicture}
    \node at (0,0) {$\null$};
    \node at (0.5,0.5) {$\implies$};
    \node at (0,-0.35) {$\null$};
\end{tikzpicture}
\begin{tikzpicture}
    \draw (0,0) -- (0,0.7);
    \draw (0.7,1.4) -- (2.1,1.4);
    \draw (0,0) -- (0.7,0) -- (0.7,1.4);
    \draw (0.7,0) -- (1.4,0) -- (1.4,1.4);
    \draw (0,0.7) -- (2.1,0.7) -- (2.1,1.4);
    \node at (-1,0.7+0.35) {$\theta * \theta' =$};
    \draw (0+2.1,0+1.4)--(1.4+2.1,0+1.4)--(1.4+2.1,1.4+1.4);
    \draw (0+2.1,0+1.4)--(0+2.1,0.7+1.4)--(1.4+2.1,0.7+1.4);
    \draw (0.7+2.1,0+1.4)--(0.7+2.1,1.4+1.4)--(1.4+2.1,1.4+1.4);
\end{tikzpicture}
$$
If $T,T'$ are semi-standard Young tableaux of shapes $\theta, \theta'$ respectively then we define another semi-standard Young tableau $T*T'$ of skew shape $\theta*\theta'$ as follows:
$$
\begin{tikzpicture}[scale=0.9]
    \draw (0,0) -- (0,0.7);
    \draw (0.7,1.4)--(2.1,1.4);
    \draw (0,0)-- (0.7,0) -- (0.7,1.4);
    \draw (0.7,0)-- (1.4,0) -- (1.4,1.4);
    \draw (0,0.7) -- (2.1,0.7) -- (2.1,1.4);
    \node at (0.35,0.35) {$2$};
    \node at (1.05,0.35) {$3$};
    \node at (1.05,1.05) {$1$};
    \node at (1.75,1.05) {$1$};
    \node at (-0.6,0.7) {$T=$};
    \node at (0,-0.35) {$\null$};
\end{tikzpicture}
\begin{tikzpicture}
    \node at (0,0) {$\null$};
    \node at (0.5,0) {$\null$}; 
\end{tikzpicture}
\begin{tikzpicture}[scale=0.9]
  \draw (0,0)--(1.4,0)--(1.4,1.4);
  \draw (0,0)--(0,0.7)--(1.4,0.7);
  \draw (0.7,0)--(0.7,1.4)--(1.4,1.4);
  \node at (0.35,0.35) {$3$};
  \node at (1.05,0.35) {$4$};
  \node at (1.05,1.05) {$2$};
  \node at (-0.6,0.7) {$T'=$};
  \node at (0,-0.35) {$\null$};
\end{tikzpicture}
\begin{tikzpicture}
    \node at (0,0) {$\null$};
    \node at (0.5,0.5) {$\implies$};
    \node at (0,-0.35) {$\null$};
\end{tikzpicture}
\begin{tikzpicture}
\draw (0,0) -- (0,0.7);
    \draw (0.7,1.4)--(2.1,1.4);
    \draw (0,0)-- (0.7,0) -- (0.7,1.4);
    \draw (0.7,0)-- (1.4,0) -- (1.4,1.4);
    \draw (0,0.7) -- (2.1,0.7) -- (2.1,1.4);
    \node at (0.35,0.35) {$2$};
    \node at (1.05,0.35) {$3$};
    \node at (1.05,1.05) {$1$};
    \node at (1.75,1.05) {$1$};
    \node at (-1,0.7+0.35) {$T * T'=$};
    \draw (0+2.1,0+1.4)--(1.4+2.1,0+1.4)--(1.4+2.1,1.4+1.4);
    \draw (0+2.1,0+1.4)--(0+2.1,0.7+1.4)--(1.4+2.1,0.7+1.4);
    \draw (0.7+2.1,0+1.4)--(0.7+2.1,1.4+1.4)--(1.4+2.1,1.4+1.4);
    \node at (0.35+2.1,0.35+1.4) {$3$};
    \node at (1.05+2.1,0.35+1.4) {$4$};
    \node at (1.05+2.1,1.05+1.4) {$2$};
    \node at (3.5,2.8+0.35) {$\null$};
\end{tikzpicture}
$$
Given skew shapes $ \theta_i$ for $i=1,2,3,$ we denote $\theta_3 * (\theta_2 * \theta_1)$ as $\theta_3 * \theta_2 *\theta_1$. Also, if $T_i$ are semi-standard Young tableaux of shape $\theta_i $ for $i=1,2,3$ then we denote $T_3*(T_2*T_1)$ as $T_3*T_2*T_1$. 

\begin{definition}
The \emph{reading word} $w(S)$ of a set-valued tableau $S$ is the word obtained by reading each row of $S$, starting from the bottom row, according to the following procedure, and then continuing up the rows. In each row, we first ignore the smallest entry of each box, and read the remaining entries from right to left and from largest to smallest within each cell. Then we read the smallest entry of each cell from left to right. 
\end{definition}
\begin{example}
$S=$ $  \ytableausetup{mathmode,
notabloids}
  \begin{ytableau}
    1&1,2&2,3\\2,3&4   
  \end{ytableau}
$ is a set-valued tableau of shape $(3,2,0)$ with $w(S)=32432112$.
\end{example}

For a skew shape $\lambda/\mu$ $(\lambda,\mu \in \mathcal{P}[n])$ and $\mathbf{e} \in \mathbb{Z}_{+}^{n},$ we define a new skew shape as below
$$\sigma^{\mathbf{e}}_{\lambda/\mu}:=(\lambda_n-\mu_n,1^{e_n}) *\cdots * (\lambda_2-\mu_2,1^{e_2}) * (\lambda_1-\mu_1,1^{e_1}). $$
Then each semi-standard set-valued tableau $T$ of shape $\lambda/\mu$ with excess $\mathbf{e}$ corresponds to a unique semi-standard Young tableau $\Tilde{T} =\Tilde{T_n} * \cdots *\Tilde{T_2} *\Tilde{T_1}$ of shape $\sigma^{\mathbf{e}}_{\lambda/\mu}$, where each $\Tilde{T_i}$ is a semi-standard Young tableau such that $r_{\Tilde{T_i}}$ is the row reading word of $i^{th}$ row $T_i$ of $T$ and $\sh(\Tilde{T_i})=(\lambda_i-\mu_i, 1^{e_i})$ for $1 \leq i \leq n$.
\begin{example}
    Let us assume
$$
\begin{tikzpicture}[scale=1.2]
  \draw (0,0)--(1.6,0)--(1.6,1.6)--(2.4,1.6);
  \draw (0,0)--(0,0.8)--(2.4,0.8)--(2.4,1.6);
  \draw (0.8,0)--(0.8,1.6)--(1.6,1.6);
  \node at (0.4,0.4) {$2,\textcolor{blue}{3}$};
  \node at (1.2,0.4) {$3,\textcolor{blue}{4}$};
  \node at (1.2,1.2) {$1,\textcolor{blue}{2}$};
  \node at (2,1.2) {$2,\textcolor{blue}{3},\textcolor{blue}{4}$};
  \node at (-0.4,0.8) {$T=$};
\end{tikzpicture}
$$
Then we have
$$
\begin{tikzpicture}[scale=1.2]
    \draw (0,0)--(0,0.8)--(1.6,0.8)--(1.6,0)--(0,0);
    \draw (0.8,0)--(0.8,0.8);
    \node at (0.4,0.4) {$1,\textcolor{blue}{2}$}; 
    \node at (1.2,0.4) {$2,\textcolor{blue}{3},\textcolor{blue}{4}$};
    \node at (-0.4,0.4) {$T_1=$};
\end{tikzpicture}
\begin{tikzpicture}
    \node at (0,0) {$\null$};
    \node at (0.5,0) {$\null$}; 
\end{tikzpicture}
\begin{tikzpicture}[scale=1.2]
    \draw (0,0)--(0,0.8)--(1.6,0.8)--(1.6,0)--(0,0);
    \draw (0.8,0)--(0.8,0.8);
    \node at (0.4,0.4) {$2,\textcolor{blue}{3}$}; 
    \node at (1.2,0.4) {$3,\textcolor{blue}{4}$};
    \node at (-0.4,0.4) {$T_2=$};
\end{tikzpicture}
$$
Therefore,
$$ \ytableausetup{mathmode,
notabloids}
\Tilde{T_1}= \begin{ytableau}
     1 &2\\ \textcolor{blue}{2} \\ \textcolor{blue}{3} \\ \textcolor{blue}{4}  
\end{ytableau}\text{\hspace{0.5 cm} and \hspace{0.5 cm}}
\ytableausetup{mathmode,
notabloids}
\Tilde{T_2}=\begin{ytableau}
      2 & 3\\ \textcolor{blue}{3} \\\textcolor{blue}{4}
\end{ytableau} \implies 
\ytableausetup{mathmode,
notabloids}
\Tilde{T_2}*\Tilde{T_1} = \begin{ytableau}
     \none & \none & 1 &2\\
     \none & \none & \textcolor{blue}{2} \\ \none & \none & \textcolor{blue}{3} \\ 
     \none & \none & \textcolor{blue}{4} \\  
      2 & 3\\ \textcolor{blue}{3} \\\textcolor{blue}{4}
\end{ytableau}$$
\end{example}
For $\alpha=(\alpha_1,\dots,\alpha_m) \in \mathbb{Z}^m _{+},\beta=(\beta_1,\dots,\beta_n) \in \mathbb{Z}^n _{+},$ we write $ \alpha * \beta =(\alpha_1,\dots,\alpha_m,\beta_1,\dots,\beta_n)$. For positive integers $s,t,$ we define $s^t:=(s,s,\dots,s) \in \mathbb{Z}^t_{+}$. Let $ \SVT_{\mathbf{e}}(\lambda/\mu, \Phi)$ be the set of all set-valued tableaux in $\SVT(\lambda/\mu, \Phi)$ with excess $\mathbf{e}$. Then if $T \in \SVT_{\mathbf{e}}(\lambda/\mu, \Phi)$ and $\Tilde{T}=\Tilde{T_n}*\cdots * \Tilde{T_2} * \Tilde{T_1}$ then $\Tilde{T_i}$ respects the flag $\phi_i^{e_i+1}$ for $1 \leq i \leq n$. Thus $\Tilde{T}\in \Tab(\sigma^{\mathbf{e}}_{\lambda/\mu},\Phi^{\mathbf{e}}),$ where $\Phi^{\mathbf{e}}:=\phi_1^{e_1+1}*\phi_2^{e_2+1}*\cdots * \phi_n^{e_n+1}$.

Let $P$ be a highest weight element in $\SVT(\lambda/\mu, \Phi)$ and $\ex(P)=\mathbf{e}$. Then $\Tilde{P}=\Tilde{P_n} * \cdots * \Tilde{P_2} *\Tilde{P_1}$ and $\sh(\Tilde{P})=\sigma^{\mathbf{e}}_{\lambda/\mu}$. Now if $(
\begin{bmatrix}
      \mathbf{b}(\sigma^{\mathbf{e}}_{\lambda/\mu}) \\
       r_{\Tilde{P}} \\
\end{bmatrix}
\rightarrow \emptyset)=(\rect (\Tilde{P}), \hat{P})$ (Theorem~\ref{Theorem:Burge}), we say $\hat{P}$ is a $(\lambda/\mu,\Phi)$-compatible tableau for SVT. We also define $\overline{\ex}(\hat{P}):=\ex(P)=\mathbf{e}$.
\begin{example}
    Let $\lambda = (4,2,2), \mu =(2,1,0), \mathbf{e}=(1,1,0), \Phi=(2,2,3)$. Also, we take
$$ P= \ytableausetup{mathmode,
notabloids}
\begin{ytableau}
    \none & \none &1 &1,2\\ \none & 1,2 \\ 1 &3  
\end{ytableau} \in \SVT_{\mathbf{e}}(\lambda/\mu, \Phi)
$$
It is easy to see that $P$ is a highest weight element of $\SVT_{\mathbf{e}}(\lambda/\mu, \Phi)$.
Also, $\Tilde{P}=\Tilde{P_3} * \Tilde{P_2} * \Tilde{P_1}$, where 
$$
\Tilde{P_1}= \ytableausetup{mathmode,
notabloids}
\begin{ytableau}
    1 &1\\2  
\end{ytableau} \hspace{1 cm} 
\Tilde{P_2}= \ytableausetup{mathmode,
notabloids}
\begin{ytableau}
    1 \\ 2  
\end{ytableau} \hspace{1 cm}
\Tilde{P_3}= \ytableausetup{mathmode,
notabloids}
\begin{ytableau}
    1 & 3  
\end{ytableau}$$
Now, $$(
\begin{bmatrix}
           \mathbf{b}(\sigma^{\mathbf{e}}_{\lambda/\mu}) \\
           r_{\Tilde{P}} \\
           
\end{bmatrix} \rightarrow \emptyset)
=(\ytableausetup{mathmode,
notabloids}
  \begin{ytableau}
    1&1&1&1\\2 &2 \\3  
  \end{ytableau},\ytableausetup{mathmode,
notabloids}
  \begin{ytableau}
    1&1&3&5\\2 &4\\5
  \end{ytableau} ).$$
So $\hat{P} = \ytableausetup{mathmode,
notabloids}
\begin{ytableau}
    1&1&3&5\\2 &4\\5
  \end{ytableau} $ is a $(\lambda/\mu, \Phi)$-compatible tableau for SVT such that $\overline{\ex}(\hat{P})=\ex(P)=(1,1,0)$.
\end{example}

Let $P$ be any highest weight element of $\SVT_{\mathbf{e}}(\lambda/\mu, \Phi)$ and $ \SVT_{\mathbf{e}}(\lambda/\mu, \Phi;P)$ be the connected component of the crystal graph of $\SVT_{\mathbf{e}}(\lambda/\mu, \Phi)$ containing $P$. Then 
\begin{equation}
\label{eq:Demazure:excess}
\SVT_{\mathbf{e}}(\lambda/\mu, \Phi) = \bigsqcup_{P} \SVT_{\mathbf{e}}(\lambda/\mu, \Phi;P),    
\end{equation}
where $P$ varies over the set of all highest weight elements in $\SVT_{\mathbf{e}}(\lambda/\mu, \Phi)$.

Now we show that $\SVT_{\mathbf{e}}(\lambda/\mu, \Phi;P)$ is isomorphic to a Demazure crystal, which implies $\SVT_{\mathbf{e}}(\lambda/\mu, \Phi)$ admits a Demazure crystal structure. In order to do that we define the following map 
$$\Psi : \SVT_{\mathbf{e}}(\lambda/\mu, \Phi;P) \rightarrow \mathcal{A}(\hat{P}, \sigma^{\mathbf{e}}_{\lambda/\mu},\Phi^{\mathbf{e}}) \text{ by }
S \mapsto \Tilde{S} =\Tilde{S_n} * \cdots * \Tilde{S_2}*\Tilde{S_1}.$$
Then the following proposition and Proposition~\ref{proposition:tableau} provides a Demazure crystal structure on $\SVT_{\mathbf{e}}(\theta, \Phi)$.

\begin{proposition}
\label{Proposition:main}
The map $\Psi : \SVT_{\mathbf{e}}(\lambda/\mu, \Phi;P)  \rightarrow \mathcal{A}(\hat{P}, \sigma^{\mathbf{e}}_{\lambda/\mu},\Phi^{\mathbf{e}})$ defined by $ \Psi(S)=\Tilde{S}$ is a weight-preserving bijection which intertwines the crystal raising and lowering operators.
\end{proposition}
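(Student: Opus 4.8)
The plan is to verify the three asserted properties in turn --- weight preservation, compatibility with $e_i,f_i$, and bijectivity --- reducing each to the single-component maps $\Psi_{\Tilde{\theta_i},Q_i''}^{\theta_i}$ and reassembling through the $*$-decomposition $S=S_k*\cdots*S_1$. The first two are essentially contained in the paragraph preceding the statement: regarding $S=S_k\otimes\cdots\otimes S_1$ and $\Tilde S=\Tilde{S_k}\otimes\cdots\otimes\Tilde{S_1}$ as tensor products, Proposition~\ref{propostion:crystal-morphism} gives $\wt(S_j)=\wt(\Tilde{S_j})$, $\varphi_i(S_j)=\varphi_i(\Tilde{S_j})$ and $\varepsilon_i(S_j)=\varepsilon_i(\Tilde{S_j})$ for all $i,j$; summing weights yields $\wt(\Psi(S))=\wt(S)$, and feeding the matched $\varphi_i,\varepsilon_i$ into the signature rule of Lemma~\ref{lemma:signature} yields $\Psi(e_iS)=e_i\Psi(S)$ and $\Psi(f_iS)=f_i\Psi(S)$ whenever both sides are defined. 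In particular $\Psi(S')=\Tilde{S'}$ is itself a highest weight element, since $e_i\Tilde{S'}=\Psi(e_iS')=0$ for every $i$.

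The heart of the argument is bijectivity, which I would split into injectivity and surjectivity. For injectivity, fix $\tau=\theta_i$ (so that $T'=\emptyset$) and the output data $(\Tilde{\theta_i},Q_i'')$; then Proposition~\ref{algorithm:bijection} makes $S_i\mapsto\Tilde{S_i}$ a bijection onto its image, hence injective. Since the connected components of $\Tilde\theta=\Tilde{\theta_k}*\cdots*\Tilde{\theta_1}$ are precisely the $\Tilde{\theta_i}$, the tableau $\Tilde S$ recovers each $\Tilde{S_i}$ and therefore each $S_i$, so $\Psi$ is injective. For surjectivity I would invoke Proposition~\ref{proposition:tableau}: rectification identifies $\mathcal{A}(R',\Tilde\theta,\Tilde\Phi)$ with the Demazure crystal $\mathcal{B}_w(\widehat{\beta(R')}^{\dagger})$, which is connected and is generated from its unique highest weight element by the lowering operators. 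Because $\Tilde{S'}=\Psi(S')$ is that highest weight element and $\Psi$ intertwines the operators, writing any $U\in\mathcal{A}(R',\Tilde\theta,\Tilde\Phi)$ as $U=f_{i_1}\cdots f_{i_m}\Tilde{S'}$ and lifting the path to the set-valued side produces a candidate preimage $S=f_{i_1}\cdots f_{i_m}S'$ with $\Psi(S)=U$.

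The step I expect to be the main obstacle is ensuring that this lifted preimage $S$ genuinely lies in the connected component $D:=\SVT_{\mathbf{e}}(\theta,\Phi;\Tilde{\theta},Q'';S')$ --- equivalently, that the connected-component decomposition on the set-valued side matches $\Tab(\Tilde\theta,\Tilde\Phi)=\bigsqcup_{R'}\mathcal{A}(R',\Tilde\theta,\Tilde\Phi)$ on the tableau side, and that no intermediate application of $f_{i_j}$ ever forces the flag $\Phi$ to be violated. This is exactly where the flag bookkeeping of \S\ref{subsection:flag} enters: running the inverse algorithm componentwise on $U=U_k*\cdots*U_1$ and tracking the flags $\Phi^{(j)}$ shows that $U_i$ respecting $\Tilde{\phi_i}$ forces the recovered $S_i$ to respect $\phi_i$, so $S$ respects $\Phi$ and stays within the same component as $S'$. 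Together with injectivity the lift is unique, and hence $\Psi$ restricts to a weight-preserving bijection $D\to\mathcal{A}(R',\Tilde\theta,\Tilde\Phi)$ intertwining $e_i$ and $f_i$, which is the assertion.
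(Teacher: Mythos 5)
Your proposal is correct, and three of its four ingredients coincide with the paper's proof: weight preservation, the intertwining of $e_i,f_i$ via Proposition~\ref{propostion:crystal-morphism} fed into the signature rule of Lemma~\ref{lemma:signature}, and injectivity by fixing $(\Tilde{\theta_i},Q_i'')$ and invoking Proposition~\ref{algorithm:bijection} componentwise. Where you genuinely diverge is surjectivity. You use Proposition~\ref{proposition:tableau} to see $\mathcal{A}(R',\Tilde{\theta},\Tilde{\Phi})$ as a Demazure crystal generated from its unique highest weight element $\Tilde{S'}$ by lowering operators, and you lift that path through $\Psi$, so membership of the lift $S=f_{i_1}\cdots f_{i_m}S'$ in the component of $S'$ is (essentially) automatic and the residual issue is flag compliance, which you settle by running the inverse insertion and citing \S\ref{subsection:flag}. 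The paper argues in the opposite order: it starts from an arbitrary $\Tilde{T}\in\mathcal{A}(R',\Tilde{\theta},\Tilde{\Phi})$, builds the preimage $T$ directly by the inverse of Proposition~\ref{algorithm:bijection} (so flag compliance is immediate from \S\ref{subsection:flag}), and then spends its effort identifying the component of $T$: if $S''$ is the highest weight element of that component, with recording tableau $R''$, then disjointness of the sets $\mathcal{A}(\cdot,\Tilde{\theta},\Tilde{\Phi})$ forces $R'=R''$, the Burge bijection together with the fact that highest weight elements rectify to the same tableau forces $\Tilde{S'}=\Tilde{S''}$, and injectivity of $\Psi$ gives $S'=S''$. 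The paper's route keeps the proposition self-contained at the level of the insertion bijection (Proposition~\ref{proposition:tableau} enters only afterwards, when Theorem~\ref{theorem:main} is assembled), while your route makes the crystal-theoretic mechanism more transparent but imports the Demazure structure of the target into the proof. One step in your version should be made explicit: to place $S$ in the connected component of $S'$ inside the \emph{flagged} set you need every intermediate lift $f_{i_j}\cdots f_{i_m}S'$, not just the endpoint, to respect $\Phi$; this follows either by applying your inverse-insertion argument to each intermediate tableau of the Demazure path (each lies in $\mathcal{A}(R',\Tilde{\theta},\Tilde{\Phi})$), or by noting that raising operators only decrease entries and hence preserve flags, so the flagged components are exactly the ambient components intersected with $\SVT(\theta,\Phi)$.
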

\begin{proof}
First we check the map $\Psi$ is well-defined. Let $S \in \SVT_{\mathbf{e}}(\lambda/\mu, \Phi;P)$ and $S_i$ be the $i^{th}$ row of $S$. Also, let $\Tilde{S}=\Tilde{S_n}* \cdots * \Tilde{S_2}*\Tilde{S_1}$. Then using \cite[Remark 3.7]{Travis:symmetric},
$\varphi_i(S_j)= \varphi_i(\Tilde{S_j}), \varepsilon_i(S_j)=\varepsilon_i(\Tilde{S_j}) $ for $1 \leq i \leq n-1$ and $1 \leq j \leq n$. Then considering $S$ as $ S_n \otimes \cdots \otimes S_2  \otimes S_1$ and using \cite[Remark 3.7]{Travis:symmetric},
Lemma~\ref{lemma:signature}, we can say that $\Psi$ commutes with $ e_i,f_i$. Now $S=f_{i_1}^{k_1}f_{i_2}^{k_2}\cdots f_{i_t}^{k_t}(P)$. Thus, using \cite[Proposition 29]{RS}, $\Psi(S)=\Tilde{S}=f_{i_1}^{k_1}f_{i_2}^{k_2}\cdots f_{i_t}^{k_t}(\Tilde{P}) \in \mathcal{A}(\hat{P}, \sigma^{\mathbf{e}}_{\lambda/\mu},\Phi^{\mathbf{e}})$. Hence the map $\Psi$ is well-defined. It is clear that $\Psi$ is weight preserving, i.e., $\wt(S)=\wt(\Psi(S))$, for any $ S \in\SVT_{\mathbf{e}}(\lambda/\mu, \Phi;P)$.

Let $\Psi(S)=\Psi(T) \implies \Tilde{S_n} * \cdots * \Tilde{S_2} * \Tilde{S_1}= \Tilde{T_n} * \cdots * \Tilde{T_2} * \Tilde{T_1} \implies \Tilde{S_i}=\Tilde{T_i}$ for all $i \in [n]$. Hence $S_i=T_i$ for $1 \leq i \leq n$. Therefore $S=T$. So $\Psi$ is injective. Let $\Tilde{T} \in \mathcal{A}(\hat{P}, \sigma^{\mathbf{e}}_{\lambda/\mu},\Phi^{\mathbf{e}})$. Then $\Tilde{T}=f_{j_1}^{l_1}f_{j_2}^{l_2}\cdots f_{j_t}^{l_t}(\Tilde{P})$. Let $\SVT^{\mathbf{e}}_n (\lambda/\mu)$ denote the set of all set-valued tableaux with excess $\mathbf{e}$ in $\SVT_n(\lambda/\mu).$
Using \cite[Proposition 3.8]{Travis:symmetric}, we can say that the crystal structure on $\SVT^{\mathbf{e}}_n (\lambda/\mu) $ given in \ref{def:crystal-SVT} can be seen by the following embedding
$$ \chi^{\mathbf{e}}_{\lambda/\mu}: \SVT^{\mathbf{e}}_n (\lambda/\mu) \rightarrow \mathbb{W}_n^{\otimes|\lambda|-|\mu| + |\mathbf{e}|}, T \mapsto w(T)=v_1v_2\cdots v_k \mapsto v_1 \otimes v_2 \otimes \cdots \otimes v_k.$$ Similarly, the crystal structure on $\Tab_n(\theta)=\SVT^{\mathbf{0}}_n(\theta)$ can be defined by the embedding map $\chi^{\mathbf{0}}_{\theta},$ where $\theta= \sigma^{\mathbf{e}}_{\lambda/\mu}$. Also, $w(R)=r_{\Tilde{R}}$ for any $R \in \SVT^{\mathbf{e}}_n(\lambda/\mu)$.
Thus $T=f_{j_1}^{l_1}f_{j_2}^{l_2}\cdots f_{j_t}^{l_t}(P) \in \SVT_{\mathbf{e}}(\lambda/\mu, \Phi;P)$ such that $\Psi(T)=\Tilde{T}$. It is easy to see that $T$ respects the flag $\Phi$ since the $i^{th}$ row $T_i$ of $T$ is inherited from $\Tilde{T}_i$ where $\Tilde{T}=\Tilde{T}_n * \cdots * \Tilde{T}_2 * \Tilde{T}_1$. Thus $\Psi$ is surjective.
\end{proof}
\begin{example}
\label{Ex: main Pro 1}
Let $\lambda=(2,2), \mu=(1,0)$ and $\Phi=(1,3),\mathbf{e}=(0,1)$. The only highest weight elements of $\SVT_{\mathbf{e}}(\lambda/\mu, \Phi)$ are the following:
$$
P= \begin{ytableau}
    \none &1\\1,2 &2   
\end{ytableau}
\hspace{1 cm}
Q=\begin{ytableau}
    \none &1\\1&2,3   
\end{ytableau}
$$
Now $\SVT_{\mathbf{e}}(\lambda/\mu, \Phi;P)$ contains the following three elements:
$$
\begin{ytableau}
    \none &1\\1,2 &2   
\end{ytableau}
\hspace{0.5 cm}
\begin{ytableau}
    \none &1\\1,2&3   
\end{ytableau}
\hspace{0.5 cm}
\begin{ytableau}
    \none &1\\1,3&3   
\end{ytableau}
$$
Also, the elements in $\SVT_{\mathbf{e}}(\lambda/\mu, \Phi;Q)$ are given below:
$$
\begin{ytableau}
    \none &1\\1&2,3   
\end{ytableau}
\hspace{0.5 cm}
\begin{ytableau}
    \none &1\\2&2,3   
\end{ytableau}
\hspace{0.5 cm}
\begin{ytableau}
    \none &1\\2,3&3  
\end{ytableau}
$$
Then $\SVT_{\mathbf{e}}(\lambda/\mu, \Phi;P), \SVT_{\mathbf{e}}(\lambda/\mu, \Phi;Q)$ are isomorphic to the Demazure crystals $\mathcal{B}_{s_2} (2,2,0),\mathcal{B}_{s_2s_1} (2,1,1)$ respectively, see Figure~\ref{fig:P}, Figure~\ref{fig:Q}.  
\begin{figure}
    \centering
\begin{tikzpicture}
    \draw (0,0)--(1.6,0)--(1.6,1.6);
    \draw (0,0)--(0,0.8)--(1.6,0.8);
    \draw (0.8,0)--(0.8,1.6)--(1.6,1.6);
    \node at (0.4,0.4) {$1,2$};
    \node at (1.2,0.4) {$2$};
    \node at (1.2,1.2) {$1$};
    \draw[blue][->] (0.8,-0.25) -- (0.8,-1.35);
    \node at (0.8+0.2,-0.8) {$\textcolor{blue}{2}$};
    \draw (0,0-3.3)--(1.6,0-3.3)--(1.6,1.6-3.3);
    \draw (0,0-3.3)--(0,0.8-3.3)--(1.6,0.8-3.3);
    \draw (0.8,0-3.3)--(0.8,1.6-3.3)--(1.6,1.6-3.3);
    \node at (0.4,0.4-3.3) {$1,2$};
    \node at (1.2,0.4-3.3) {$3$};
    \node at (1.2,1.2-3.3) {$1$};
    \draw[blue][->] (0.8,-0.25-3.3) -- (0.8,-1.35-3.3);
    \node at (0.8+0.2,-0.8-3.3) {$\textcolor{blue}{2}$};
    \draw (0,0-6.6)--(1.6,0-6.6)--(1.6,1.6-6.6);
    \draw (0,0-6.6)--(0,0.8-6.6)--(1.6,0.8-6.6);
    \draw (0.8,0-6.6)--(0.8,1.6-6.6)--(1.6,1.6-6.6);
    \node at (0.4,0.4-6.6) {$1,3$};
    \node at (1.2,0.4-6.6) {$3$};
    \node at (1.2,1.2-6.6) {$1$};
\end{tikzpicture}
\begin{tikzpicture}
    \draw (0,0)  node {$\null$};
    \draw[|->] (0.5,3.5) -- (1.5,3.5);
    \draw (1,3.8) node {$\Psi$};
    \draw (1.5+0.3,3.5) node {$\null$};    
\end{tikzpicture}
\begin{tikzpicture}
    \draw (0,0)--(0,1.6)--(2.4,1.6);
    \draw (0,0)--(0.8,0)--(0.8,1.6);
    \draw (0,0.8)--(1.6,0.8)--(1.6,2.4)--(2.4,2.4)--(2.4,1.6);
    \node at (0.4,0.4) {$2$};
    \node at (0.4,1.2) {$1$};
    \node at (1.2,1.2) {$2$};
    \node at (2,2) {$1$};
    \draw[blue][->] (0.4,-0.25) -- (0.4,-1.35);
    \node at (0.4+0.2,-0.8) {$\textcolor{blue}{2}$};
    \draw (0,0-3.3)--(0,1.6-3.3)--(2.4,1.6-3.3);
    \draw (0,0-3.3)--(0.8,0-3.3)--(0.8,1.6-3.3);
    \draw (0,0.8-3.3)--(1.6,0.8-3.3)--(1.6,2.4-3.3)--(2.4,2.4-3.3)--(2.4,1.6-3.3);
    \node at (0.4,0.4-3.3) {$2$};
    \node at (0.4,1.2-3.3) {$1$};
    \node at (1.2,1.2-3.3) {$3$};
    \node at (2,2-3.3) {$1$};
    \draw[blue][->] (0.4,-0.25-3.3) -- (0.4,-1.35-3.3);
    \node at (0.4+0.2,-0.8-3.3) {$\textcolor{blue}{2}$};
    \draw (0,0-6.6)--(0,1.6-6.6)--(2.4,1.6-6.6);
    \draw (0,0-6.6)--(0.8,0-6.6)--(0.8,1.6-6.6);
    \draw (0,0.8-6.6)--(1.6,0.8-6.6)--(1.6,2.4-6.6)--(2.4,2.4-6.6)--(2.4,1.6-6.6);
    \node at (0.4,0.4-6.6) {$3$};
    \node at (0.4,1.2-6.6) {$1$};
    \node at (1.2,1.2-6.6) {$3$};
    \node at (2,2-6.6) {$1$};
\end{tikzpicture}
\begin{tikzpicture}
    \draw (0,0)  node {$\null$};
    \draw[|->] (0.5,3.5) -- (1.5,3.5);
    \draw (1,3.8) node {$\rect$};
    \draw (1.5+0.3,3.5) node {$\null$};
\end{tikzpicture}
\begin{tikzpicture}
    \draw (0,0)--(0,1.6)--(1.6,1.6)--(1.6,0)--(0,0);
    \draw (0.8,0)--(0.8,1.6);
    \draw (0,0.8)--(1.6,0.8);
    \node at (0.4,0.4) {$2$};
    \node at (1.2,0.4) {$2$};
    \node at (1.2,1.2) {$1$};
    \node at (0.4,1.2) {$1$};
    \draw[blue][->] (0.8,-0.25) -- (0.8,-1.35);
    \node at (0.8+0.2,-0.8) {$\textcolor{blue}{2}$};
    \draw (0,0-3.3)--(0,1.6-3.3)--(1.6,1.6-3.3)--(1.6,0-3.3)--(0,0-3.3);
    \draw (0.8,0-3.3)--(0.8,1.6-3.3);
    \draw (0,0.8-3.3)--(1.6,0.8-3.3);
    \node at (0.4,0.4-3.3) {$2$};
    \node at (1.2,0.4-3.3) {$3$};
    \node at (1.2,1.2-3.3) {$1$};
    \node at (0.4,1.2-3.3) {$1$};
    \draw[blue][->] (0.8,-0.25-3.3) -- (0.8,-1.35-3.3);
    \node at (0.8+0.2,-0.8-3.3) {$\textcolor{blue}{2}$};
    \draw (0,0-6.6)--(0,1.6-6.6)--(1.6,1.6-6.6)--(1.6,0-6.6)--(0,0-6.6);
    \draw (0.8,0-6.6)--(0.8,1.6-6.6);
    \draw (0,0.8-6.6)--(1.6,0.8-6.6);
    \node at (0.4,0.4-6.6) {$3$};
    \node at (1.2,0.4-6.6) {$3$};
    \node at (1.2,1.2-6.6) {$1$};
    \node at (0.4,1.2-6.6) {$1$};
\end{tikzpicture}
    \caption{$\SVT_{\mathbf{e}}(\lambda/\mu, \Phi;P)$ is isomorphic to $\mathcal{B}_{s_2} (2,2,0)$}
    \label{fig:P}
\end{figure}
\begin{figure}
    \centering
\begin{tikzpicture}
    \draw (0,0)--(1.6,0)--(1.6,1.6);
    \draw (0,0)--(0,0.8)--(1.6,0.8);
    \draw (0.8,0)--(0.8,1.6)--(1.6,1.6);
    \node at (0.4,0.4) {$1$};
    \node at (1.2,0.4) {$2,3$};
    \node at (1.2,1.2) {$1$};
    \draw[blue][->] (0.8,-0.25) -- (0.8,-1.35);
    \node at (0.8+0.2,-0.8) {$\textcolor{blue}{1}$};
    \draw (0,0-3.3)--(1.6,0-3.3)--(1.6,1.6-3.3);
    \draw (0,0-3.3)--(0,0.8-3.3)--(1.6,0.8-3.3);
    \draw (0.8,0-3.3)--(0.8,1.6-3.3)--(1.6,1.6-3.3);
    \node at (0.4,0.4-3.3) {$2$};
    \node at (1.2,0.4-3.3) {$2,3$};
    \node at (1.2,1.2-3.3) {$1$};
    \draw[blue][->] (0.8,-0.25-3.3) -- (0.8,-1.35-3.3);
    \node at (0.8+0.2,-0.8-3.3) {$\textcolor{blue}{2}$};
    \draw (0,0-6.6)--(1.6,0-6.6)--(1.6,1.6-6.6);
    \draw (0,0-6.6)--(0,0.8-6.6)--(1.6,0.8-6.6);
    \draw (0.8,0-6.6)--(0.8,1.6-6.6)--(1.6,1.6-6.6);
    \node at (0.4,0.4-6.6) {$2,3$};
    \node at (1.2,0.4-6.6) {$3$};
    \node at (1.2,1.2-6.6) {$1$};
\end{tikzpicture}
\begin{tikzpicture}
    \draw (0,0)  node {$\null$};
    \draw[|->] (0.5,3.5) -- (1.5,3.5);
    \draw (1,3.8) node {$\Psi$};
    \draw (1.5+0.3,3.5) node {$\null$};    
\end{tikzpicture}
\begin{tikzpicture}
    \draw (0,0)--(0,1.6)--(2.4,1.6);
    \draw (0,0)--(0.8,0)--(0.8,1.6);
    \draw (0,0.8)--(1.6,0.8)--(1.6,2.4)--(2.4,2.4)--(2.4,1.6);
    \node at (0.4,0.4) {$3$};
    \node at (0.4,1.2) {$1$};
    \node at (1.2,1.2) {$2$};
    \node at (2,2) {$1$};
    \draw[blue][->] (0.4,-0.25) -- (0.4,-1.35);
    \node at (0.4+0.2,-0.8) {$\textcolor{blue}{1}$};
    \draw (0,0-3.3)--(0,1.6-3.3)--(2.4,1.6-3.3);
    \draw (0,0-3.3)--(0.8,0-3.3)--(0.8,1.6-3.3);
    \draw (0,0.8-3.3)--(1.6,0.8-3.3)--(1.6,2.4-3.3)--(2.4,2.4-3.3)--(2.4,1.6-3.3);
    \node at (0.4,0.4-3.3) {$3$};
    \node at (0.4,1.2-3.3) {$2$};
    \node at (1.2,1.2-3.3) {$2$};
    \node at (2,2-3.3) {$1$};
    \draw[blue][->] (0.4,-0.25-3.3) -- (0.4,-1.35-3.3);
    \node at (0.4+0.2,-0.8-3.3) {$\textcolor{blue}{2}$};
    \draw (0,0-6.6)--(0,1.6-6.6)--(2.4,1.6-6.6);
    \draw (0,0-6.6)--(0.8,0-6.6)--(0.8,1.6-6.6);
    \draw (0,0.8-6.6)--(1.6,0.8-6.6)--(1.6,2.4-6.6)--(2.4,2.4-6.6)--(2.4,1.6-6.6);
    \node at (0.4,0.4-6.6) {$3$};
    \node at (0.4,1.2-6.6) {$2$};
    \node at (1.2,1.2-6.6) {$3$};
    \node at (2,2-6.6) {$1$};
\end{tikzpicture}
\begin{tikzpicture}
    \draw (0,0)  node {$\null$};
    \draw[|->] (0.5,3.5) -- (1.5,3.5);
    \draw (1,3.8) node {$\rect$};
    \draw (1.5+0.3,3.5) node {$\null$};
\end{tikzpicture}
\begin{tikzpicture}[scale=0.85]
    \draw (0,0)--(0,2.4)--(1.6,2.4)--(1.6,1.6)--(0.8,1.6)--(0.8,0)--(0,0);
    \draw (0,0.8)--(0.8,0.8);
    \draw (0,1.6)--(0.8,1.6)--(0.8,2.4);
    \node at (0.4,0.4) {$3$};
    \node at (0.4,1.2) {$2$};
    \node at (0.4,2) {$1$};
    \node at (1.2,2) {$1$};
    \draw[blue][->] (0.8-0.4,-0.25) -- (0.8-0.4,-1.35);
    \node at (0.8+0.2-0.4,-0.8) {$\textcolor{blue}{1}$};
    \draw (0,0-4)--(0,2.4-4)--(1.6,2.4-4)--(1.6,1.6-4)--(0.8,1.6-4)--(0.8,0-4)--(0,0-4);
    \draw (0,0.8-4)--(0.8,0.8-4);
    \draw (0,1.6-4)--(0.8,1.6-4)--(0.8,2.4-4);
    \node at (0.4,0.4-4) {$3$};
    \node at (0.4,1.2-4) {$2$};
    \node at (0.4,2-4) {$1$};
    \node at (1.2,2-4) {$2$};
    \draw[blue][->] (0.8-0.4,-0.25-4) -- (0.8-0.4,-1.35-4);
    \node at (0.8+0.2-0.4,-0.8-4) {$\textcolor{blue}{2}$};
    \draw (0,0-8)--(0,2.4-8)--(1.6,2.4-8)--(1.6,1.6-8)--(0.8,1.6-8)--(0.8,0-8)--(0,0-8);
    \draw (0,0.8-8)--(0.8,0.8-8);
    \draw (0,1.6-8)--(0.8,1.6-8)--(0.8,2.4-8);
    \node at (0.4,0.4-8) {$3$};
    \node at (0.4,1.2-8) {$2$};
    \node at (0.4,2-8) {$1$};
    \node at (1.2,2-8) {$3$};
\end{tikzpicture}
    \caption{$\SVT_{\mathbf{e}}(\lambda/\mu, \Phi;Q)$ is isomorphic to $\mathcal{B}_{s_2s_1} (2,1,1)$}
    \label{fig:Q}
\end{figure}
\end{example}
\bremark
In general, $ch(\SVT_{\mathbf{e}}(\lambda/\mu,\Phi)) = \displaystyle\sum_{T \in \SVT_{\mathbf{e}}(\lambda/\mu,\Phi)}\characx^{\wt(T)}$ is not the flagged skew Schur polynomial $s_{\sigma_{\lambda/\mu}^{\mathbf{e}}}(X_{\Phi^{\mathbf{e}}})$. For example, if we assume $\lambda/\mu, \mathbf{e}, \Phi$ as in Example~\ref{Ex: main Pro 1}, then we have
$$ch(\SVT_{\mathbf{e}}(\lambda/\mu,\Phi))= \kappa_{(2,0,2)}+\kappa_{(1,1,2)}.$$
The crystal graph of $\Tab(\sigma_{\lambda/\mu}^{\mathbf{e}},\Phi^{\mathbf{e}}) - \Psi(\SVT_{\mathbf{e}}(\lambda/\mu, \Phi)),$ given in Figure~\ref{fig:flagged skew}, has one connected component, which is isomorphic to $\mathcal{B}_{s_2}(3,1,0)$. Thus 
$$s_{\sigma_{\lambda/\mu}^{\mathbf{e}}}(X_{\Phi^{\mathbf{e}}})=\kappa_{(2,0,2)}+\kappa_{(1,1,2)}+\kappa_{(3,0,1)}.$$
\eremark
\begin{figure}
    \centering
    \label{fig:flagged skew}
    \begin{tikzpicture}
    \draw (0,0)--(0,1.6)--(2.4,1.6);
    \draw (0,0)--(0.8,0)--(0.8,1.6);
    \draw (0,0.8)--(1.6,0.8)--(1.6,2.4)--(2.4,2.4)--(2.4,1.6);
    \node at (0.4,0.4) {$2$};
    \node at (0.4,1.2) {$1$};
    \node at (1.2,1.2) {$1$};
    \node at (2,2) {$1$};
    \draw[blue][->] (0.4,-0.25) -- (0.4,-1.35);
    \node at (0.4+0.2,-0.8) {$\textcolor{blue}{2}$};
    \draw (0,0-3.3)--(0,1.6-3.3)--(2.4,1.6-3.3);
    \draw (0,0-3.3)--(0.8,0-3.3)--(0.8,1.6-3.3);
    \draw (0,0.8-3.3)--(1.6,0.8-3.3)--(1.6,2.4-3.3)--(2.4,2.4-3.3)--(2.4,1.6-3.3);
    \node at (0.4,0.4-3.3) {$3$};
    \node at (0.4,1.2-3.3) {$1$};
    \node at (1.2,1.2-3.3) {$1$};
    \node at (2,2-3.3) {$1$};
\end{tikzpicture}
\begin{tikzpicture}
    \draw (0,0)  node {$\null$};
    \draw[|->] (0.5,3.5) -- (1.5,3.5);
    \draw (1,3.8) node {$\rect$};
    \draw (1.5+0.3,3.5) node {$\null$}; 
\end{tikzpicture}
\begin{tikzpicture}
    \draw (0,0)--(0,1.6)--(2.4,1.6);
    \draw (0,0)--(0.8,0)--(0.8,0.8)--(2.4,0.8)--(2.4,1.6);
    \draw (0,0.8)--(0.8,0.8)--(0.8,1.6);
    \draw (1.6,0.8)--(1.6,1.6);
    \node at (0.4,0.4) {$2$};
    \node at (0.4,1.2) {$1$};
    \node at (1.2,1.2) {$1$};
    \node at (2,1.2) {$1$};
    \draw[blue][->] (0.8,-0.25) -- (0.8,-1.35);
    \node at (0.8+0.2,-0.8) {$\textcolor{blue}{2}$};
    \draw (0,0-3.3)--(0,1.6-3.3)--(2.4,1.6-3.3);
    \draw (0,0-3.3)--(0.8,0-3.3)--(0.8,0.8-3.3)--(2.4,0.8-3.3)--(2.4,1.6-3.3);
    \draw (0,0.8-3.3)--(0.8,0.8-3.3)--(0.8,1.6-3.3);
    \draw (1.6,0.8-3.3)--(1.6,1.6-3.3);
    \node at (0.4,0.4-3.3) {$3$};
    \node at (0.4,1.2-3.3) {$1$};
    \node at (1.2,1.2-3.3) {$1$};
    \node at (2,1.2-3.3) {$1$};
\end{tikzpicture}
\caption{The crystal graph of $\Tab(\sigma_{\lambda/\mu}^{\mathbf{e}},\Phi^{\mathbf{e}}) - \Psi(\SVT_{\mathbf{e}}(\lambda/\mu, \Phi))$ for $\lambda,\mu,\mathbf{e},\Phi$ given in Example~\ref{Ex: main Pro 1}}
\end{figure}
Now using \eqref{eq:Demazure:excess}, Proposition~\ref{proposition:tableau} and Proposition~\ref{Proposition:main} we have the following theorem. 
\begin{theorem}
\label{theorem:main}
Let $\lambda/\mu$ $(\lambda, \mu \in \mathcal{P}[n])$ be a skew shape and $\Phi \in \mathcal{F}[n]$. Then 
$$ \SVT_{\mathbf{e}}(\lambda/\mu, \Phi)  \cong \displaystyle\bigsqcup_{R'} \mathcal{B}_{\tau} (\widehat{\beta(R')}^{\dagger},$$
where $R'$ runs over all $(\lambda/\mu, \Phi)$-compatible tableaux for SVT such that $\overline{\ex}(R')=\mathbf{e}$.     
\end{theorem}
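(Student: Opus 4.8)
The plan is to obtain the theorem by chaining together the three ingredients assembled immediately before the statement. First I would invoke the decomposition \eqref{eq:Demazure:excess} to write $\SVT_{\mathbf{e}}(\lambda/\mu, \Phi)$ as the disjoint union, over all admissible pairs $(\Tilde{\theta}, Q'')$, of the sets $\SVT_{\mathbf{e}}(\theta, \Phi; \Tilde{\theta}, Q'')$, where $\theta = \lambda/\mu$. Each of these sets then splits into its connected components $\SVT_{\mathbf{e}}(\theta, \Phi; \Tilde{\theta}, Q''; S')$, one for each highest weight element $S'$; so $\SVT_{\mathbf{e}}(\lambda/\mu, \Phi)$ is the disjoint union of all such components as the triple $(\Tilde{\theta}, Q'', S')$ ranges over the admissible data.

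For a single component I would then compose the two bijections. By Proposition~\ref{Proposition:main}, the map $\Psi$ is a weight-preserving bijection $\SVT_{\mathbf{e}}(\theta, \Phi; \Tilde{\theta}, Q''; S') \to \mathcal{A}(R', \Tilde{\theta}, \Tilde{\Phi})$ intertwining $e_i$ and $f_i$, where $R' = \rect(\Tilde{S'})$ is exactly the $(\lambda/\mu, \Phi)$-compatible tableau for SVT attached to the highest weight element $S'$, with $\ex(R') = \ex(S') = \mathbf{e}$. Post-composing with the rectification map of Proposition~\ref{proposition:tableau}, a weight-preserving crystal isomorphism $\mathcal{A}(R', \Tilde{\theta}, \Tilde{\Phi}) \xrightarrow{\sim} \mathcal{B}_{\tau}(\widehat{\beta(R')}^{\dagger})$ for any $\tau$ with $\tau.\widehat{\beta(R')}^{\dagger} = \widehat{\beta(R')}$, identifies each component with a Demazure crystal. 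Transporting $e_i, f_i$ back along $\rect \circ \Psi$ thus endows $\SVT_{\mathbf{e}}(\lambda/\mu, \Phi)$ with the asserted Demazure crystal structure, and taking the disjoint union over all components yields the displayed isomorphism.

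The step I expect to be the main obstacle is the bookkeeping of the index set, namely verifying that the triples $(\Tilde{\theta}, Q'', S')$ are in bijection with the $(\lambda/\mu, \Phi)$-compatible tableaux $R'$ for SVT of excess $\mathbf{e}$, so that the union can legitimately be re-indexed by $R'$ as in the statement. In one direction, $\Tilde{\theta}$ and $Q''$ are produced from $S'$ by running Algorithm~\ref{algorithm} componentwise, and then $R' = \rect(\Tilde{S'})$ is determined; in the other direction I would recover the triple from $R'$ using the inverse insertion of Proposition~\ref{algorithm:bijection} together with the injectivity of $\Psi$ established in the proof of Proposition~\ref{Proposition:main}. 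Checking that distinct components give distinct compatible tableaux and that every compatible $R'$ with $\ex(R') = \mathbf{e}$ is realized is the genuine content here, since the crystal-theoretic heart of the argument has already been supplied by the two propositions.
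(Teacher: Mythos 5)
Your proposal is correct and essentially identical to the paper's own proof, which consists of the single sentence invoking \eqref{eq:Demazure:excess}, Proposition~\ref{Proposition:main} and Proposition~\ref{proposition:tableau} --- precisely the decomposition into components and the composition $\rect\circ\Psi$ that you spell out (indeed you are more careful than the paper about the re-indexing of the union by compatible tableaux, which the paper passes over silently, and your sketch for it via inverse insertion and the injectivity of $\Psi$ is the right one). One slip worth correcting: $R'$ is not $\rect(\Tilde{S'})$ but the \emph{recording} tableau in $(\begin{bmatrix} \mathbf{b}(\Tilde{\theta}) \\ r_{\Tilde{S'}} \end{bmatrix}\rightarrow\emptyset)=(\rect(\Tilde{S'}),R')$; for a highest weight element $S'$ the insertion tableau $\rect(\Tilde{S'})$ is just the Yamanouchi tableau of shape and weight equal to $\wt(S')$, so identifying $R'$ with it would feed the wrong tableau into $\beta(\cdot)$ and hence produce the wrong key $\widehat{\beta(R')}$ in the Demazure crystal.
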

\bremark
\label{remark:main}
Theorem~\ref{theorem:main} gives a Demazure crystal structure on $\SVT(\lambda/\mu, \Phi)$ since $$\SVT(\lambda/\mu, \Phi)=\displaystyle\bigsqcup_{\mathbf{e} \in \mathbb{Z}^n _{+}} \SVT_{\mathbf{e}}(\lambda/\mu, \Phi) $$
\eremark
\begin{corollary}
\begin{upshape}
    \cite[Theorem 3.11 \& Appendix]{KRSV}
\end{upshape}
\label{corollary:SVT-tableau}
Since $\Tab(\lambda/\mu, \Phi)=\SVT_{\mathbf{0}}(\lambda/\mu, \Phi)$ we have the following
   $$ \Tab(\lambda/\mu, \Phi)  \cong \displaystyle\bigsqcup_{R'} \mathcal{B}_{\tau} (\widehat{\beta(R')}^{\dagger},$$
where $R'$ runs over all $(\lambda/\mu, \Phi)$-compatible tableaux for SVT such that $\overline{\ex}(R')=\mathbf{0}$.     
\end{corollary}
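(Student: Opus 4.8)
The plan is to derive the statement as the $\mathbf{e}=\mathbf{0}$ specialization of Theorem~\ref{theorem:main}, so the only genuine content to record is the set-theoretic identification $\Tab(\lambda/\mu,\Phi)=\SVT_{\mathbf{0}}(\lambda/\mu,\Phi)$ asserted in the hypothesis. First I would verify this identification directly from the definition of the excess. Recall $\ex(S)=(s'_1,\dots,s'_n)$, where $s'_i$ counts the entries in the $i$th row minus the number of boxes in that row. Since every box of a semi-standard set-valued tableau carries a non-empty subset, the number of entries in each row is at least the number of boxes, with equality precisely when every box in that row is singly-labeled. Hence $\ex(S)=\mathbf{0}$ if and only if every box of $S$ is singly-labeled, i.e. $S$ is an ordinary semi-standard Young tableau. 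Intersecting with the flag condition, this gives $\Tab(\lambda/\mu,\Phi)=\SVT_{\mathbf{0}}(\lambda/\mu,\Phi)$ as sets; moreover the Remark following the crystal construction guarantees that the excess is preserved by $e_i,f_i$, so $\SVT_{\mathbf{0}}(\lambda/\mu,\Phi)$ is a full subcrystal and the identification is one of crystals, not merely of sets.

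With this in hand, I would simply invoke Theorem~\ref{theorem:main} at $\mathbf{e}=\mathbf{0}$. That theorem gives $\SVT_{\mathbf{e}}(\lambda/\mu,\Phi)\cong\bigsqcup_{R'}\mathcal{B}_{\tau}(\widehat{\beta(R')}^{\dagger})$, where $R'$ ranges over all $(\lambda/\mu,\Phi)$-compatible tableaux for SVT with $\ex(R')=\mathbf{e}$; substituting $\mathbf{e}=\mathbf{0}$ and rewriting the left-hand side via the identification of the previous paragraph yields exactly the claimed decomposition of $\Tab(\lambda/\mu,\Phi)$ indexed by compatible tableaux $R'$ with $\ex(R')=\mathbf{0}$. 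There is no real obstacle here: the entire weight of the argument has already been carried by Theorem~\ref{theorem:main}, and the corollary is a restriction to a single value of the excess parameter. The one point worth stating carefully is the excess characterization of singly-labeled tableaux above, which is what legitimizes replacing $\SVT_{\mathbf{0}}$ by $\Tab$ throughout, and the observation that the recovered statement matches the Demazure-crystal decomposition of $\Tab(\lambda/\mu,\Phi)$ established in \cite[Theorem 3.11 \& Appendix]{KRSV}.
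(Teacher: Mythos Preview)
Your proposal is correct and matches the paper's approach exactly: the paper offers no separate proof for this corollary, treating it as an immediate specialization of Theorem~\ref{theorem:main} at $\mathbf{e}=\mathbf{0}$ via the identification $\Tab(\lambda/\mu,\Phi)=\SVT_{\mathbf{0}}(\lambda/\mu,\Phi)$ stated in the hypothesis. Your only addition is to spell out why that identification holds (boxes are singly-labeled iff excess vanishes) and why it is compatible with the crystal structure, both of which are routine.
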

\begin{corollary}
\label{corollary:main}
Let $\lambda/\mu$ $(\lambda, \mu \in \mathcal{P}[n])$ be a skew shape and $\Phi \in \mathcal{F}[n]$. Then
$$ G_{\lambda/\mu}(X_{\Phi};\mathbf{t})=\displaystyle\sum_{\alpha \in \mathbb{Z}^n _{+}}(-1)^{|\alpha|}\mathbf{t}^{\alpha}\displaystyle\sum_{R'}\kappa_{\widehat{\beta(R')}},$$
where $R'$ runs over all $(\lambda/\mu, \Phi)$-compatible tableaux for SVT such that $\overline{\ex}(R')=\alpha$.

At $\mathbf{t}=\mathbf{0},$ we obtain the expansion of the flagged skew Schur polynomial $s_{\lambda/\mu}(X_{\Phi})$ in terms of key polynomials, given by Reiner and Shimozono \cite[Theorem 20]{RS}, as follows
$$ s_{\lambda/\mu}(X_{\Phi})=G_{\lambda/\mu}(X_{\Phi};\mathbf{0})=\displaystyle\sum_{R'}\kappa_{\widehat{\beta(R')}},$$
where $R'$ runs over all $(\lambda/\mu, \Phi)$-compatible tableaux for SVT such that $\overline{\ex}(R')=\mathbf{0}$.
\end{corollary}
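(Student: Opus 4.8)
The plan is to deduce this corollary directly from Theorem~\ref{theorem:main}, so that essentially all of the combinatorial content is already in hand and what remains is an assembly of characters. First I would start from the definition
$$ G_{\lambda/\mu}(X_\Phi; \mathbf{t})= \sum _{T \in \SVT(\lambda/\mu,\Phi)} (-1)^{|\ex(T)|}\mathbf{t}^{\ex(T)}\characx^{\wt(T)} $$
and partition the index set according to excess, using the decomposition $\SVT(\lambda/\mu, \Phi)=\bigsqcup_{\mathbf{e}} \SVT_{\mathbf{e}}(\lambda/\mu, \Phi)$ from Remark~\ref{remark:main}. Since both $|\ex(T)|$ and $\ex(T)$ are constant (equal to $|\mathbf{e}|$ and $\mathbf{e}$) on each piece $\SVT_{\mathbf{e}}(\lambda/\mu,\Phi)$, the scalar $(-1)^{|\mathbf{e}|}\mathbf{t}^{\mathbf{e}}$ factors out of the inner sum, which then equals the crystal character $ch\bigl(\SVT_{\mathbf{e}}(\lambda/\mu,\Phi)\bigr)=\sum_{T \in \SVT_{\mathbf{e}}(\lambda/\mu,\Phi)}\characx^{\wt(T)}$. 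This gives
$$ G_{\lambda/\mu}(X_\Phi; \mathbf{t})= \sum_{\mathbf{e} \in \mathbb{Z}^n_+} (-1)^{|\mathbf{e}|}\mathbf{t}^{\mathbf{e}}\, ch\bigl(\SVT_{\mathbf{e}}(\lambda/\mu,\Phi)\bigr). $$

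Next I would feed in Theorem~\ref{theorem:main}. Because $ch$ is additive over disjoint unions of full subcrystals and is preserved by crystal isomorphisms, the decomposition $\SVT_{\mathbf{e}}(\lambda/\mu,\Phi) \cong \bigsqcup_{R'}\mathcal{B}_{\tau}(\widehat{\beta(R')}^{\dagger})$ yields
$$ ch\bigl(\SVT_{\mathbf{e}}(\lambda/\mu,\Phi)\bigr)= \sum_{R'} ch\bigl(\mathcal{B}_{\tau}(\widehat{\beta(R')}^{\dagger})\bigr), $$
with $R'$ ranging over $(\lambda/\mu,\Phi)$-compatible tableaux for SVT having $\ex(R')=\mathbf{e}$. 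I would then invoke the refined Demazure character formula (the proposition following the definition of $\mathcal{B}_w(\lambda)$): since the permutation $\tau$ attached to $R'$ is, by Proposition~\ref{proposition:tableau}, any $\tau$ with $\tau.\widehat{\beta(R')}^{\dagger}=\widehat{\beta(R')}$, each summand is $\key_{\tau.\widehat{\beta(R')}^{\dagger}}=\key_{\widehat{\beta(R')}}$. Substituting and renaming the excess variable $\mathbf{e}$ as $\alpha$ produces the first displayed formula.

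For the specialization at $\mathbf{t}=\mathbf{0}$, I would use the monomial convention $\mathbf{0}^{\alpha}=0$ for $\alpha\neq\mathbf{0}$ and $\mathbf{0}^{\mathbf{0}}=1$, so that only the term $\alpha=\mathbf{0}$ survives in the outer sum. Combined with $\SVT_{\mathbf{0}}(\lambda/\mu,\Phi)=\Tab(\lambda/\mu,\Phi)$ and the identification $s_{\lambda/\mu}(X_{\Phi})=G_{\lambda/\mu}(X_{\Phi};\mathbf{0})$, this recovers the Reiner--Shimozono expansion $\sum_{R'}\key_{\widehat{\beta(R')}}$ over compatible tableaux of excess $\mathbf{0}$. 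I do not expect a genuine obstacle here: the corollary is a formal consequence of Theorem~\ref{theorem:main} together with the refined Demazure character formula, and the only points needing care are the bookkeeping that the $\tau$ associated with each $R'$ is precisely the one realizing $\key_{\widehat{\beta(R')}}$ as the character of $\mathcal{B}_{\tau}(\widehat{\beta(R')}^{\dagger})$, and the handling of the $\mathbf{t}=\mathbf{0}$ monomial convention in the last step.
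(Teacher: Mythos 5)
Your proposal is correct and follows exactly the route the paper intends: the corollary is stated as an immediate consequence of Theorem~\ref{theorem:main}, obtained by partitioning $\SVT(\lambda/\mu,\Phi)$ by excess, factoring out $(-1)^{|\mathbf{e}|}\mathbf{t}^{\mathbf{e}}$, and converting each Demazure crystal $\mathcal{B}_{\tau}(\widehat{\beta(R')}^{\dagger})$ into $\key_{\widehat{\beta(R')}}$ via the refined Demazure character formula, with the weight-preservation guaranteed by Propositions~\ref{proposition:tableau} and~\ref{Proposition:main}. Your handling of the $\mathbf{t}=\mathbf{0}$ specialization and the identification $\SVT_{\mathbf{0}}(\lambda/\mu,\Phi)=\Tab(\lambda/\mu,\Phi)$ likewise matches the paper's Corollary~\ref{corollary:SVT-tableau}.
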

\begin{example}
    Let $\lambda=(2,2), \mu=(1,0)$ and $\Phi=(1,3)$. Then the highest weight elements of $\SVT(\lambda/\mu, \Phi)$ are the following:
$$ P =\ytableausetup{mathmode,
notabloids}
\begin{ytableau}
    \none &1\\ 1 &2   
\end{ytableau}
\hspace{0.5 cm}
Q =\ytableausetup{mathmode,
notabloids}
\begin{ytableau}
    \none &1\\1,2 &2   
\end{ytableau}
\hspace{0.5 cm}
R =\ytableausetup{mathmode,
notabloids}
\begin{ytableau}
    \none &1\\1&2,3   
\end{ytableau}
\hspace{0.5 cm}
S =\ytableausetup{mathmode,
notabloids}
\begin{ytableau}
    \none &1\\1,2 &2,3   
\end{ytableau}
$$
Then
$$ 
\Tilde{P} =\ytableausetup{mathmode,
notabloids}
\begin{ytableau}
    \none & \none &1\\ 1 &2   
\end{ytableau}
\hspace{0.5 cm}
\Tilde{Q} =\ytableausetup{mathmode,
notabloids}
\begin{ytableau}
    \none & \none &1\\1 &2 \\ 2   
\end{ytableau}
\hspace{0.5 cm}
\Tilde{R} =\ytableausetup{mathmode,
notabloids}
\begin{ytableau}
    \none & \none &1\\1&2 \\ 3   
\end{ytableau}
\hspace{0.5 cm}
\Tilde{S} =\ytableausetup{mathmode,
notabloids}
\begin{ytableau}
    \none & \none &1\\1 &2 \\ 2\\3  
\end{ytableau}
$$
It is clear that $\Tilde{P}$ respects the flag $\Phi^{(0,0)}=(1,3),$ $\Tilde{Q}$ respects the flag $\Phi^{(0,1)}=(1,3,3),$ $\Tilde{R}$ respects the flag $\Phi^{(0,1)}=(1,3,3)$ and $\Tilde{S}$ respects the flag $\Phi^{(0,2)}=(1,3,3,3)$.

Now
$$(
\begin{bmatrix}
           \mathbf{b}(\sigma_{\lambda/\mu}^{(0,0)}) \\
           r_{\Tilde{P}} \\
           
\end{bmatrix} \rightarrow \emptyset)
=(\ytableausetup{mathmode,
notabloids}
  \begin{ytableau}
    1&1\\2   
  \end{ytableau},\ytableausetup{mathmode,
notabloids}
  \begin{ytableau}
    1&2\\2   
  \end{ytableau} );
  \hspace{1 cm}
(
\begin{bmatrix}
           \mathbf{b}(\sigma_{\lambda/\mu}^{(0,1)}) \\
           r_{\Tilde{Q}} \\
\end{bmatrix} \rightarrow \emptyset)
=(\ytableausetup{mathmode,
notabloids}
  \begin{ytableau}
    1&1\\2&2   
  \end{ytableau},\ytableausetup{mathmode,
notabloids}
  \begin{ytableau}
    1&2\\2 &3
  \end{ytableau} ); 
$$
$$
(
\begin{bmatrix}
           \mathbf{b}(\sigma_{\lambda/\mu}^{(0,1)}) \\
           r_{\Tilde{R}} \\
           
\end{bmatrix} \rightarrow \emptyset)
=(\ytableausetup{mathmode,
notabloids}
  \begin{ytableau}
    1&1\\2\\3   
  \end{ytableau},\ytableausetup{mathmode,
notabloids}
  \begin{ytableau}
    1&2\\2\\3   
  \end{ytableau} );
\hspace{1 cm}  
(
\begin{bmatrix}
           \mathbf{b}(\sigma_{\lambda/\mu}^{(1,1)}) \\
           r_{\Tilde{S}} \\
\end{bmatrix} \rightarrow \emptyset)
=(\ytableausetup{mathmode,
notabloids}
  \begin{ytableau}
    1&1\\2&2 \\3  
  \end{ytableau},\ytableausetup{mathmode,
notabloids}
  \begin{ytableau}
    1&2\\2 &3 \\4
  \end{ytableau} ).
$$
Then the set of all $(\lambda/\mu,\Phi)$-compatible tableaux for SVT contains the following tableaux:
$$ \hat{P} =\ytableausetup{mathmode,
notabloids}
\begin{ytableau}
     1 &2 \\2   
\end{ytableau}
\text{ with } \overline{\ex}(\hat{P})=(0,0,0);
\hspace{0.5 cm}
\hat{Q} =\ytableausetup{mathmode,
notabloids}
\begin{ytableau}
    1&2 \\ 2&3   
\end{ytableau}
\text{ with } \overline{\ex}(\hat{Q})=(0,1,0);
$$ 
$$
\hat{R} =\ytableausetup{mathmode,
notabloids}
\begin{ytableau}
    1&2\\ 2 \\3   
\end{ytableau}
\text{ with } \overline{\ex}(\hat{R})=(0,1,0);
\hspace{0.5 cm}
\hat{S} =\ytableausetup{mathmode,
notabloids}
\begin{ytableau}
    1&2 \\2&3 \\ 4  
\end{ytableau}
\text{ with } \overline{\ex}(\hat{S})=(0,2
,0).
$$
Then the corresponding left-key tableaux (using \cite{Willis:left-key}, \cite{Mrigendra:left-key}) are the following:
$$ K_{\_}(\hat{P}) =\ytableausetup{mathmode,
notabloids}
\begin{ytableau}
     1 &2 \\2   
\end{ytableau}
\text{ with } \beta(\hat{P})=(1,2,0);
\hspace{0.5 cm}
K_{\_}(\hat{Q}) =\ytableausetup{mathmode,
notabloids}
\begin{ytableau}
    1&1 \\ 2&2   
\end{ytableau}
\text{ with } \beta(\hat{Q})=(2,2,0);
$$ 
$$
K_{\_}(\hat{R}) =\ytableausetup{mathmode,
notabloids}
\begin{ytableau}
    1&2\\ 2 \\3   
\end{ytableau}
\text{ with } \beta(\hat{R})=(1,2,1);
\hspace{0.5 cm}
K_{\_}(\hat{S}) =\ytableausetup{mathmode,
notabloids}
\begin{ytableau}
    1&1 \\2&2 \\ 4  
\end{ytableau}
\text{ with } \beta(\hat{S})=(2,2,0,1).
$$
Now using Theorem~\ref{theorem:RS-hat} we have the following:
$$\beta(\hat{P})=(1,2,0), \Phi^{(0,0)}=(1,3,3) \implies \widehat{\beta(\hat{P})}=(1,0,2);$$
$$\beta(\hat{Q})=(2,2,0), \Phi^{(0,1)}=(1,3,3) \implies \widehat{\beta(\hat{Q})}=(2,0,2);$$ 
$$\beta(\hat{R})=(1,2,1), \Phi^{(0,1)}=(1,3,3) \implies \widehat{\beta(\hat{R})}=(1,1,2);$$
$$\beta(\hat{S})=(2,2,0,1), \Phi^{(0,2)} =(1,3,3,3) \implies\widehat{\beta(\hat{S})}=(2,1,2).$$
Therefore, we have the following expansion
$$ G_{\lambda/\mu}(X_{\Phi};\mathbf{t})=\kappa_{(1,0,2)} - t_2 \kappa_{(2,0,2)} -t_2 \kappa_{(1,1,2)} + t_2^2 \kappa_{(2,1,2)}.$$
\end{example}
\begin{corollary}
\label{corollary:schur-SVT}
Let $\lambda/\mu$ $(\lambda, \mu \in \mathcal{P}[n])$ be a skew shape and $\Phi=(n,n,\ldots,n) = n^n$ (say). Then 
$$ RG_{\lambda/\mu}(\characx;\mathbf{t})=\displaystyle\sum_{\alpha \in \mathbb{Z}^n _{+}}(-1)^{|\alpha|}\mathbf{t}^{\alpha}\displaystyle\sum_{R'}s_{\widehat{\beta(R')}^{\dagger}}$$
$$ \hspace{2.7 cm} =\displaystyle\sum_{\alpha \in \mathbb{Z}^n _{+}}(-1)^{|\alpha|}\mathbf{t}^{\alpha}\displaystyle\sum_{R'}s_{\sh(R')},$$
where $R'$ runs over all $(\lambda/\mu, n^n)$-compatible tableaux for SVT such that $\overline{\ex}(R')=\alpha$.
\end{corollary}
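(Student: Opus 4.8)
The plan is to specialize Corollary~\ref{corollary:main} to the trivial flag $\Phi=\mathbf{n}=(n,n,\dots,n)$ and to show that in this regime every key polynomial that occurs is already a Schur polynomial. First I would invoke the specialization recorded in \S\ref{Section 2}, namely $G_{\lambda/\mu}(X_{\mathbf{n}};\mathbf{t})=RG_{\lambda/\mu}(\characx;\mathbf{t})$, and then apply Corollary~\ref{corollary:main} verbatim to obtain
$$RG_{\lambda/\mu}(\characx;\mathbf{t})=\sum_{\alpha\in\mathbb{Z}^n_{+}}(-1)^{|\alpha|}\mathbf{t}^{\alpha}\sum_{R'}\kappa_{\widehat{\beta(R')}},$$
where $R'$ ranges over all $(\lambda/\mu,\mathbf{n})$-compatible tableaux for SVT with $\ex(R')=\alpha$. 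The whole statement therefore reduces to the single identity $\kappa_{\widehat{\beta(R')}}=s_{\widehat{\beta(R')}^{\dagger}}=s_{\sh(R')}$ for each such $R'$.

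For the first of these equalities I would use that the flag $\mathbf{n}$ imposes no constraint at all. By the remark preceding Proposition~\ref{propostion:crystal-morphism}, each $\SVT_{\mathbf{e}}(\theta,\mathbf{n};\Tilde{\theta},Q'')$ is a full subcrystal of $\SVT_n(\lambda/\mu)$; its connected components are therefore connected components of $\SVT_n(\lambda/\mu)$, and each such component, being a connected normal type $A_{n-1}$ crystal, is isomorphic to a Schur crystal $\mathcal{B}(\nu)$. Consequently each connected component $\SVT_{\mathbf{e}}(\theta,\mathbf{n};\Tilde{\theta},Q'';S')$ is a full connected crystal whose character equals $s_{\nu}$, where $\nu$ is the partition recording the highest weight of $S'$. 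On the other hand, Proposition~\ref{Proposition:main} together with Proposition~\ref{proposition:tableau} furnishes a crystal isomorphism of this component onto the Demazure crystal $\mathcal{B}_{\tau}(\widehat{\beta(R')}^{\dagger})$. Now $\mathcal{B}_{\tau}(\widehat{\beta(R')}^{\dagger})$ has a unique highest weight element, namely $T_{\widehat{\beta(R')}^{\dagger}}$, so if it is isomorphic to a connected normal crystal it must exhaust the ambient Schur crystal; that is, $\mathcal{B}_{\tau}(\widehat{\beta(R')}^{\dagger})=\mathcal{B}(\widehat{\beta(R')}^{\dagger})$ (equivalently $\tau=w_0$). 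Comparing characters then gives $\nu=\widehat{\beta(R')}^{\dagger}$ and $\kappa_{\widehat{\beta(R')}}=s_{\widehat{\beta(R')}^{\dagger}}$.

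It remains to identify $\widehat{\beta(R')}^{\dagger}=\sh(R')$, and this holds for an arbitrary flag. By construction $\beta(R')$ is the weight of the left key tableau $K_{\_}(R')$, which is a key tableau of the same shape as $R'$; since the content of a key tableau of shape $\nu$ is a composition whose sorted parts are exactly $\nu$, we get $\beta(R')^{\dagger}=\sh(R')$. Combining this with the property $\beta^{\dagger}=\widehat{\beta}^{\dagger}$ from Theorem~\ref{theorem:RS-hat} yields $\widehat{\beta(R')}^{\dagger}=\beta(R')^{\dagger}=\sh(R')$, which simultaneously supplies both displayed equalities and completes the argument.

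I expect the crux to be the step asserting $\tau=w_0$, i.e.\ that the flag-$\mathbf{n}$ specialization genuinely produces \emph{full} crystals rather than proper Demazure truncations. This is exactly where the full-subcrystal remark, the normality of $\SVT_n(\lambda/\mu)$, and the isomorphism of Proposition~\ref{Proposition:main} must be combined; the remaining manipulations are routine bookkeeping with the statistics $\beta$, $\widehat{\beta}$, and $\sh$.
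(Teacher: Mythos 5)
Your overall strategy is the right one and is, in substance, how the paper itself treats this statement: the paper gives no separate argument for Corollary~\ref{corollary:schur-SVT}, regarding it as the specialization $\Phi=\mathbf{n}$ of Corollary~\ref{corollary:main} together with the collapse of key polynomials to Schur polynomials. Your reduction to the identity $\kappa_{\widehat{\beta(R')}}=s_{\widehat{\beta(R')}^{\dagger}}=s_{\sh(R')}$ is exactly the content the paper leaves implicit, and your derivation of $\widehat{\beta(R')}^{\dagger}=\sh(R')$ (the left key $K_{\_}(R')$ is a key tableau of shape $\sh(R')$, so $\beta(R')^{\dagger}=\sh(R')$, and Theorem~\ref{theorem:RS-hat} gives $\beta^{\dagger}=\widehat{\beta}^{\dagger}$) is correct.

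The one step you should not pass over is the clause ``each such component, being a connected normal type $A_{n-1}$ crystal, is isomorphic to a Schur crystal $\mathcal{B}(\nu)$.'' The crystals in this paper are only \emph{seminormal} by definition (\S\ref{Section 3}), and seminormality does not imply that connected components are highest-weight crystals; normality of $\SVT_n(\lambda/\mu)$ for \emph{skew} shapes is nowhere stated in the paper, so as written this clause assumes something essentially equivalent to the $\tau=w_0$ conclusion you are after. Two repairs are available from the paper's own results. Either derive normality from Proposition~\ref{propostion:crystal-morphism}: the map $\Psi^{\theta}_{\Tilde{\theta},Q''}$ is an injective crystal morphism preserving $\wt$, $\varepsilon_i$, $\varphi_i$ and commuting with the operators, so each connected component of $\SVT_{\mathbf{e}}(\theta,\mathbf{n};\Tilde{\theta},Q'')$ maps isomorphically onto a full connected component of $\Tab_n(\Tilde{\theta})$, and skew tableau crystals are normal. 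Or, more economically, bypass normality altogether: the component is closed under the ambient $e_i,f_i$ and is seminormal, and these properties transport across the isomorphism furnished by Propositions~\ref{Proposition:main} and~\ref{proposition:tableau}; since a Demazure crystal is closed under the ambient $e_i$, the $\varepsilon_i$-strings (hence, by seminormality on both sides, the $\varphi_i$-values) agree with the ambient ones, so $\mathcal{B}_{\tau}(\widehat{\beta(R')}^{\dagger})$ is also closed under the ambient $f_i$; a nonempty subset of the connected crystal $\mathcal{B}(\widehat{\beta(R')}^{\dagger})$ closed under all raising and lowering operators is the whole crystal, forcing $\kappa_{\widehat{\beta(R')}}=s_{\widehat{\beta(R')}^{\dagger}}$. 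With either patch your argument is complete.
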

\section{various expansions}
\label{Section 5}
In this section, we provide expansions of the refined dual stable Grothendieck polynomial $\widetilde{g}_{\lambda / \mu}(\characx;\mathbf{t})$ in terms of the stable Grothendieck polynomials $RG_{\lambda}(\mathbf{x};\mathbf{1})$ and in terms of the dual stable Grothendieck polynomials $g_{\lambda}(\characx)$. We also give similar expansions for the row-refined skew stable Grothendieck polynomial $RG_{\lambda/\mu}(\characx,\mathbf{t})$ and the Schur P-functions $P_{\lambda}(\characx)$. We will give those expansions using methods established in \cite{Morse-Jason:K-bases}. The similar expansions for canonical Grothendieck polynolials have been given in \cite{Anne:un-hook}.

\begin{definition}
\label{definition:rpp}
A \emph{reverse plane partition} (RPP) of skew shape $\lambda/\mu$ is a filling of the skew diagram $\lambda/\mu$ with positive integers which is weakly increasing along both rows and columns.

The \emph{weight} of $R$ is defined as $\wt(R):=(r_1,r_2,\ldots),$ where $r_i$ is the number of columns of $R$ containing $i$.

For a RPP $R,$ we circle only the topmost occurrence of each letter in each column of $R$. The \emph{reading word} of a reverse plane partition $R,$ denoted by $w(R)$, is the word $w_1w_2\cdots$ obtained by reading the circled elements of $R$ starting from the bottom row, left to right and then continuing up the rows.
\end{definition}
\begin{example}
\label{example:rpp}
$R=$ $  
\ytableausetup{mathmode,
notabloids}
  \begin{ytableau}
    1&2&3\\1&3   
  \end{ytableau}
$ is a reverse plane partition of shape $(3,2,0)$ with  $\wt(R)=(1,1,2)$ and $w(R)=3123$.
\end{example}

Lam and Pylyavskyy \cite{Lam} have given the following formula for the dual stable Grothendieck polynomial $g_{\lambda/\mu}(\characx)$ for the skew shape $\lambda/\mu$ 
$$ g_{\lambda/\mu}(\characx) = \displaystyle\sum_{R \in \RPP_n(\lambda/\mu)} \characx^{\wt(R)},$$
where $\RPP_n(\lambda/\mu)$ denotes the set of all reverse plane partitions of shape $\lambda/\mu$ with entries at most $n$.

As per \cite{Morse-Jason:K-bases}, a symmetric function, $f_{\alpha}$, is said to have a \emph{tableaux Schur expansion} if there is a set of semi-standard tableaux $\mathbb{T}(\alpha)$ and a weight function $\wt_{\alpha}$ such that $$ f_{\alpha}=\displaystyle\sum_{T \in \mathbb{T}(\alpha)} \wt_{\alpha}(T)s_{\sh(T)}(\mathbf{x})$$

Given $\mathbb{T}(\alpha),$ let $\mathbb{S}(\alpha), \mathbb{R}(\alpha)$ be the sets of set-valued tableaux, reverse plane partitions of partition shapes respectively, defined as follows: 
$$ S \in \mathbb{S}(\alpha) \text{ if and only if } \rect(w(S)) \in \mathbb{T}(\alpha), $$
$$ R \in \mathbb{R}(\alpha) \text{ if and only if } \rect(w(R)) \in \mathbb{T}(\alpha).$$
Similarly, we also extend $\wt_{\alpha}$ to $\mathbb{S}(\alpha)$ and $\mathbb{R}(\alpha)$ by $\wt_{\alpha}(X):=\wt_{\alpha}(\rect(w(X)))$. Here, given any word $\mathbf{u},$ $\rect(\mathbf{u})$ denotes the unique semi-standard tableau Knuth equivalent to the word $\mathbf{u}$.

Any symmetric function with that property has the following expansion in terms of $g_{\lambda}, G_{\lambda}$. 
\begin{theorem}\begin{upshape}
    \cite[Theorem 3.5]{Morse-Jason:K-bases}
\end{upshape}
\label{theorem:K bases}
Given $$f_{\alpha} = \displaystyle\sum_{T \in \mathbb{T}(\alpha)} \wt_{\alpha}(T)s_{\sh(T)}(\mathbf{x}),$$ 
we have $$f_{\alpha} = \displaystyle\sum_{R \in \mathbb{R}(\alpha)} \wt_{\alpha}(R)RG_{\sh(R)}(\mathbf{x};\mathbf{1})$$
$$ \hspace{1.4 cm}= \displaystyle\sum_{S \in \mathbb{S}(\alpha)} (-1)^{|\ex(S)|}\wt_{\alpha}(S) g_{\sh(S)}(\mathbf{x})$$
\end{theorem}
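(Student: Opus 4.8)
The plan is to peel both asserted expansions down to a single pair of ``master identities'' attached to one tableau, and then to prove those by duality. Since specializing $\mathbf{t}=\mathbf{1}$ turns $RG_{\nu}(\characx;\mathbf{1})$ into the ordinary stable Grothendieck polynomial $G_{\nu}$, the first formula is the expansion of $f_{\alpha}$ in the basis $\{G_{\nu}\}$ and the second is its expansion in the dual basis $\{g_{\nu}\}$. On the right-hand side of the $G$-formula I would group the sum over $R\in\mathbb{R}(\alpha)$ according to the tableau $T:=\rect(w(R))\in\mathbb{T}(\alpha)$ and pull the constant $\wt_{\alpha}(R)=\wt_{\alpha}(T)$ out of each group; comparing with the hypothesis $f_{\alpha}=\sum_{T\in\mathbb{T}(\alpha)}\wt_{\alpha}(T)\,s_{\sh(T)}(\characx)$, it then suffices to prove, for every fixed semistandard Young tableau $T$ of straight shape $\lambda=\sh(T)$, the identity
\[
 \sum_{\substack{R\in\RPP\\ \rect(w(R))=T}} G_{\sh(R)}(\characx)=s_{\lambda}(\characx),
\]
which I call $(\star)$. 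Grouping the $g$-formula in the same way (the factor $(-1)^{|\ex(S)|}$ simply stays inside each group) reduces the second expansion to
\[
 \sum_{\substack{S\in\SVT\\ \rect(w(S))=T}} (-1)^{|\ex(S)|}\, g_{\sh(S)}(\characx)=s_{\lambda}(\characx),
\]
call it $(\star\star)$. The point of this reduction is that $(\star)$ and $(\star\star)$ no longer mention $f_{\alpha}$, $\alpha$, or $\wt_{\alpha}$.

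To establish $(\star)$ and $(\star\star)$ I would use the Hall pairing, for which $\{G_{\lambda}\}$ and $\{g_{\mu}\}$ are dual bases, $\langle G_{\lambda},g_{\mu}\rangle=\delta_{\lambda\mu}$, together with the tableau generating functions $G_{\nu}=\sum_{S\in\SVT(\nu)}(-1)^{|\ex(S)|}\characx^{\wt(S)}$ and $g_{\nu}=\sum_{R\in\RPP(\nu)}\characx^{\wt(R)}$ recalled earlier. Pairing the left-hand side of $(\star)$ with an arbitrary $g_{\mu}$ collapses all the inner products to $\#\{R\in\RPP(\mu):\rect(w(R))=T\}$. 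This count is exactly the coefficient of $s_{\lambda}$ in the Schur expansion of $g_{\mu}$, hence equals $\langle s_{\lambda},g_{\mu}\rangle$; since $\mu$ is arbitrary and an element of the completed ring is determined by its pairings against all $g_{\mu}$, we get $(\star)$. Symmetrically, pairing $(\star\star)$ with an arbitrary $G_{\mu}$ collapses it to $\sum_{S\in\SVT(\mu),\,\rect(w(S))=T}(-1)^{|\ex(S)|}$, which is the coefficient of $s_{\lambda}$ in $G_{\mu}$, i.e.\ $\langle s_{\lambda},G_{\mu}\rangle$, and $(\star\star)$ follows in the same manner.

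The main obstacle is the combinatorial fact hidden behind the word ``exactly'': that the fibre counts $\#\{R\in\RPP(\mu):\rect(w(R))=T\}$ and $\sum_{S\in\SVT(\mu),\,\rect(w(S))=T}(-1)^{|\ex(S)|}$ genuinely compute the Schur coefficients $[s_{\lambda}]g_{\mu}$ and $[s_{\lambda}]G_{\mu}$, and, in particular, that they do not depend on which tableau $T$ of shape $\lambda$ is chosen. This independence is a jeu-de-taquin (plactic) statement, and the coefficient interpretation is where the uncrowding bijections enter: one must show that uncrowding carries $R$ (respectively $S$) to a straight-shape tableau in the plactic class of $T$ together with recording data that tracks $\sh(R)$ (respectively $\sh(S)$ and $|\ex(S)|$), and that the reading word never leaves its Knuth class during insertion — a plactic invariance I would verify step by step along the algorithm exactly as in the proof of Proposition~\ref{propostion:crystal-morphism}. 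A secondary technicality is that $G_{\nu}$ and $g_{\nu}$ have unbounded degree, so the duality must be read in the completed ring of symmetric functions where $\{G_{\lambda}\}$ and $\{g_{\mu}\}$ are honestly dual; this is standard but deserves a remark. One could instead avoid the pairing altogether and prove $(\star)$ and $(\star\star)$ by a weight-preserving, sign-reversing involution built directly from uncrowding, which is closer in spirit to the argument of \cite{Morse-Jason:K-bases}.
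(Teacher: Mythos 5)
The paper never proves this theorem: it is quoted verbatim from Bandlow--Morse \cite{Morse-Jason:K-bases} and used as a black box in \S\ref{Section 6}, so the only meaningful benchmark is the proof in that reference. Measured against it, your proposal is correct in outline and follows essentially the same route: reduce to the two single-Knuth-class identities $(\star)$ and $(\star\star)$, then identify the fiber data with entries of the transition matrices among $\{s_\lambda\}$, $\{G_\lambda\}$, $\{g_\lambda\}$. Your reduction step is airtight --- $\wt_\alpha$ is constant on the fibers of $X\mapsto \rect(w(X))$ by definition, and on the fiber over $T$ the sign $(-1)^{|\ex(S)|}=(-1)^{|\sh(T)|-|\sh(S)|}$ is likewise constant --- and your Hall-pairing derivation of $(\star)$ and $(\star\star)$ is a clean substitute for the explicit inverse-transition-matrix manipulation in the source. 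What you should say plainly, though, is that the two ``hidden'' fiber-count facts are not a technicality but the entire substance of the theorem: the equality $\#\{R\in\RPP(\mu):\rect(w(R))=T\}=\langle s_{\sh(T)},g_\mu\rangle$, together with its independence of the representative $T$, is precisely the Lam--Pylyavskyy Schur expansion of $g_\mu$ \cite{Lam} (compatible with the crystal structure on reverse plane partitions \cite{Galashin:LR}), and the signed count $\sum_{S}(-1)^{|\ex(S)|}=\langle s_{\sh(T)},G_\mu\rangle$ is the Buch--Lenart expansion of $G_\mu$ obtained from uncrowding \cite{Buch:K-LR,Lenart:Comb-aspects}. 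Once these are cited, or re-proved by the plactic-invariance-of-insertion argument you sketch (which on the set-valued side is indeed parallel to Proposition~\ref{propostion:crystal-morphism}, while on the RPP side it needs the different, RPP-specific machinery of \cite{Lam,Galashin:LR} rather than the same algorithm), your proof closes.

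Two small corrections. First, $g_\nu$ is an honest polynomial of degree $|\nu|$; only $G_\nu$ has unbounded degree, so the duality you invoke is the pairing between the ring of symmetric functions and its completion, not a pairing defined on the completion alone --- this also makes your ``determined by pairings against all $g_\mu$ (resp.\ $G_\mu$)'' step a triangularity argument, using $g_\mu=s_\mu+(\text{lower degree})$ and $G_\mu=s_\mu+(\text{higher degree})$. Second, the fiber in $(\star)$ is infinite (shapes $\sh(R)$ grow without bound), so you should add a sentence noting that the sum converges in the completion because, for each fixed partition $\kappa$, only finitely many $R$ in the fiber satisfy $\sh(R)\subseteq\kappa$, and that pairing against a fixed $g_\mu$ only sees the finitely many $R$ of shape exactly $\mu$.
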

\subsection{The refined dual stable Grothendieck polynomial} Given $\lambda, \mu \in \mathcal{P}[n]$ and a flag $\Phi \in \mathcal{F}[n] ,$ $ \RPP(\lambda/\mu,\Phi)$ denotes the set of all reverse plane partitions $R$ of shape $\lambda/\mu$ such that the entries in $i^{th}$ row of $R$ are $\leq \Phi_i $ for all $1 \leq i \leq n$.
Following Galashin, Grinberg and Liu \cite{Galashin:Bender-Knuth}, we define the column equalities vector (in short, the \textit{ceq} statistic) of a reverse plane partition $R$ in $\RPP_n(\lambda/\mu)$ as $\ceq(R):=(c_1,c_2,\ldots) \in \mathbb{Z}^n _{+}$ s.t. $c_i$ is the number of boxes $(i,j)$ s.t. $(i,j),(i+1,j) \in \lambda/\mu$ and $R(i,j)=R(i+1,j)$. For instance, if $R$ is the reverse plane partition, given below,
$$ \ytableausetup{mathmode,
notabloids}
  \begin{ytableau}
    \none &2 &3 &4\\ 1 &2 &3 \\ 1 &3
  \end{ytableau}
$$
then $\ceq(R)=(2,1,0)$.

We define the \textit{flagged refined dual stable Grothendieck polynomial} $$g_{\lambda / \mu}(X_{\Phi};\mathbf{t}): = \displaystyle\sum _{R \in \RPP(\lambda / \mu, \Phi)} \mathbf{t}^{\ceq(R)} \characx^{\wt(R)}$$
$g_{\lambda / \mu}(X_{\Phi};\mathbf{t})$ is the row-flaged refined dual stable Grothendieck polynomial $\widetilde{g}_{\lambda / \mu}^{~row(\mathbf{1},\Phi)}(\characx;\mathbf{t})$ in \cite[\S2.2]{Kim}.
\begin{definition}
\label{definition:row reading}
The \emph{row reading word} of a reverse plane partition $T$, denoted by $r_T$, is obtained as follows: omit all entries from $T$ which are equal to the entry immediately below it; then read all the remaining entries row-by-row, starting from bottom row, left to right and continuing up the rows.

The \emph{height} $h(T)$ of $T$ is defined as the sequence of positive integers whose $i^{th}$ part (from the left) is the row number of the $i^{th}$ letter (from the left) in $r_T$. Clearly, $h(T)$ is a weakly decreasing sequence.
\end{definition}
\bremark
The reading word $w(T)$ in Definition~\ref{definition:rpp} and row reading word $r_T$ in Definition~\ref{definition:row reading} of a reverse plane partition $T$ are not same. 
\eremark
\begin{example}
The row reading word and height of the reverse plane partition in Example~\ref{example:rpp} are $1323, 2211$ respectively. 
\end{example}
We recall that a semi-standard tableau $Q$ is said to be $(\lambda/\mu , \Phi)$-compatible for RPP if $\exists $ a unique $T' \in  \RPP(\lambda/\mu,\Phi)$ such that $r_{T'}$ is a Yamanouchi word together with $(
\begin{bmatrix}
           h(T) \\
           r_{T'} \\
           
\end{bmatrix} \rightarrow \emptyset)
=(-, Q) 
$, see \S4 in \cite{Sidhu} for more details. Then we define $\overline{\ceq}(Q):=\ceq(T')$.
\begin{theorem}
\begin{upshape}\cite[Theorem 1]{Sidhu}
\end{upshape}
$\RPP(\lambda/\mu,\Phi) $ is a disjoint union of Demazure crystals (up to isomorphism). More precisely,
$$\RPP(\lambda/\mu,\Phi) \cong \displaystyle\bigsqcup_{Q} \mathcal{B}_{\tau} (\widehat{\beta(Q)}^{\dagger}),$$
where $Q$ varies over all $(\lambda/\mu,\Phi)$-compatible tableaux for RPP and $\tau$ is any permutation s.t. $\tau. \widehat{\beta(Q)}^{\dagger}=\widehat{\beta(Q)}$.
\end{theorem}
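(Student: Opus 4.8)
The plan is to follow the same architecture used for the set-valued case in Section~\ref{Section 5}, with reverse plane partitions in place of set-valued tableaux and the \ceq statistic in place of the excess. First I would equip $\RPP_n(\lambda/\mu)$ with a type $A_{n-1}$ crystal structure via the signature rule applied to the circled reading word $w(R)$ of Definition~\ref{definition:rpp}: reading off the $\pm$ signature coming from the $i$'s and $(i{+}1)$'s, cancelling adjacent $(-,+)$ pairs, and modifying the box attached to the rightmost surviving $+$ for $f_i$ (dually for $e_i$), exactly as in the partition-shape theory and the SVT construction of Section~\ref{Section 3}. The first thing to verify is that these operators keep us inside $\RPP_n(\lambda/\mu)$ and, crucially, that they preserve $\ceq(R)$; this is the RPP analogue of the remark that $\ex$ is a crystal invariant for SVT. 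Granting this, $\RPP(\lambda/\mu,\Phi)$ splits into full subcrystals indexed by the value $\mathbf{c}$ of \ceq, so it is enough to realise each $\ceq$-fixed piece as a disjoint union of Demazure crystals.

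Next I would use an uncrowding bijection for reverse plane partitions --- the RPP counterpart of Algorithm~\ref{algorithm}, recorded through the height data $h(T)$ that enters the definition of a $(\lambda/\mu,\Phi)$-compatible tableau for RPP --- to send each $R$ of fixed \ceq to a genuine semi-standard Young tableau $\widetilde R$ of an enlarged skew shape $\widetilde\theta$, together with a recording tableau encoding $\mathbf{c}$. As in Section~\ref{subsection:flag}, the flag $\Phi$ is propagated through the insertion to a flag $\widetilde\Phi$ on $\widetilde\theta$, so that $\widetilde R \in \Tab(\widetilde\theta,\widetilde\Phi)$. Decomposing $\RPP(\lambda/\mu,\Phi)$ over the pairs (enlarged shape, recording tableau) --- the exact analogue of \eqref{eq:Demazure:excess} --- then reduces the problem to a single connected component.

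The key step, and the main obstacle, is to show that this insertion map is a crystal morphism, i.e.\ that it intertwines $e_i$ and $f_i$. I would argue as in the proof of Proposition~\ref{propostion:crystal-morphism}: regard a tableau as the tensor product of its rows read bottom-to-top, use Lemma~\ref{lemma:signature} to locate, through the partial sums $x_j$ where the $\varphi_i$-signature attains its maximum, the row in which $f_i$ acts, and then track how a single elementary uncrowding move shifts these partial sums. The analysis splits according to whether an $i$, an $i{+}1$, or both are bumped between consecutive rows, and in each case one checks that the first maximising index is unchanged, so that $f_i$ commutes with the move. This is exactly where the RPP combinatorics --- entries being omitted when equal to the entry directly below, so that the reading words stay well-behaved under bumping --- must be checked carefully, and I expect it to be the technical heart of the argument.

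Finally I would compose with rectification. The image of a $\ceq$-fixed connected component lies in some $\mathcal{A}(Q,\widetilde\theta,\widetilde\Phi)$, and Proposition~\ref{proposition:tableau} says that rectification is a weight-preserving bijection of $\mathcal{A}(Q,\widetilde\theta,\widetilde\Phi)$ onto the Demazure crystal $\mathcal{B}_\tau(\widehat{\beta(Q)}^\dagger)$ intertwining the crystal operators, where $\tau.\widehat{\beta(Q)}^\dagger=\widehat{\beta(Q)}$. Composing the insertion morphism with rectification identifies each component with such a Demazure crystal, and taking the union over all values of \ceq and all $(\lambda/\mu,\Phi)$-compatible tableaux $Q$ for RPP yields the asserted decomposition.
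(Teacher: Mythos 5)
Your overall strategy (a ceq-preserving crystal structure, a bijection to honest semi-standard tableaux that propagates the flag, then rectification plus Proposition~\ref{proposition:tableau}) is the right shape of argument, but its central step fails for reverse plane partitions. You propose an ``uncrowding bijection for RPPs'' sending $R$ with fixed $\ceq$ to an SSYT $\widetilde R$ of an \emph{enlarged} skew shape, in analogy with Algorithm~\ref{algorithm}. No such map can be a crystal morphism, because of the weight convention of Definition~\ref{definition:rpp}: $\wt(R)_i$ counts the \emph{columns} of $R$ containing $i$, so vertically repeated entries carry no weight. Any insertion that moves those repeated entries into newly created boxes produces a tableau whose (ordinary SSYT) weight strictly exceeds $\wt(R)$ whenever $\ceq(R)\neq\mathbf{0}$, so the map cannot be weight-preserving, hence cannot intertwine the crystal operators. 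The situation is dual to the set-valued case: excess entries of an SVT must be pushed \emph{out} into new boxes (shape grows), whereas the $\ceq$-redundant entries of an RPP must be \emph{deleted} (shape shrinks). Moreover, after deletion the surviving entries do not in general occupy a skew shape --- e.g.\ for the RPP of shape $(2,2)$ with rows $1\,2$ and $1\,3$, removing the repeated $1$ leaves boxes $(1,1),(1,2),(2,2)$, which form no skew shape --- so there is no intermediate $\Tab(\widetilde\theta,\widetilde\Phi)$ on which to invoke Proposition~\ref{proposition:tableau}, and the analogue of the flag-propagation argument of \S\ref{subsection:flag} has nothing to act on.

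This is precisely why the actual proof (the theorem is not proved in this paper; it is imported from \cite{Sidhu}, and the surrounding definitions record its mechanism) bypasses any intermediate skew tableau: deletion of the redundant entries is built into the row reading word $r_T$ of Definition~\ref{definition:row reading}, and one applies the Burge correspondence directly to the biword with top row $h(T)$ and bottom row $r_T$, where the height word $h(T)$ plays the role that $\mathbf{b}(\lambda/\mu)$ plays for flagged skew SSYT in \cite{KRSV}. The flag condition transfers trivially (deleted entries never change rows), and the Demazure structure then comes from the Reiner--Shimozono flagged-word machinery --- $\mathcal{W}(\beta,\Phi)$, Theorem~\ref{theorem:RS-hat}, and \cite{LS-keys} --- applied to these words. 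Two further points you should repair even after fixing the main step: your claim that $\RPP(\lambda/\mu,\Phi)$ ``splits into full subcrystals'' indexed by $\ceq$ is imprecise (the flagged set is not closed under $e_i,f_i$; only the $\ceq$-pieces of the \emph{unflagged} crystal $\RPP_n(\lambda/\mu)$ are full subcrystals), and your final decomposition is indexed by the recording data of your hypothetical insertion, which you never identify with the $(\lambda/\mu,\Phi)$-compatible tableaux for RPP --- defined via the pair $(h(T'),r_{T'})$ with $r_{T'}$ Yamanouchi --- that appear in the statement being proved.
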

Since the operators $e_i$ and $f_i$ preserve the $\ceq$ statistic (see Remark 3, \S2 in \cite{Galashin:LR}), we have the following result.
\begin{corollary}
$g_{\lambda / \mu}(X_{\Phi};\mathbf{t}) = \displaystyle\sum_{\alpha}\mathbf{t}^\alpha \displaystyle\sum_{Q} \key_{\widehat{\beta(Q)}},$
where $\alpha$ is over all $n$-tuple in $\mathbb{Z}^n_{+}$ and $Q$ runs over all $(\lambda/\mu,\Phi)$-compatible tableaux for RPP s.t. $\overline{\ceq}(Q)=\alpha$.     
\end{corollary}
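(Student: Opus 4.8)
The plan is to start from the definition
$$g_{\lambda / \mu}(X_{\Phi};\mathbf{t}) = \sum _{R \in \RPP(\lambda / \mu, \Phi)} \mathbf{t}^{\ceq(R)} \characx^{\wt(R)}$$
and to reorganize this sum over reverse plane partitions according to the value of the $\ceq$ statistic, writing
$$g_{\lambda / \mu}(X_{\Phi};\mathbf{t}) = \sum_{\alpha \in \mathbb{Z}^n_{+}} \mathbf{t}^{\alpha} \sum_{\substack{R \in \RPP(\lambda/\mu,\Phi) \\ \ceq(R) = \alpha}} \characx^{\wt(R)}.$$
The corollary then reduces to identifying the inner character sum, taken over all $R$ of a fixed $\ceq$-value $\alpha$, with $\sum_{Q} \key_{\widehat{\beta(Q)}}$ where $Q$ ranges over the $(\lambda/\mu,\Phi)$-compatible tableaux for RPP with $\ceq(Q)=\alpha$. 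The two inputs I would feed in are the Demazure crystal decomposition of $\RPP(\lambda/\mu,\Phi)$ recorded in the preceding theorem of \cite{Sidhu} and the refined Demazure character formula of Kashiwara stated earlier.

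The observation that makes the $\ceq$-grading compatible with the crystal structure is exactly the fact recalled just before the corollary, namely that $e_i$ and $f_i$ preserve $\ceq$. Consequently $\ceq$ is constant on every connected component of the crystal $\RPP(\lambda/\mu,\Phi)$. Under the decomposition $\RPP(\lambda/\mu,\Phi) \cong \bigsqcup_{Q} \mathcal{B}_{\tau}(\widehat{\beta(Q)}^{\dagger})$, the component indexed by a compatible tableau $Q$ is a single Demazure crystal, and its highest weight element is precisely the Yamanouchi reverse plane partition $T'$ that defines $Q$. Since $\ceq$ is constant on the component and $\ceq(Q)$ is defined to be $\ceq(T')$, the constant value of $\ceq$ on the whole component equals $\ceq(Q)$. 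Thus grouping the reverse plane partitions by $\ceq$-value $\alpha$ is the same as collecting exactly those components whose index $Q$ satisfies $\ceq(Q)=\alpha$.

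It then remains only to evaluate the character of each such component. Because the isomorphism of the preceding theorem is weight-preserving, the character of the component equals the character of $\mathcal{B}_{\tau}(\widehat{\beta(Q)}^{\dagger})$, which by the refined Demazure character formula is $\key_{\tau . \widehat{\beta(Q)}^{\dagger}} = \key_{\widehat{\beta(Q)}}$, using that $\tau$ is chosen so that $\tau . \widehat{\beta(Q)}^{\dagger} = \widehat{\beta(Q)}$. Summing these characters over all components with $\ceq(Q)=\alpha$, and then over $\alpha$, produces the asserted expansion.

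I expect the main obstacle to be the bookkeeping in the middle step rather than any hard computation: one must be certain that the abstract crystal isomorphism transports the component-wise constant $\ceq$-grading on $\RPP(\lambda/\mu,\Phi)$ onto the grading recorded by $\ceq(Q)$, i.e.\ that the constant $\ceq$-value of a component genuinely agrees with the value carried by its indexing tableau. This is where the definition $\ceq(Q):=\ceq(T')$ together with the identification of $T'$ as the (Yamanouchi, hence highest weight) source of its component must be invoked carefully; once that compatibility is pinned down, the refined Demazure character formula supplies the rest mechanically.
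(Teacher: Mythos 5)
Your proposal is correct and takes essentially the same route as the paper: the paper obtains this corollary immediately from the Demazure decomposition of $\RPP(\lambda/\mu,\Phi)$ in \cite{Sidhu}, the fact that $e_i,f_i$ preserve $\ceq$ (so $\ceq$ is constant on connected components), and Kashiwara's refined Demazure character formula, which is exactly the chain of reasoning you lay out. The only difference is one of exposition: the paper leaves these steps implicit in the single sentence preceding the corollary, whereas you make explicit the (correct) identification of the constant $\ceq$-value of each component with $\ceq(Q)$ through the Yamanouchi highest weight element $T'$ defining $Q$.
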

\begin{corollary}
If $\Phi=(n,n,\ldots,n)=n^n$ (say) then $g_{\lambda / \mu}(X_{\Phi};\mathbf{t}) = \displaystyle\sum_{\alpha \in \mathbb{Z}^n_{+}}\mathbf{t}^\alpha \displaystyle\sum_{Q} 
s_{\sh(Q)}(\characx),$ where $Q$ varies over all $(\lambda/\mu,n^n)$-compatible tableaux for RPP s.t. $\overline{\ceq}(Q)=\alpha$. Thus in this case, $g_{\lambda / \mu}(X_{\Phi};\mathbf{t})$ is the refined dual stable Grothendieck polynomial $\widetilde{g}_{\lambda / \mu}(\characx;\mathbf{t})$ in \cite[\S3]{Galashin:Bender-Knuth}, \cite[Remark 3]{Galashin:LR}.
\end{corollary}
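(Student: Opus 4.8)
The plan is to specialize the preceding corollary, which for an arbitrary flag reads $g_{\lambda/\mu}(X_{\Phi};\mathbf{t}) = \sum_{\alpha}\mathbf{t}^{\alpha}\sum_{Q}\key_{\widehat{\beta(Q)}}$, and to show that under the full flag $\Phi=\mathbf{n}$ each key polynomial $\key_{\widehat{\beta(Q)}}$ degenerates to the Schur polynomial $s_{\sh(Q)}(\characx)$. First I would dispose of the identification claimed in the last sentence: unwinding the definition gives $g_{\lambda/\mu}(X_{\mathbf{n}};\mathbf{t}) = \sum_{R\in\RPP(\lambda/\mu,\mathbf{n})}\mathbf{t}^{\ceq(R)}\characx^{\wt(R)}$, and since the full flag $\mathbf{n}=(n,\dots,n)$ imposes no row bound we have $\RPP(\lambda/\mu,\mathbf{n})=\RPP_n(\lambda/\mu)$; this sum is exactly the refined dual stable Grothendieck polynomial $\widetilde{g}_{\lambda/\mu}(\characx;\mathbf{t})$ of Galashin, Grinberg and Liu.

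It remains to prove $\key_{\widehat{\beta(Q)}}=s_{\sh(Q)}$. I would first record that $\widehat{\beta(Q)}^{\dagger}=\sh(Q)$. Indeed, the left key $K_{\_}(Q)$ has the same shape as $Q$, and the weight of any key tableau of partition shape $\nu$ is a rearrangement of $\nu$ (its columns are nested sets, so each value occupies a prefix of the columns); hence $\beta(Q)^{\dagger}=\sh(Q)$, and Theorem~\ref{theorem:RS-hat} gives $\widehat{\beta(Q)}^{\dagger}=\beta(Q)^{\dagger}=\sh(Q)$.

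The substantive step is the degeneration of the key polynomial, for which I would argue crystal-theoretically. When $\Phi=\mathbf{n}$ the truncated operators $e_i,f_i$ on $\RPP(\lambda/\mu,\mathbf{n})$ never leave the set, since $f_i$ only introduces an entry $i+1\le n$, so they coincide with the ambient seminormal crystal operators on $\RPP_n(\lambda/\mu)$. Consequently each connected component is a \emph{full} normal $A_{n-1}$ crystal, whose character is the Schur polynomial $s_{\nu}$ of its highest weight $\nu$. Comparing with the Demazure decomposition $\RPP(\lambda/\mu,\mathbf{n})\cong\bigsqcup_{Q}\mathcal{B}_{\tau}(\widehat{\beta(Q)}^{\dagger})$ of \cite[Theorem 1]{Sidhu}, each Demazure summand is connected and, being now closed under all $e_i$ and $f_i$, must coincide with the full crystal $\Tab_n(\widehat{\beta(Q)}^{\dagger})$. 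Its character is therefore simultaneously $\key_{\widehat{\beta(Q)}}$ (by the refined Demazure character formula underlying the preceding corollary) and $s_{\widehat{\beta(Q)}^{\dagger}}=s_{\sh(Q)}$; equating them gives $\key_{\widehat{\beta(Q)}}=s_{\sh(Q)}$. Because $e_i,f_i$ preserve $\ceq$, the grouping by $\ceq(Q)=\alpha$ is unaffected, and substituting into the preceding corollary yields the stated Schur expansion.

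The hard part will be the middle claim of the last paragraph: that a Demazure summand which happens to be closed under all lowering operators must be the whole crystal. Demazure crystals are a priori only $e_i$-closed, so it is exactly the extra $f_i$-closure forced by the trivial flag that promotes each summand to a complete connected component; I would need to verify that this promotion is compatible with the connectedness and highest-weight bookkeeping in \cite[Theorem 1]{Sidhu}, so that $\widehat{\beta(Q)}$ is indeed the anti-dominant rearrangement of $\sh(Q)$ and no summand is a proper Demazure piece.
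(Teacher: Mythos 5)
Your proposal is correct and follows essentially the same route the paper intends (the paper states this corollary without an explicit proof): specialize the preceding corollary at $\Phi=\mathbf{n}$, note $\RPP(\lambda/\mu,\mathbf{n})=\RPP_n(\lambda/\mu)$ gives the Galashin--Grinberg--Liu polynomial by definition, observe $\widehat{\beta(Q)}^{\dagger}=\beta(Q)^{\dagger}=\sh(Q)$ via Theorem~\ref{theorem:RS-hat} and the fact that a left key of shape $\nu$ has weight rearranging to $\nu$, and then identify $\key_{\widehat{\beta(Q)}}=s_{\sh(Q)}$ because under the full flag each Demazure summand is closed under all ambient $f_i$ and a Demazure crystal containing the highest weight element and closed under all lowering operators must be the full crystal $\mathcal{B}_{w_0}(\sh(Q))$. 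The ``hard part'' you flag is genuinely fine for exactly the reason you sketch, and it is consistent with how the paper uses $s_{\widehat{\beta(R')}^{\dagger}}=s_{\sh(R')}$ in its analogous Corollary~\ref{corollary:schur-SVT}.
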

\bremark
The Hall inner product on the ring of symmetric polynomials is defined by $\langle s_{\lambda}, s_{\mu} \rangle =\delta_{\lambda \mu}$. Grinberg has indicated that $RG_{\lambda}(\characx,\mathbf{t}), \widetilde{g}_{\lambda}(\characx;\mathbf{t})$ are dual with respect to the Hall inner product, i.e., 
$$\langle RG_{\lambda}(\characx,\mathbf{t}), \widetilde{g}_{\lambda}(\characx;\mathbf{t}) \rangle = \delta_{\lambda \mu}$$
see \cite[Remark 3.9]{Melody-Nathan}, \cite[Proposition 2.6]{Travis:Schur}.
\eremark
Let $\mathbb{T}(\lambda/\mu)$ be the set of $ (\lambda/\mu,n^n)$-compatible tableau for RPP and $\wt_{\lambda/\mu}(Q)= \mathbf{t}^{\overline{\ceq}(Q
)},$ for $Q \in \mathbb{T}(\lambda/\mu)$. Then
\begin{equation}
\label{expansion:RPP}
 \widetilde{g}_{\lambda / \mu}(\characx;\mathbf{t})= \displaystyle\sum_{Q \in \mathbb{T}(\lambda/\mu)}\wt_{\lambda/\mu}(Q)s_{\sh(Q)}(\characx),  
\end{equation}
Thus Eq.~\ref{expansion:RPP} gives the tableaux Schur expansion for $\widetilde{g}_{\lambda / \mu}(\characx;\mathbf{t})$.
Therefore using Theorem~\ref{theorem:K bases} we have the following
$$\widetilde{g}_{\lambda / \mu}(\characx;\mathbf{t}) = \displaystyle\sum_{R \in \mathbb{R}(\lambda/\mu)} \wt_{\lambda/\mu}(R)RG_{\sh(R)}(\characx,\mathbf{1})$$
$$ \hspace{2.8cm}= \displaystyle\sum_{S \in \mathbb{S}(\lambda/\mu)} (-1)^{|\ex(S)|} \wt_{\lambda/\mu}(S) g_{\sh(S)}(\characx),$$
where
$$\mathbb{R}(\lambda/\mu) = \{ R : R \text{ is a RPP such that } w(R) \text{ is Knuth equivalent to an element of } \mathbb{T}(\lambda/\mu)  \}$$
$$\mathbb{S}(\lambda/\mu) =\{ S : S \text{ is a set-valued tableau and $w(S)$ is Knuth equivalent to an element of } \mathbb{T}(\lambda/\mu) \}$$
\begin{example}
Let $\lambda=(3,2,0),\mu=(1,0,0)$. Then the set of all reverse plane partitions of shape $\lambda/\mu$ such that their row reading words are Yamanouchi, contains the following three RPPs:
$$ R_1 =\ytableausetup{mathmode,
notabloids}
  \begin{ytableau}
    \none &1&1\\ 1 &2   
  \end{ytableau}
  \hspace{0.4 cm}
 R_2 =\ytableausetup{mathmode,
notabloids}
  \begin{ytableau}
    \none &1&1\\2 &2   
  \end{ytableau}
  \hspace{0.4 cm}
  R_3=\ytableausetup{mathmode,
notabloids}
  \begin{ytableau}
    \none &1 & 1\\1 &1   
  \end{ytableau}$$
It is easy to see that $\ceq(R_1)=(0,0,0), \ceq(R_2)=(0,0,0), \ceq(R_3)=(1,0,0)$.
$$(
\begin{bmatrix}
           h(R_1) \\
           r_{R_1} \\
           
\end{bmatrix} \rightarrow \emptyset)
=(\ytableausetup{mathmode,
notabloids}
  \begin{ytableau}
    1&1&1\\2   
  \end{ytableau},\ytableausetup{mathmode,
notabloids}
  \begin{ytableau}
    1&1&2\\2   
  \end{ytableau} ) 
$$
$$(
\begin{bmatrix}
           h(R_2) \\
           r_{R_2} \\
           
\end{bmatrix} \rightarrow \emptyset)
=(\ytableausetup{mathmode,
notabloids}
  \begin{ytableau}
    1&1\\2&2   
  \end{ytableau},\ytableausetup{mathmode,
notabloids}
  \begin{ytableau}
    1&1\\2 &2
  \end{ytableau} ) 
$$
$$(
\begin{bmatrix}
           h(R_3) \\
           r_{R_3} \\
           
\end{bmatrix} \rightarrow \emptyset)
=(\ytableausetup{mathmode,
notabloids}
\begin{ytableau}
    1&1&1   
\end{ytableau},\ytableausetup{mathmode,
notabloids}
\begin{ytableau}
    1&2&2  
\end{ytableau} ) 
$$
\vspace{0.2cm}

Thus $\mathbb{T}(\lambda/\mu)=\{
\ytableausetup{mathmode,
notabloids}
  $\begin{ytableau}
    1&1&2\\2   
  \end{ytableau}$,
  \hspace{0.1cm}
 $\begin{ytableau}
   1&1\\2&2   
  \end{ytableau}$,
  \hspace{0.1cm}
  $\begin{ytableau}
   1&2&2   
  \end{ytableau}$
\}$. 
By assigning labels $T_1, T_2, T_3$ (corresponds to $R_1,R_2,R_3$ respectively) to the tableaux of $\mathbb{T}(\lambda/\mu) $, we obtain the following
$$ \wt_{\lambda/\mu}(T_1)=1,\wt_{\lambda/\mu}(T_2)=1,\wt_{\lambda/\mu}(T_3)=t_1.$$
\vspace{0.1cm}
$\{
R \in \mathbb{R}(\lambda/\mu) | \rect(w(R))=T_1
\}=
\{
\begin{ytableau}
   1&1&2\\2   
\end{ytableau},
\begin{ytableau}
   1&1&2\\1&2   
  \end{ytableau},
\begin{ytableau}
   1&1&2\\1&2&2   
  \end{ytableau},\dots
 \}$
 
\vspace{0.2cm}
$\{
R \in \mathbb{R}(\lambda/\mu) | \rect(w(R))=T_2
\}
=\{
\begin{ytableau}
   1&1\\2&2   
  \end{ytableau},
\begin{ytableau}
   1&1\\1&2\\2   
  \end{ytableau},
\begin{ytableau}
   1&1\\2&2\\2   
\end{ytableau},\dots  
\}$

\vspace{0.2cm}
$\{
R \in \mathbb{R}(\lambda/\mu) | \rect(w(R))=T_3
\}
=\{
\begin{ytableau}
   1&2&2   
\end{ytableau},  
\begin{ytableau}
   1&2&2\\1   
\end{ytableau},  
\begin{ytableau}
   1&2&2\\1&2   
\end{ytableau},\dots  
\}
$
\vspace{0.3 cm}

Thus
$\widetilde{g}_{\lambda / \mu}(\characx;\mathbf{t})=(RG_{(3,1,0)}(\characx,\mathbf{1})+RG_{(3,2,0)}(\characx,\mathbf{1})+RG_{(3,3,0)}(\characx,\mathbf{1})+\cdots) + (RG_{(2,2,0)}(\characx,\mathbf{1})+2RG_{(2,2,1)}(\characx,\mathbf{1})+\cdots)+t_1(RG_{(3,0,0)}(\characx,\mathbf{1})+RG_{(3,1,0)}(\characx,\mathbf{1})+RG_{(3,2,0)}(\characx,\mathbf{1})+\cdots)$.

\vspace{0.3 cm}
$\{
S \in \mathbb{S}(\lambda/\mu) | \rect(w(S))=T_1
\} = \{\begin{ytableau}
    1&1&2\\2   
  \end{ytableau},
  \begin{ytableau}
    1& {1,2} &2   
  \end{ytableau}
  \}$

\vspace{0.2cm}
$\{
S \in \mathbb{S}(\lambda/\mu) | \rect(w(S))=T_2
\}
=\{
\begin{ytableau}
   1&1\\2&2   
\end{ytableau},
\begin{ytableau}
   1&1,2\\2   
\end{ytableau}
\}$

\vspace{0.2cm}
$\{
S \in \mathbb{S}(\lambda/\mu) | \rect(w(S))=T_3
\}
=\{\begin{ytableau}
   1&2&2   
\end{ytableau}
\}$

Therefore, 
$\widetilde{g}_{\lambda / \mu}(\characx;\mathbf{t}) = 
g_{(3,1,0)}(\characx)-g_{(3,0,0)}(\characx)+g_{(2,2,0)}(\characx)-g_{(2,1,0)}(\characx)+t_1g_{(3,0,0)}(\characx)$.
\end{example}
\subsection{The row-refined skew stable Grothendieck polynomial} Now we recall the row-refined skew stable Grothendieck polynomial is defined as  
$$RG_{\lambda/\mu}(\characx; \mathbf{t}):= \displaystyle \sum _{T \in \SVT_n(\lambda/\mu)} (-1)^{|\ex(T)|}\mathbf{t}^{\ex(T)}\characx^{\wt(T)} $$
For $\alpha \in \mathbb{Z}^n_{+},$ $\mathbb{T}(\lambda/\mu, \alpha)$ is the set of all $(\lambda/\mu,n^n)$-compatible tableaux for SVT with $\overline{\ex}(P)=\alpha$ $ \forall P \in \mathbb{T}(\lambda/\mu, \alpha)$. We also define $\wt_{\lambda/\mu, \alpha}(P):=(-1)^{|\alpha|}\mathbf{t}^{\alpha}$ $ \forall P \in \mathbb{T}(\lambda/\mu, \alpha)$. Then Corollary~\ref{corollary:schur-SVT} gives the following expansion
\begin{equation}
\label{set value:eq}
RG_{\lambda/\mu}(\characx; \mathbf{t}) = \displaystyle\sum_{\alpha \in \mathbb{Z}^n_{+}}\displaystyle\sum_{P\in \mathbb{T}(\lambda/\mu, \alpha)}\wt_{\lambda/\mu, \alpha}(P)s_{\sh(P)}(\characx)
\end{equation}

Thus \eqref{set value:eq} gives a tableau Schur expansion of $RG_{\lambda/\mu}(\characx; \mathbf{t})$. So by Theorem~\ref{theorem:K bases} we have the following.
$$RG_{\lambda/\mu}(\characx; \mathbf{t})= \displaystyle\sum_{\alpha \in \mathbb{Z}^n_{+}} \displaystyle \sum_{R \in \mathbb{R}(\lambda/\mu , \alpha)} \wt_{\lambda/\mu, \alpha}(R)RG_{\sh(R)}(\characx,\mathbf{1})$$
$$\hspace{2.9 cm}=\displaystyle\sum_{\alpha \in \mathbb{Z}^n_{+}}\displaystyle \sum_{S \in \mathbb{S}(\lambda/\mu, \alpha)} (-1)^{|\ex(S)|} \wt_{\lambda/\mu, \alpha}(S)g_{\sh(S)}(\characx)$$
where $\mathbb{R}(\lambda/\mu, \alpha)$ is the set of all reverse plane partitions whose reading word is Knuth equivalent to an element of $\mathbb{T}(\lambda/\mu, \alpha)$ and $\mathbb{S}(\lambda/\mu, \alpha)$ is the set of all set-valued tableaux whose reading word is Knuth equivalent to an element of $\mathbb{T}(\lambda/\mu, \alpha)$.
\begin{example}
Let $\lambda=(2,2), \mu=(1)$. Then the highest weight elements of $\SVT_2(\lambda/\mu)$ are the following:
$$
P=\ytableausetup{mathmode,
notabloids}
\begin{ytableau}
    \none &1 \\ 1 &2   
\end{ytableau}
  \hspace{1 cm}
Q=\ytableausetup{mathmode,
notabloids}
\begin{ytableau}
    \none &1\\1,2 &2   
\end{ytableau}$$
Then
$$ \Tilde{P}=\ytableausetup{mathmode,
notabloids}
\begin{ytableau}
    \none & \none &1 \\ 1 &2   
\end{ytableau}
  \hspace{1 cm}
\Tilde{Q} =\ytableausetup{mathmode,
notabloids}
\begin{ytableau}
    \none & \none &1\\1 &2 \\ 2   
\end{ytableau}$$
Now
$$(
\begin{bmatrix}
           \mathbf{b}(\sigma_{\lambda/\mu}^{(0,0)}) \\
           r_{\Tilde{P}} \\
           
\end{bmatrix} \rightarrow \emptyset)
=(\ytableausetup{mathmode,
notabloids}
  \begin{ytableau}
    1&1\\2   
  \end{ytableau},\ytableausetup{mathmode,
notabloids}
  \begin{ytableau}
    1&2\\2   
  \end{ytableau} ); 
\hspace{1 cm}
(
\begin{bmatrix}
           \mathbf{b}(\sigma_{\lambda/\mu}^{(0,1)}) \\
           r_{\Tilde{Q}} \\
\end{bmatrix} \rightarrow \emptyset)
=(\ytableausetup{mathmode,
notabloids}
  \begin{ytableau}
    1&1\\2&2   
  \end{ytableau},\ytableausetup{mathmode,
notabloids}
  \begin{ytableau}
    1&2\\2 &3
  \end{ytableau} ) 
$$
Let $\alpha=(0,0), \beta=(0,1)$. Then $\mathbb{T}(\lambda/\mu, \alpha)=\{
\ytableausetup{mathmode,
notabloids}
  \begin{ytableau}
    1&2\\2   
  \end{ytableau}
\}$;
$\mathbb{T}(\lambda/\mu, \beta)=\{
\ytableausetup{mathmode,
notabloids}
 $\begin{ytableau}
   1&2\\2&3   
  \end{ytableau}$
\}$.
So we obtain the following
$$ \wt_{\lambda/\mu, \alpha}(\ytableausetup{mathmode,
notabloids}
 \begin{ytableau}
   1&2\\2   
  \end{ytableau}
)=1,\wt_{\lambda/\mu,\beta}(\ytableausetup{mathmode,
notabloids}
 \begin{ytableau}
   1&2\\2 &3   
  \end{ytableau})=-t_2.$$
  
$ \mathbb{R}(\lambda/\mu, \alpha) =
\{
\begin{ytableau}
   1&2\\2   
\end{ytableau},
\begin{ytableau}
   1&2\\1 \\2   
\end{ytableau},
\begin{ytableau}
   1&2\\2&2   
\end{ytableau},\dots
 \}$

\vspace{0.3 cm}
$\mathbb{R}(\lambda/\mu, \beta)
=\{
\begin{ytableau}
   1&2\\2&3   
  \end{ytableau},
\begin{ytableau}
   1&2\\1&3\\2   
  \end{ytableau},
\begin{ytableau}
   1&2\\1&2\\2 &3  
\end{ytableau},\dots  
\}$

\vspace{0.3 cm}
Thus $RG_{\lambda/\mu}(\characx;\mathbf{t})=(RG_{(2,1)}(\characx;\mathbf{1}) + RG_{(2,1,1)}(\characx;\mathbf{1}) + RG_{(2,2)}(\characx;\mathbf{1}) + \cdots ) -t_2 (RG_{(2,2)}(\characx;\mathbf{1}) + RG_{(2,2,1)}(\characx;\mathbf{1}) + RG_{(2,2,2)}(\characx;\mathbf{1}) +\cdots )$.

\vspace{0.3 cm}
$ \mathbb{S}(\lambda/\mu, \alpha) = \{\begin{ytableau}
    1&2\\2   
\end{ytableau},
\begin{ytableau}
     1,2 &2   
\end{ytableau}
  \}$

\vspace{0.2cm}
$ \mathbb{S}(\lambda/\mu, \beta)=\{
\begin{ytableau}
   1&2\\2&3   
\end{ytableau},
\begin{ytableau}
   1&2,3\\2   
\end{ytableau}
\}$

\vspace{0.2cm}
\noindent Therefore, 
$RG_{\lambda / \mu}(\characx;\mathbf{t}) =
g_{(2,1)}(\characx)-g_{(2,0)}(\characx)- t_2 g_{(2,2)}(\characx) + t_2 g_{(2,1)}(\characx)$.
\end{example}
\subsection{Schur P-functions}
A partition $\lambda=(\lambda_1,\lambda_2,\dots,\lambda_l)$ is said to be \textit{strict} if $\lambda_1 > \lambda_2 > \cdots > \lambda_l >0.$
For a strict partition $\lambda,$ the \textit{shifted shape} $S(\lambda)$ of $\lambda$ is the array of boxes obtained by placing $\lambda_i$ boxes in the $i^{th}$ row, with each row shifted $i-1$ positions to the right with respect to the top row.

A word $u=u_1u_2 \cdots u_n$ in the alphabet $N=\{1 < 2 < \cdots \}$ is a \emph{hook word} if there exists a positive integer $1 \leq k \leq n$ such that $$ u_1>u_2>\cdots >u_k \leq u_{k+1} \leq \cdots \leq u_n .$$ 
\begin{definition}
A \textit{semi-standard decomposition tableau} (SSDT) \cite[Definition 2.14]{Serrano} is a filling $T$ of the shifted shape associated to the strict partition $\lambda=(\lambda_1,\dots,\lambda_l)$ with entries from $N$ such that  
\begin{itemize}
    \item the word $v_i$ formed by reading the $i^{th}$ row from left to right is a hook word of length $\lambda_i$, and 
    \item $v_i$ is a hook subword of maximum length in the concatenation $v_lv_{l-1}\cdots v_i$ for all $1 \leq i \leq l-1$.
\end{itemize}
The \emph{reading word} of $T$, denoted by $\Read(T),$ is defined by $v_lv_{l-1}\cdots v_1$ and the \emph{weight} $\wt(T)$ of $T$ is the weight of $\Read(T)$.   
\end{definition}
\begin{example}
It is easy to check that 
$$ T=
\begin{ytableau}
     3 &2 &1 &2\\
     \none & 2 & 1\\ 
     \none & \none & 2  
\end{ytableau}
$$ is a SSDT of shifted shape $(4,2,1)$ such that $\Read(T)=2213212$ and $\wt(T)=(2,4,1)$.   
\end{example}
\begin{definition}
For a strict partition $\lambda,$ the \textit{Schur P-function} $P_{\lambda}(x_1,x_2,\dots,x_n)$ is defined by $$P_{\lambda}(x_1,x_2,\dots,x_n):=\displaystyle\sum_{T \in \SSDT_n(\lambda)}\characx^{\wt(T)},$$
where $\SSDT_n(\lambda)$ be the set of all semi-standard decomposition tableaux of shifted shape $\lambda$ with entries at most $n$.
\end{definition}
\bremark
\label{def:crystal-SSDT}
For a strict partition $\lambda,$ we have the following embedding
$$\Read:\SSDT_n(\lambda) \rightarrow \mathbb{W}_n^{\otimes |\lambda|}, T \mapsto \Read(T),$$
where $\mathbb{W}_n$ is the standard type $A_{n-1}$ crystal in Example~\ref{ex:standard crystal}.

Using this embedding, we identify $\SSDT_n(\lambda)$ with a subset of $\mathbb{W}_n^{\otimes |\lambda|}$ and define the action of $e_i,f_i, \wt, \epsilon_i, \varphi$ on the elements of $\SSDT_n(\lambda)$. Then the set $\SSDT_n(\lambda)$ is a type $A_{n-1}$ crystal under these operators, see Remark~2.6 in \cite{Kashiwara:SSDT}.
\eremark
Every hook word is the row reading word of a unique semi-standard Young tableau whose shape is a hook partition. For example, the hook word $\textcolor{blue}{4}\textcolor{blue}{3}\textcolor{blue}{2}23$ is the row reading word of the semi-standard Young tableau below
$$\ytableausetup{mathmode,
notabloids}
\begin{ytableau}
     \textcolor{blue}{2} & 2 & 3 \\ \textcolor{blue}{3} \\ \textcolor{blue}{4}   
\end{ytableau}
$$
Let $l=l(\lambda).$ Then each SSDT $T$ of shifted shape $\lambda$ corresponds to a unique semi-standard tableau $\Tilde{T}=\Tilde{T_l} * \cdots *\Tilde{T_2} *\Tilde{T_1},$ where each $\Tilde{T_i}$ is the semi-standard Young tableau of hook partition shape such that $\Read(T_i)=r_{\Tilde{T_i}},$ where $T_i$ is the $i^{th}$ row of $T$.
\begin{example}
    \label{Example:SSDT}
Consider $ Q = \ytableausetup{mathmode,
notabloids}
\begin{ytableau}
    3 & 2 &1 & 1 \\ \none & 2 &1  
\end{ytableau}
 \in \SSDT_3(4,2)$. Then 
$$\Tilde{Q_1}= \ytableausetup{mathmode,
notabloids}
\begin{ytableau}
    1 &1\\2 \\3 
\end{ytableau} \hspace{1 cm} 
\Tilde{Q_2}= \ytableausetup{mathmode,
notabloids}
\begin{ytableau}
    1 \\ 2  
\end{ytableau} \implies \Tilde{Q}=\Tilde{Q_2} * \Tilde{Q_1}=
\begin{ytableau}
    \none & 1 & 1 \\ \none & 2 \\ \none & 3\\ 1 \\2  
\end{ytableau}
$$
\end{example}
Let $Q$ be a highest weight element in $\SSDT_n(\lambda)$ and $\Tilde{Q}=\Tilde{Q_l} * \cdots * \Tilde{Q_2} *\Tilde{Q_1}$. Now if $(
\begin{bmatrix}
      \mathbf{b}(\sh(\Tilde{Q})) \\
       r_{\Tilde{Q}} \\
\end{bmatrix}
\rightarrow \emptyset)=(\rect (\Tilde{Q}), \hat{Q})$ (Theorem~\ref{Theorem:Burge}), we call $\hat{Q}$ is a $\lambda$-compatible tableau for SSDT.
\begin{example}
We take the semi-standard decomposition tableau $Q \in \SSDT_3(4,2),$ which is also a highest weight element. Then
$$(
\begin{bmatrix}
           \mathbf{b}(\sh(\Tilde{Q})) \\
           r_{\Tilde{Q}} \\
\end{bmatrix} \rightarrow \emptyset)
=(\ytableausetup{mathmode,
notabloids}
  \begin{ytableau}
    1&1&1\\2 &2 \\3  
  \end{ytableau},\ytableausetup{mathmode,
notabloids}
  \begin{ytableau}
    1&1&4\\2 &5\\3
  \end{ytableau} ).$$
So $\hat{Q} = \ytableausetup{mathmode,
notabloids}
\begin{ytableau}
    1&1&4\\2 &5\\3
\end{ytableau}$ is a $(4,2)$-compatible tableau for SSDT.
\end{example}
Let $Q$ be any highest weight element of $\SSDT_n(\lambda)$ and $ \SSDT_n(\lambda;Q)$ be the connected component of the crystal graph of $\SSDT_n(\lambda)$ containing $Q$.
Also, let $n^n=(n,n,\dots,n) \in \mathbb{Z}^{n}_+$. Now we define the following map 
$$\Gamma :\SSDT_n(\lambda;Q) \rightarrow \mathcal{A}(\hat{Q}, \sh(\Tilde{Q}),n^n) \text{ by }
S \mapsto \Tilde{S} =\Tilde{S_l} * \cdots * \Tilde{S_2}*\Tilde{S_1}.$$
\begin{proposition}
\label{Proposition:main-SSDT}
The map $\Gamma : \SSDT_n(\lambda;Q)  \rightarrow \mathcal{A}(\hat{Q}, \sh(\hat{Q}),n^n)$ defined by $ \Gamma(S)=\Tilde{S}$ is a weight-preserving bijection which intertwines the crystal raising and lowering operators.
\end{proposition}
\begin{proof}
First we check that $\Gamma$ commutes with $e_i,f_i$ for each $i$. Let $h_i$ be either $e_i$ or $f_i$. We have to show that $\Gamma(h_i.T)=h_i\Gamma(T)$, i.e., $\widetilde{h_i.T}=h_i.\Tilde{T}$ which is equivalent to showing $ r_{\widetilde{h_i.T}}=r_{h_i.\Tilde{T}}$.
Now for a skew shape $\lambda/\mu (\lambda,\mu \in \mathcal{P}[n
]),$ the crystal structure on $\Tab_n(\lambda/\mu)$ is given by the following embedding into $\mathbb{W}_n^{\otimes|\lambda|-|\mu| }$:
$$ T \mapsto r_T=v_1v_2\cdots v_{|\lambda|-|\mu|} \mapsto v_1 \otimes v_2 \otimes \cdots \otimes v_{|\lambda|-|\mu|}.$$
Thus $r_{h_i.\Tilde{T}}=h_i.r_{\Tilde{T}} =h_i.\Read(T)$ (since $\Read(T)=r_{\Tilde{T}}$) $=\Read(h_i.T)$ (using \ref{def:crystal-SSDT}) $=r_{\widetilde{h_i.T}}$. Now, for any $T \in \SSDT_n(\lambda;Q),T=f_{i_1}^{k_1}f_{i_2}^{k_2}\cdots f_{i_t}^{k_t}(Q)$. Thus, using \cite[Proposition 29]{RS}, $\Gamma(T)=\Tilde{T}=f_{i_1}^{k_1}f_{i_2}^{k_2}\cdots f_{i_t}^{k_t}(\Tilde{Q}) \in \mathcal{A}(\hat{Q}, \sh(\hat{Q}),n^n)$. Hence the map $\Gamma$ is well-defined. It is clear that $\Gamma$ is weight preserving. Let $T,T' \in \SSDT_n(\lambda;Q)$ such that $\Tilde{T}=\Tilde{T'}$. Then $r_{\Tilde{T}}=r_{\Tilde{T'}} \implies \Read(T)=\Read(T') \implies T=T'$. Thus $\Gamma$ is injective. Let $\Tilde{R}=\Tilde{R_l} *\cdots *\Tilde{R_1} \in  \mathcal{A}(\hat{Q}, \sh(\hat{Q}),n^n)$. Then $\Tilde{R}=f_{t_1}^{s_1}\cdots f_{t_k}^{s_k}(\Tilde{Q})$. Since $\Read(Q)=r_{\Tilde{Q}}, R=f_{t_1}^{s_1} \cdots f_{t_k}^{s_k}(Q) \in \SSDT_n(\lambda;Q)$ and $\Gamma(R)=\Tilde{R}$. So $\Gamma$ is surjective.
\end{proof}
Let $\mathcal{T}(\lambda)$ indicate the set of all $\lambda$-compatible tableaux for SSDT.
Then the above proposition and Proposition~\ref{proposition:tableau} provides the following tableaux Schur expansion of Schur P-function 
\begin{equation}
 \label{eq:SSDT}
P_{\lambda}(x_1,x_2.\dots,x_n)=\displaystyle\sum_{Q \in \mathcal{T}(\lambda)}s_{\sh(Q)}
\end{equation}
Therefore, using Theorem~\ref{theorem:K bases}, we obtain the following: 
$$ 
P_{\lambda}(x_1,x_2,\dots,x_n)=\displaystyle\sum_{R\in \mathcal{R}(\lambda)}RG_{\sh(R)}(\characx,\mathbf{1})
$$
$$
\hspace{3.85 cm}=\displaystyle\sum_{S\in \mathcal{S}(\lambda)}(-1)^{|\ex(S)|}g_{\sh(S)}(\characx)
$$
Here $\mathcal{R}(\lambda)$ (resp. $\mathcal{S}(\lambda)$) denotes the set of all reverse plane partitions (resp. set-valued tableaux) whose reading word is Knuth equivalent to an element of $\mathcal{T}(\lambda)$.
\begin{example}
The only highest weight elements of $\SSDT_3(3,1)$ are the following:
$$ 
U=
\begin{ytableau}
    2 & 1 &1 \\ \none & 1
\end{ytableau}
\hspace{0.5 cm}
V= 
\begin{ytableau}
    2 & 1 &1 \\ \none & 2
\end{ytableau}
\hspace{0.5 cm}
W=
\begin{ytableau}
    3 & 2 & 1 \\ \none & 1
\end{ytableau}
$$
Then, 
$$ 
\Tilde{U}=
\begin{ytableau}
    \none & 1 &1 \\ \none & 2 \\  1
\end{ytableau} \implies
(
\begin{bmatrix}
           \mathbf{b}(\sh(\Tilde{U})) \\
           r_{\Tilde{U}} \\
\end{bmatrix} \rightarrow \emptyset)
=(\ytableausetup{mathmode,
notabloids}
  \begin{ytableau}
    1&1&1\\2   
  \end{ytableau},\ytableausetup{mathmode,
notabloids}
  \begin{ytableau}
    1&1&3\\2 
  \end{ytableau} ).
$$
$$
\Tilde{V}= 
\begin{ytableau}
   \none & 1 &1 \\ \none & 2 \\ 2
\end{ytableau} \implies (
\begin{bmatrix}
           \mathbf{b}(\sh(\Tilde{V})) \\
           r_{\Tilde{V}} \\
\end{bmatrix} \rightarrow \emptyset)
=(\ytableausetup{mathmode,
notabloids}
  \begin{ytableau}
    1&1\\2 &2  
  \end{ytableau},\ytableausetup{mathmode,
notabloids}
  \begin{ytableau}
    1&1\\2 & 3
  \end{ytableau} ).
$$
$$
\Tilde{W}=
\begin{ytableau}
    \none & 1 \\ \none & 2 \\ \none & 3 \\  1
\end{ytableau} \implies 
(
\begin{bmatrix}
           \mathbf{b}(\sh(\Tilde{W})) \\
           r_{\Tilde{W}} \\
\end{bmatrix} \rightarrow \emptyset)
=(\ytableausetup{mathmode,
notabloids}
  \begin{ytableau}
    1&1\\2 \\3  
  \end{ytableau},\ytableausetup{mathmode,
notabloids}
  \begin{ytableau}
    1&4\\2 \\3
  \end{ytableau} ).$$
Thus, $\mathcal{T}(\lambda)=\{\begin{ytableau}
    1&1&3\\2 
  \end{ytableau},  \begin{ytableau}
    1&1\\2 & 3
  \end{ytableau},  \begin{ytableau}
    1&4\\2 \\3
  \end{ytableau} \}.$ 
  
\vspace{0.1cm}
$\{
R \in \mathcal{R}(\lambda) | \rect(w(R))=\begin{ytableau}
    1&1&3\\2 
  \end{ytableau}
\}=
\{
\begin{ytableau}
   1&1&3\\2   
\end{ytableau},
\begin{ytableau}
   1&1&3\\1\\2   
  \end{ytableau},
\begin{ytableau}
   1&1&3\\2\\2   
  \end{ytableau},\dots
\}$
 
\vspace{0.2cm}
$\{
R \in \mathcal{R}(\lambda) | \rect(w(R))=\begin{ytableau}
    1&1\\2 & 3 \end{ytableau}
\}
=\{
\begin{ytableau}
   1&1\\2&3   
  \end{ytableau},
\begin{ytableau}
   1&1\\2&3\\2   
  \end{ytableau},
\begin{ytableau}
   1&1\\1&3\\2   
\end{ytableau},\dots  
\}$

\vspace{0.2cm}
$\{
R \in \mathcal{R}(\lambda) | \rect(w(R))=\begin{ytableau}
    1&4\\2 \\3
  \end{ytableau}
\}
=\{
\begin{ytableau}
    1&4\\2 \\3
\end{ytableau},  
\begin{ytableau}
   1&4\\1 \\2\\3   
\end{ytableau},  
\begin{ytableau}
   1&4\\2\\2\\3   
\end{ytableau},
\begin{ytableau}
   1&4\\2 \\3\\3   
\end{ytableau},\dots  
\}
$
\vspace{0.3 cm}

Thus
$P_{\lambda}(\characx)=(RG_{(3,1)}(\characx,\mathbf{1})+2RG_{(3,1,1)}(\characx,\mathbf{1})+\cdots) + (RG_{(2,2)}(\characx,\mathbf{1})+2RG_{(2,2,1)}(\characx,\mathbf{1})+\cdots)+(RG_{(2,1,1)}(\characx,\mathbf{1})+3RG_{(2,1,1,1)}(\characx,\mathbf{1})+\cdots)$.

\vspace{0.3 cm}
$\{
S \in \mathcal{S}(\lambda) | \rect(w(S))=\begin{ytableau}
    1&1&3\\2 
  \end{ytableau}
\} = \{\begin{ytableau}
    1&1&3\\2   
  \end{ytableau},
  \begin{ytableau}
    1& {1,2} &3   
  \end{ytableau}
  \}$

\vspace{0.2cm}
$\{
S \in \mathcal{S}(\lambda) | \rect(w(S))=\begin{ytableau}
    1&1\\2 & 3 \end{ytableau}
\}
=\{
\begin{ytableau}
   1&1\\2 &3   
\end{ytableau},
\begin{ytableau}
   1&1,3\\2   
\end{ytableau}
\}$

\vspace{0.2cm}
$\{
S \in \mathcal{S}(\lambda) | \rect(w(S))=\begin{ytableau}
    1&4\\2 \\3
  \end{ytableau}
\}
=\{\begin{ytableau}
  1&4\\2 \\3  
\end{ytableau},
\begin{ytableau}
  1&4\\2,3  
\end{ytableau},
\begin{ytableau}
  1,2 &4\\3  
\end{ytableau},
\ytableausetup{mathmode,
notabloids, boxsize=0.9cm}
\begin{ytableau}
  1,2,3&4  
\end{ytableau}
\}$

Therefore, 
$P_{\lambda}(\characx) = 
g_{(3,1)}(\characx)-g_{(3,0)}(\characx)+g_{(2,2)}(\characx)-g_{(2,1)}(\characx)+g_{(2,1,1)}(\characx)-2g_{(2,1)}(\characx)+g_{(2,0)}(\characx)$.
\end{example}
\section*{Acknowledgements}
This work was initiated during the author's PhD at IMSc Chennai and was completed while the author was working as a Post-doctoral Fellow at TIFR Mumbai. The work is subsequently revised during the author’s tenure at NISER, Bhubaneswar.

\end{document}